\def\longrightharpoonup{\relbar\joinrel\rightharpoonup}
\def\longleftharpoondown{\leftharpoondown\joinrel\relbar}
\def\longrightleftharpoons{
  \mathop{
    \vcenter{
       \hbox{
       \ooalign{
          \raise1pt\hbox{$\longrightharpoonup\joinrel$}\crcr
  	  \lower1pt\hbox{$\longleftharpoondown\joinrel$}
	}
      }
    }
  }
}
\newtheorem{remark}{Remark}
\newtheorem{assump}{Assumption}
\newtheorem{prop}{Proposition}
\newcommand\blfootnote[1]{%
  \begingroup
  \renewcommand\thefootnote{}\footnote{#1}%
  \addtocounter{footnote}{-1}%
  \endgroup
}
\begin{document}
\title{Statistical analysis of the first passage path ensemble of jump processes}
\date{}
\author{
  Max von Kleist\,$^1$ \and
Christof Sch\"utte\,$^{1,\,2}$ \and
  Wei Zhang\,$^1$ }
\blfootnote{$^1$\,Institute of Mathematics, Freie Universit\"{a}t Berlin, Arnimallee 6, 14195 Berlin, Germany}
\blfootnote{$^2$\,Zuse Institute Berlin, Takustrasse 7, 14195 Berlin, Germany}
\blfootnote{Email : vkleist@zedat.fu-berlin.de, schuette@zib.de, wei.zhang@fu-berlin.de}

\maketitle

\begin{abstract}
The transition mechanism of jump processes between two different subsets in state space 
reveals important dynamical information of the processes and therefore has attracted
considerable attention in the past years. In this paper, we study the first passage path ensemble 
of both discrete-time and continuous-time jump processes on a finite state space.
The main approach is to divide each first passage path into nonreactive and
reactive segments and to study them separately. The analysis can be applied to jump
processes which are non-ergodic, as well as continuous-time jump processes where the waiting time
distributions are non-exponential. In the particular case that the jump
processes are both Markovian and ergodic, our analysis elucidates the
relations between the study of the \textit{first passage paths} and the
study of the \textit{transition paths} in transition path theory. We provide algorithms to numerically compute statistics
  of the first passage path ensemble. The computational complexity of these algorithms
  scales with the complexity of solving a linear system, for which efficient methods are available. Several examples
  demonstrate the wide applicability of the derived results across research areas.
\end{abstract}
\begin{keywords}
jump process, non-ergodic process, non-exponential distribution, first passage path, transition path theory
\end{keywords}

\section{Introduction}
\label{sec-intro}
(Markov) jump process has been extensively studied in the past decades
and nowadays it becomes a standard mathematical model that is widely applied to problems arising
from physics, chemistry, biology, etc~\cite{durrett_prob_example,norris-mc}. Among many important
topics in the study of jump processes, the \textit{first passage paths} have attracted
considerable attention within different disciplines, where the main purpose
is to understand the transition mechanism of the system between
different subsets in state space and to e.e. access how much time the transitions typically take~\cite{metzler2014first}. Examples include reaction networks in chemistry \cite{fptp_chemical,Kleist2012}, phenotypic switches in cell
biology~\cite{Yousef2015}, conformational changes in molecular dynamics~\cite{milestoning_network2013, reactionPath_mfpt}, disease spreading within certain geographical
areas~\cite{Balcan2009,first-passage-time-nature}, as well as spread of information
on social networks~\cite{prl_fpt_heterogeneous}. In these contexts, studies of the first passage
paths are often very helpful to understand the underlying processes and to
foster- or prevent the transition events. 

A common situation that one often encounters in the study of many real-world
applications is that the system exhibits metastability
and transition events become very rare~\cite{msm_generation, huisinga2004,schuette2013metastability}.
To study the transition events in these scenarios, transition path theory
(TPT) has been developed both for diffusion processes on continuous state
space~\cite{tpt2010,towards_tpt2006,tpt_eric2006} and for Markov jump
processes on discrete state space~\cite{tpt_jump, Cameron2014}.
It provides a probabilistic framework to analyze the statistical properties of system's reactive trajectories 
(following the terminology in \cite{Lu2014}) and enables us to answer the
questions which are important in order to understand the transition mechanism
of the system. In the discrete state space setting, TPT can be applied to
study Markov jump processes which are ergodic with a unique invariant measure~\cite{tpt_jump, Vanden-Eijnden2014}.

Meanwhile, inspired by the wide applications that have emerged due to the rapid development of network science,
scientific interest has been extended to study processes which go beyond
ergodic Markov jump processes~\cite{Poletto2013,Scholtes2014,Rosvall2014}. 
One simple example when ergodicity is violated is the random walk on a directed graph which contains sink-states (states with no outward
edges). Another example of a non-Markovian process is the continuous-time random walk with possibly
 non-exponential waiting time distributions. These kind of processes have been applied to model the burst and memory effects in real-world networks~\cite{non_poisson_jump,path_mem_carse_graining,burst_mem}.
The above described processes are no longer ergodic Markov jump processes and therefore TPT can not be applied directly. However, interesting insights can be obtained when
studying the transition mechanism from one subset to another, to determine how much time the transitions typically take and to
identify key nodes or edges for the transition events.
To our best knowledge, these questions have not been systematically addressed
in the non-Markovian and non-ergodic setting (see related study in \cite{path_mem_carse_graining}).

In the current work we consider the first passage path
ensemble of both continuous-time and discrete-time jump processes on a
discrete, finite state space.
While we are strongly influenced by TPT and will study similar quantities such as the probability of visiting
each node and probability fluxes on each edge, we emphasize that both the
subject being studied and the setting are
different from the study of TPT~\cite{tpt_jump, Cameron2014}.
Specifically, we will study the first passage path by dividing it into
nonreactive and reactive segments.
While in the ergodic case the reactive segments coincide with the reactive
trajectories in TPT, in this work we also study the statistics of nonreactive
segments and their relations with the statistics of the entire \textit{first passage paths}.
Furthermore, in our study the processes are allowed to be non-ergodic, and in the continuous-time scenario the waiting time on each node can be also non-exponentially distributed.
The main contributions of this paper can be summarized as follows. Firstly,
statistical properties of both nonreactive and reactive path ensembles have been
analyzed, which enables us to compute several important quantities
associated to the reactive- and nonreactive ensembles. Their relations to the entire first passage
path ensemble are obtained subsequently. Secondly, since the processes are not necessarily ergodic or in stationary anymore, 
our analysis (of the reactive segments) can be viewed as an extension of TPT to nonequilibrium path ensembles. 
When further assuming that the processes are ergodic Markov jump processes and the first passage path
ensemble is in equilibrium, our analysis recovers TPT and allows to
elucidate the statistical connection between transition paths in TPT and the first passage paths.
Thirdly, the numerical implementation of the derived results is discussed, which is useful in many applications.

The paper is organized as follows:
The first passage path ensemble of discrete-time Markov jump processes
is studied in Section~\ref{sec-reactive}. The first passage path ensemble 
of continuous-time jump processes with general waiting time distributions is
considered in Section~\ref{sec-ctjp}.
Connections of our analysis with TPT are discussed in Section~\ref{sec-ergodic}
where we recover the results of TPT when the processes are ergodic and the path
ensemble is in equilibrium.
Algorithmic issues are discussed in Section~\ref{sec-algo}
and several numerical examples are presented in
Section~\ref{sec-example} to illustrate our analysis framework.
Conclusions and further discussions are present in Section~\ref{sec-conclusion}.
Finally, some supplementary analysis related to Section~\ref{sec-reactive} and
Section~\ref{sec-ctjp} is provided in Appendix~\ref{appsec-1} and Appendix~\ref{appsec-2}.

For the reader's convenience, important notations which will be used in the current work are summarized in Table~\ref{tab-notation}.

\begin{table}[htbp]\caption{Notations used throughout the paper}
  \small
\begin{center}
  \begin{tabular}{l@{\hskip 0.2cm}p{5.7cm}@{\hskip 0.5cm}|l@{\hskip 0.2cm}p{5.7cm}}
\hline
    $\mathcal{N}_x$ & set of neighbor nodes & $\mathcal{T}$ &  set of sink nodes \\
    $V^{-}$ &  subset of node set $V$ & $V^{+}$ &  subset of node set $V$\\
    $q$ & committor function & $q^-$ & backward committor function \\
    $m$ & invariant measure of discrete process & $Z$ & normalization constant \\
    $\Xi_{non}$ & nonreactive trajectory ensemble & $\Xi_{r}$ & reactive trajectory ensemble \\
    $\sigma_x$ & last hitting time of set $A$ & $\mu$ & initial distribution on set $A$ \\
    $\mu_r$ & initial distribution of reactive trajectories & $p$ & transition probability of discrete process \\
$\psi$ & probability density of the waiting time along an edge & $\bar{p}$ & transition probability of the nonreactive trajectories \\
    $\theta$ & average number of times that the first passage paths visit a node & $\widetilde{p}$ & transition probability of the reactive trajectories \\
$\bar{\theta}'$ & average number of times that nonreactive trajectories
(except the last node) visit a node & $J$ & average number of times that
the first passage paths visit an edge \\
$\bar{\theta}$ & average number of times that nonreactive trajectories visit a node & $\bar{J}$ & average number of times that nonreactive
trajectories visit an edge \\
$\widetilde{\theta}$ & average number of times that reactive trajectories visit a node &
$\widetilde{J}$ & average number of times that the reactive trajectories visit an edge \\
  $a$ & probability that system stay at a node longer than certain amount of time &
$T$ & average total time of the first passage paths \\
$b$ & probability density that the system jumps along an edge at a certain time
     & $\bar{T}$ & average total time of the nonreactive trajectories \\
$\kappa$ & average amount of time that the system stays at a node
    & $\widetilde{T}$ & average total time of the reactive trajectories \\
\hline
\end{tabular}
\end{center}
\label{tab-notation}
\end{table}
\section{Analysis of the first passage path ensemble : discrete-time Markov jump processes}
    \label{sec-reactive}
    In this section, we study the first passage path ensemble when the system is a discrete-time Markov jump process.
    After introducing various useful notations in
    Subsection~\ref{subsec-prepare}, we will analyze the statistics of the nonreactive
    ensemble and the reactive ensemble in Subsection~\ref{subsec-nonreactive} and
    Subsection~\ref{subsec-reactive}, respectively. Their connections with the
    whole first passage path ensemble are discussed in Subsection~\ref{subsec-fpp}.

    We also emphasize that although several quantities that we will consider are strongly influenced by TPT, the main difference is
    that they are related to the first passage path ensemble which may be out
    of equilibrium and therefore do not reply on the existence of invariant
    measure. Connections of our analysis with TPT will be further discussed in Section~\ref{sec-ergodic}. 
    \subsection{Preparations}
    \label{subsec-prepare}
    We start by introducing the processes we will consider and the notations that
    will be used in this paper.
Let $G=(V, E)$ be the graph representation of a directed network $G$, where $V$ is
the set of nodes and $E \subseteq V \times V$ is the set of edges.
We assume that $V$ is a finite set and, without loss of generality, $V =
\{1,2,\cdots, n\}$ for some $n>1$.
We will write $x\rightarrow y$ if $(x,y) \in E$ and denote $\mathcal{N}_x = \{\,y \in V~|~x \rightarrow y\}$
as the set consisting of all nodes which can be directly reached from node $x$.
For simplicity, we assume that there are no loop edges in $G$, i.e. $x
\not\in \mathcal{N}_x$, for $\forall x \in V$.
  Since we are also interested in the case when the network $G$ contains sinks, we
  allow the case when $\mathcal{N}_x = \emptyset$ for some node $x \in V$ and
  denote $\mathcal{T} = \{x \in V~|~ \mathcal{N}_x = \emptyset\}$ to be the set of sink
  nodes.

 Suppose that two disjoint nonempty subsets $A, B \subset V$ are
    given. And let $\mu$ be a probability distribution on set $A$. Consider the
    discrete-time Markov process defined by the jump probability
    $p(y\,|\,x)$ for $x, y \in V$, which is the probability that the system
    will jump to state $y$ if its current state is $x$.
Given node $x \in V$, we define two stopping times related to sets $A,B$
\begin{align}
  \tau_{A,x} = \min_{k\ge 0}\big\{\,k ~\big|~ x_0 = x\,, x_k \in A \big\}\,, \quad
  \tau_{B,x} = \min_{k \ge 0}\big\{\,k~\big|~ x_0 = x\,, x_k \in B \big\}\,,
\end{align}
and set $\tau_{A,x}=+\infty$ (or $\tau_{B,x}=+\infty$) if the process will never
reach set $A$ (or $B$) from $x$.
Especially, we have $\tau_{A,x} \equiv 0$ for $x \in A$ and $\tau_{B,x}\equiv 0$ for $x \in B$.
    Assume the process starts from some node $x \in B^c$
    and consider the path until it reaches set $B$.
    Denote the path as $(x_0, x_1, \cdots, x_k)$, then we have $x_0 =
    x$, $x_k \in B$, where $k = \tau_{B,x}$ and $x_l \not \in B$ for $0 \le l < k$.
    Such a path is called the \emph{first passage path}~\cite{first-passage-time-nature,metzler2014first}.
    We also define the \emph{last} hitting time of set $A$ as
    \begin{align}
      \sigma_x= \max_{l \ge 0}\Big\{l ~\Big|~ x_l \in A,\, 0 \le l < \tau_{B,x}\Big\}\,,
	\label{sigma-def}
    \end{align}
    and set $\sigma_x = +\infty$, if $x_l \not\in A$ for $0 \le l < \tau_{B,x}$.
In the following, we will use the notations $\tau_A$, $\tau_B$ and $\sigma$
for simplicity if we
do not explicitly emphasize the initial state $x$.

    Given any first passage path starting from a state in set $A$ (therefore $\sigma <
    +\infty$), we can split this path into two
segments using the last hitting time $0 \le \sigma < \tau_B$.
The first segment $(x_0, x_1, \cdots, x_{\sigma})$, which will be termed as
\emph{nonreactive trajectory} or \emph{nonreactive segment}, consists of the consecutive nodes visited by the
process before it eventually leaves set $A$.
The second segment $(x_{\sigma}, x_{\sigma+1}, \cdots, x_{\tau_B})$, which is called \emph{reactive
trajectory} in transition path theory~\cite{tpt_jump}, is the transition pathway of the process from set $A$
to set $B$ (see Figure~\ref{graph-1}(a) for illustration). We will call it either
\emph{reactive trajectory} or \emph{reactive segment}.
    Equivalently, the reactive trajectory is the segment of the first
    passage path starting from some node in $A$ which is hit by the process for the last time.
    Clearly, we have $x_{\sigma} \in A$ and $x_l \in (A\cup B)^c$ for $\sigma < l < \tau_B$.
    The ensembles of the nonreactive trajectories and reactive trajectories
    are denoted by
    \begin{align}
      \Xi_{non} = \Big\{(x_0, x_1, \cdots, x_\sigma)\Big\}\,,\qquad \Xi_r =
      \Big\{(x_{\sigma}, x_{\sigma+1}, \cdots, x_{\tau_B})\Big\}\,,
      \label{ensembles}
    \end{align}
    respectively, where $(x_0, x_1, \cdots, x_{\tau_B})$ goes over all first
    passage paths with initial state $x_0 \in A$ and $x_0 \sim \mu$.
\begin{figure}[htp]
\centering
\begin{tabular}{l@{\hskip 1.5cm}r}
\subfigure[]{\includegraphics[width=0.53\textwidth]{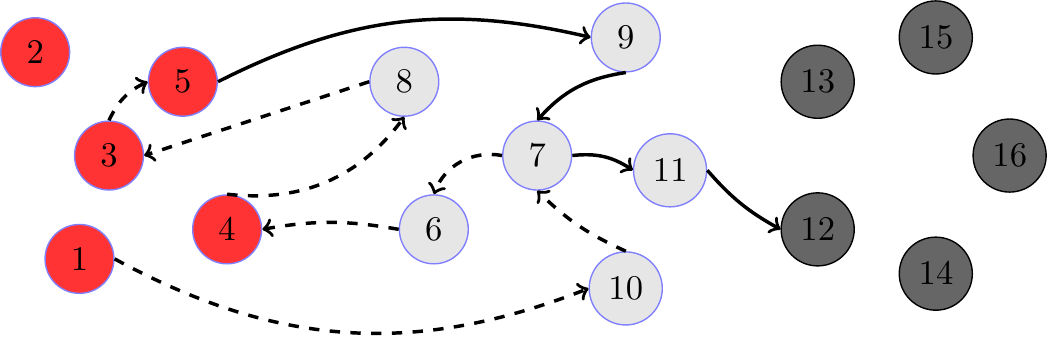}} &
\subfigure[]{\includegraphics[width=0.33\textwidth]{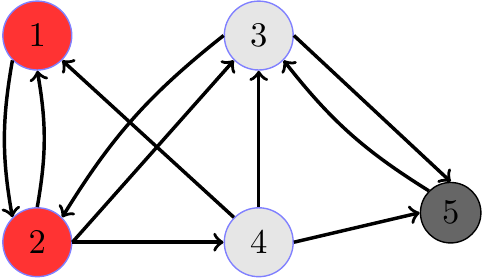}}
\end{tabular}
\caption{(a) Illustration of the nonreactive and reactive trajectories (segments) within a first
passage path starting from node $1$. We have sets $A = \{1,2,3,4,5\}$, $B=\{12,
13,14,15, 16\}$. Dashed line and solid line are the nonreactive trajectory and
reactive trajectory of the first passage path, respectively. In this case, we
have $\sigma = 7$, $x_\sigma = 5$ and $\tau_B = 11$. (b) A simple example
shows the difference between probability distributions $\mu$ and $\mu_r$. We
have sets $A=\{1,2\}$ and $B=\{5\}$. Since the first passage path can only leave set $A$
from node $2$, we have $\mu_r(1) = 0$, $\mu_r(2) = 1.0$ for any initial
probability distribution $\mu$.}
\label{graph-1}
\end{figure}

Given a node $x \in V$, we say that set $A$ (or $B$) is \emph{reachable} from $x$, if there is a path of
finite length $x_0, x_1, \cdots, x_k$, where $k \ge 0$, such that $x_0 = x$, $x_k \in A$ (or
$x_k \in B$), $x_l \in (A \cup B)^c$, $1 \le l \le k-1$, and $p(x_{l+1}\,|\,x_l) > 0$, $0 \le l \le k-1$.
Throughout the paper, we will make the following assumption.
    \begin{assump}
      $(A\cup B)^c \not=\emptyset$. For each node $x \in (A\cup B)^c$,
      either set $A$ or set $B$ is reachable from $x$. Furthermore, set $B$ is
      reachable from each node $x \in A$.
    \label{assump-1}
    \end{assump}

    Under the above assumption, we can conclude by contradiction that $\tau_{A,x} \wedge
    \tau_{B,x} < +\infty$ with probability $1$ for all $x \in V$.
  Therefore, the committor function,
  \begin{align}
  q(x) = \mathbb{P}(\tau_{B,x} < \tau_{A,x})\,, \quad \forall x \in V,
  \label{committor-q}
\end{align}
  corresponding to sets $A, B$ is well defined and will play an important role
  in the following study. It is known that it
   satisfies the equations~\cite{tpt_jump}
    \begin{align}
      \begin{split}
      &\sum_{y \in V} p(y\,|\,x)\, q(y) = q(x)\, , \quad \forall x \in (A\cup B)^c\,, \\
      &q|_A=0, \quad q|_B=1\,.
    \end{split}
    \label{committor}
    \end{align}
We also define two subsets
  \begin{align}
V^-=\Big\{x \,\Big|\, q(x) < 1,\, x \in V\Big\}, \qquad V^+=\Big\{\,x ~\Big|~ \sum_{z \in V} p(z\,|\,x) q(z) > 0\,, x \in B^c\Big\}\,.
    \label{v-minus-plus}
  \end{align}
  Clearly, we have $A \subseteq V^- \subseteq B^c$ and
$A \subseteq V^+ \subseteq B^c$, where the latter is implied by Assumption~\ref{assump-1} together with the following result.
\begin{prop}
  Let $\sigma_x$ be the last hitting time defined in (\ref{sigma-def}) and subsets
  $V^-, V^+$ be defined in (\ref{v-minus-plus}). We have
  \begin{enumerate}
    \item
    $\mathbb{P}(\sigma_x < +\infty) = 1 - q(x)$, $\forall x \in V$.
    \item
      $x \in V^-$ iff $x \in B^c$ and set $A$ is reachable from node $x$.
    \item
      $x \in V^+$ iff $x \in B^c$ and set $B$ is reachable from node $x$.
\end{enumerate}
\label{prop-0}
  \end{prop}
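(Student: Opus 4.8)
The plan is to derive all three statements from the probabilistic meaning of the committor, $q(x)=\mathbb{P}(\tau_{B,x}<\tau_{A,x})$, treating item~1 as the engine for items~2 and~3. For item~1 I would first clear the boundary cases coming from (\ref{committor}): if $x\in B$ then $\tau_{B,x}=0$, the index set in (\ref{sigma-def}) is empty so $\sigma_x=+\infty$, agreeing with $1-q(x)=0$; if $x\in A$ then $q(x)=0$ and it remains only to see $\sigma_x<+\infty$ almost surely. The core case is $x\in(A\cup B)^c$, where I would establish, up to a null set,
\[
\{\sigma_x<+\infty\}=\{\tau_{A,x}<\tau_{B,x}\}.
\]
The inclusion $\subseteq$ is deterministic: if $\sigma_x<+\infty$ then $x_l\in A$ for some $l<\tau_{B,x}$, whence $\tau_{A,x}\le l<\tau_{B,x}$. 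Since $\{\tau_{A,x}<\tau_{B,x}\}$ and $\{\tau_{B,x}<\tau_{A,x}\}$ are disjoint, can never coincide (as $A\cap B=\emptyset$), and together exhaust the sample space with probability one by Assumption~\ref{assump-1}, the definition of $q$ gives $\mathbb{P}(\tau_{A,x}<\tau_{B,x})=1-q(x)$, so the identity above yields $\mathbb{P}(\sigma_x<+\infty)=1-q(x)$.

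The reverse inclusion $\supseteq$ is the substantial half and the main obstacle: on $\{\tau_{A,x}<\tau_{B,x}\}$ the process does enter $A$, but I must exclude the possibility that it then oscillates in and out of $A$ infinitely often without ever reaching $B$, which would force $\sigma_x=+\infty$. Here Assumption~\ref{assump-1} is essential. Because $B$ is reachable from every node of the finite set $A$, each $a\in A$ admits a finite path to $B$ whose interior avoids $A\cup B$ and carries strictly positive transition probabilities; taking a minimum over the finitely many $a\in A$ gives a uniform $\delta>0$ with $\mathbb{P}(\text{reach }B\text{ before returning to }A\mid X_0=a)\ge\delta$. Applying the strong Markov property at the successive visits of the process to $A$, the number of such visits before $\tau_{B,x}$ is stochastically dominated by a geometric random variable and is thus finite almost surely. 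This delivers both $\tau_{B,x}<+\infty$ and a finite last visit on $\{\tau_{A,x}<\tau_{B,x}\}$, proving $\supseteq$; the very same bound, started from $x\in A$, settles the case $x\in A$ above.

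For item~2, note that $q\equiv 1$ on $B$ gives $V^-\subseteq B^c$ at once, and for $x\in B^c$ item~1 reads $q(x)<1\Leftrightarrow\mathbb{P}(\tau_{A,x}<\tau_{B,x})>0$. The latter holds precisely when some positive-probability realization reaches $A$ strictly before $B$; extracting the first hitting of $A$ from such a realization produces a path from $x$ to $A$ with interior in $(A\cup B)^c$, i.e.\ reachability of $A$ from $x$, and conversely every reachability path has positive probability. For item~3, $V^+\subseteq B^c$ is built into its definition, and for $x\in B^c$ I would read $\sum_{z}p(z\,|\,x)\,q(z)>0$ as the existence of a neighbor $z$ with $p(z\,|\,x)>0$ and $q(z)>0$. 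The same path argument shows $q(z)>0$ iff $z\in B$ or $B$ is reachable from $z$ (strict positivity forcing $z\notin A$); concatenating the edge $x\to z$ with a $B$-reachability path out of $z$ gives reachability of $B$ from $x$, while the converse is obtained by splitting a $B$-reachability path from $x$ at its first step. Combined with Assumption~\ref{assump-1}, item~3 also yields the inclusion $A\subseteq V^+$ asserted just before the proposition.
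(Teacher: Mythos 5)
Your proof is correct, and in two places it genuinely diverges from (and is more thorough than) the paper's own argument. For item 1 both proofs rest on the event identity $\{\sigma_x<+\infty\}=\{\tau_{A,x}<\tau_{B,x}\}$ up to null sets, but the paper asserts this equivalence ``by definition of $\sigma_x$'', while you correctly flag the nontrivial inclusion and close it with the geometric-trials/strong-Markov bound (a uniform $\delta>0$ over the finite set $A$, using the part of Assumption~\ref{assump-1} stating that $B$ is reachable from every node of $A$); this is exactly what rules out infinitely many returns to $A$ with $\tau_{B,x}=+\infty$, a point the paper leaves implicit. For the converse of item 2 the two routes differ in kind: the paper argues by contradiction through the linear system (\ref{committor}), propagating $q\equiv 1$ along the reachability path until it contradicts $q|_A=0$ (a discrete maximum-principle argument), whereas you observe that every reachability path carries positive probability, hence $\mathbb{P}(\tau_{A,x}<\tau_{B,x})>0$ and $q(x)<1$. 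Your route is more elementary and never needs the equations (\ref{committor}), only the probabilistic definition (\ref{committor-q}); the paper's route is shorter on the page and would still apply if $q$ were defined as the solution of (\ref{committor}) rather than as a hitting probability. Item 3, which the paper dismisses as ``similar'' to item 2, you prove in full by characterizing $q(z)>0$ through reachability and splitting or concatenating paths at the first edge, which is consistent with the paper's intent. One minor remark: your step ``finitely many visits to $A$ delivers $\tau_{B,x}<+\infty$'' tacitly uses that after the last visit to $A$ the process almost surely exits $(A\cup B)^c$ again (so it must hit $B$); this follows from the fact, stated just before the proposition, that $\tau_{A,x}\wedge\tau_{B,x}<+\infty$ almost surely for every starting node, and deserves an explicit citation at that point.
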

  \begin{proof}
    We will only prove the first two conclusions since the third one can be
    obtained using a similar argument as the second one.
    \begin{enumerate}
      \item
	By definition of $\sigma_x$, we know that the two events
	  $\sigma_x < +\infty$ and $\tau_{A, x} < \tau_{B,x}$ are equivalent.
	  It follows from the definition of $q$ in
	  (\ref{committor-q}) that $\mathbb{P}(\sigma_x < +\infty) =
	  \mathbb{P}(\tau_{A,x} < \tau_{B,x})  = 1 - q(x)$.
	\item
	  Suppose $x \in V^-$. Using the definition of $q$, we know
	  $\tau_{A,x}< +\infty$ with a positive probability, which implies
	  that set $A$ is reachable from $x$. Conversely, suppose $x \in B^c$ and set $A$ is reachable
	  along the path $x_0 = x, x_1, \cdots, x_k \in A$.
	  If $x \not\in V^-$, then $q(x) = 1$ and $x \in (A \cup
	  B)^c$. Applying equation (\ref{committor}), we obtain $q(x_1) = 1$.
	  As $x_l \in (A\cup B)^c$ for $1 \le l \le k-1$, we can repeat the
	  argument and obtain $q(x_k) = q(x_{k-1}) = \cdots = q(x_1) = 1$.
	  However, this contradicts the fact that $q|_A=0$ since $x_k\in A$.
	  Therefore we have $x \in V^-$.
	\end{enumerate}
  \end{proof}
  From Proposition~\ref{prop-0}, we know that Assumption~\ref{assump-1}
  implies $V^- \cup V^+ = B^c$ and $\mathcal{T} \subset B$.
    \subsection{Nonreactive ensemble}
    \label{subsec-nonreactive}
    In this subsection, we study the nonreactive ensemble $\Xi_{non}$ defined in (\ref{ensembles}).
Given a nonreactive trajectory $x_0, x_1, \cdots, x_{\sigma}$, where $x_0 \in
A$ and $\sigma$
is the last hitting time defined in (\ref{sigma-def}), it is clear
that $x_l \in V^-$ for $0\le l \le \sigma$ (Proposition~\ref{prop-0}). Our aim is to define a Markov jump
process whose path ensemble coincides with $\Xi_{non}$. For this purpose, we first
introduce a virtual node $0$ to mark the end of the path and consider the
extended node set $V^-\cup
\{0\}$. A jump from some node $x \in A$ to node $0$ indicates that
$x$ is the last node of the nonreactive trajectory. We also define the transition probability from
node $y \in V^{-}$ to $x \in V^{-} \cup \{0\}$ as
\begin{align}
  \bar{p}(x\,|\,y) = \left\{
    \begin{array}{cl}
      \frac{p(x\,|\,y) (1-q(x))}{1 - q(y)}\,,& \mbox{if}~ x \neq 0 \,, \\
      \sum\limits_{z \in V} p(z\,|\,y)\, q(z) \,, & \mbox{if}~x = 0\,,~ y \in A\,, \\
      0  \,, & \mbox{if}~x = 0\,,~ y \not\in A\,, \\
    \end{array}
    \right.
    \label{p-bar}
\end{align}
and $\bar{p}(x\,|\,0) = \delta(x)$. Using (\ref{committor}), we can verify that $\bar{p}$ is a
probability matrix on set $V^-\cup \{0\}$ with row sum one. Furthermore, we have
\begin{prop}
  The ensemble $\Xi_{non}$ can be generated by trajectories (without the end node $0$) of the Markov jump process on space $V^-\cup \{0\}$ with
  transition probability matrix $\bar{p}$ in (\ref{p-bar}) and initial
  distribution $\mu$.
  \label{prop-1}
\end{prop}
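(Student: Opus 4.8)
The plan is to prove that two probability measures on the space of finite trajectories coincide: the law of the nonreactive segment $(x_0,\dots,x_\sigma)$ carried by the first passage path ensemble, and the law of a sample path of the chain with matrix $\bar p$, started from $\mu$ and truncated just before it enters the absorbing node $0$. Both are supported on sequences $(y_0,\dots,y_m)$ with $y_0,y_m\in A$ and $y_l\in V^-$ for $0\le l\le m$; indeed every node of a nonreactive trajectory lies in $V^-$ by Proposition~\ref{prop-0}. Since the state space is countable, it suffices to show that the two measures assign the same weight to each such admissible sequence. First I would fix one sequence and compute its weight under each description.

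On the first passage side, I would express the event that the nonreactive segment equals $(y_0,\dots,y_m)$ as $\{x_0=y_0,\dots,x_m=y_m,\ \sigma=m\}$ and factor it at time $m$ using the Markov property. The initial part contributes $\mu(y_0)\prod_{l=0}^{m-1}p(y_{l+1}\,|\,y_l)$. The crucial step is a last-exit characterization of $\{\sigma=m\}$: conditioned on $x_m=y_m\in A$, we have $\sigma=m$ precisely when the process, after leaving $y_m$, reaches $B$ before it ever returns to $A$. Conditioning on the first step $x_{m+1}=z$ and recalling $q(z)=\mathbb P(\tau_{B,z}<\tau_{A,z})$, this conditional probability equals $\sum_{z\in V}p(z\,|\,y_m)\,q(z)$, which is exactly $\bar p(0\,|\,y_m)$ by (\ref{p-bar}). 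Hence the first passage measure gives the sequence weight $\mu(y_0)\big(\prod_{l=0}^{m-1}p(y_{l+1}\,|\,y_l)\big)\bar p(0\,|\,y_m)$.

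On the $\bar p$ side, the same sequence followed by the terminal jump into $0$ has weight $\mu(y_0)\big(\prod_{l=0}^{m-1}\bar p(y_{l+1}\,|\,y_l)\big)\bar p(0\,|\,y_m)$. Substituting the definition (\ref{p-bar}) of $\bar p$ on $V^-$, the committor factors telescope,
\begin{align*}
\prod_{l=0}^{m-1}\bar p(y_{l+1}\,|\,y_l)&=\Big(\prod_{l=0}^{m-1}p(y_{l+1}\,|\,y_l)\Big)\prod_{l=0}^{m-1}\frac{1-q(y_{l+1})}{1-q(y_l)}\\
&=\Big(\prod_{l=0}^{m-1}p(y_{l+1}\,|\,y_l)\Big)\frac{1-q(y_m)}{1-q(y_0)},
\end{align*}
and since $y_0,y_m\in A$ give $q(y_0)=q(y_m)=0$ by (\ref{committor}), the trailing fraction is $1$. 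The two weights therefore agree for every admissible sequence, which is the claim.

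I expect the main obstacle to be the last-exit step, namely arguing rigorously that, conditionally on $x_m=y_m$, the event $\{\sigma=m\}$ coincides with hitting $B$ strictly before any return to $A$, and that its probability collapses to $\sum_z p(z\,|\,y_m)q(z)=\bar p(0\,|\,y_m)$. The telescoping is then routine but uses $1-q(y_l)>0$, i.e. $y_l\in V^-$, for every $l$ (Proposition~\ref{prop-0}), while the vanishing of the boundary terms rests on $q|_A=0$. As a consistency check, $x_0\in A$ forces $q(x_0)=0$, so $\mathbb P(\sigma<+\infty)=1$ by Proposition~\ref{prop-0}; matching the weights term by term then also shows the $\bar p$ chain is absorbed at $0$ almost surely, so that both measures are genuine probability measures.
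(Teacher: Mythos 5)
Your proof is correct, but it takes a genuinely different route from the paper's. The paper argues \emph{locally}: it computes the one-step conditional transition probabilities of the nonreactive ensemble, using a Bayes/Markov factorization to show that $\mathbb{P}\big(x_{k+1}=x\,\big|\,x_k=y,\dots,\mbox{being nonreactive}\big)=p(x\,|\,y)(1-q(x))/(1-q(y))=\bar{p}(x\,|\,y)$ for $y\in V^-\cap A^c$, and separately that the termination probability from $y\in A$ equals $\sum_{z}p(z\,|\,y)q(z)=\bar{p}(0\,|\,y)$; in other words, it exhibits $\bar{p}$ as the transition law of the process conditioned to remain nonreactive (a Doob $h$-transform with $h=1-q$), with the committor ratio appearing at every step. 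You instead argue \emph{globally}: you compare the weights the two measures assign to an entire admissible path, invoking the last-exit decomposition only once (at the final node, on the first-passage side) and letting the committor factors cancel by telescoping, with the boundary terms killed by $q|_A=0$. Both proofs rest on the same probabilistic core, the Markov property plus the identity $\bar{p}(0\,|\,y)=\sum_z p(z\,|\,y)q(z)$, but they package it differently. Your version is arguably more self-contained: it avoids the paper's somewhat informal ``being nonreactive'' conditioning events, makes explicit where $y_l\in V^-$ (nonvanishing denominators) and $q|_A=0$ are needed, and yields as a by-product that the $\bar{p}$-chain is almost surely absorbed at $0$; the only point to state carefully is that the equivalence of $\{\sigma=m\}$ with ``hit $B$ before returning to $A$'' holds up to a null event, which is guaranteed by Assumption~\ref{assump-1} since $\tau_A\wedge\tau_B<+\infty$ almost surely. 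The paper's version, in turn, makes the Markov structure of the conditioned ensemble transparent and is exactly the computation that transfers verbatim to the reactive ensemble (Proposition~\ref{prop-4}, with $h=q$ in place of $1-q$), which is why the paper can omit that second proof.
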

\begin{proof}
  Let $(x_0, x_1, \cdots, x_k, \cdots, x_\sigma) \in \Xi_{non}$ be one
  nonreactive trajectory such that $x_k=y$. For $y \in V^-\cap A^c$ and $x \in
  V^-$, using the
  Markov property and the definition of the committor function $q$, we can compute
  \begin{align*}
     &\mathbb{P}\big(x_{k+1} = x\,\big|\, (x_k=y, \cdots, x_0), \mbox{being
  nonreactive}\big) \\
  = & \frac{\mathbb{P}\big(x_{k+1} = x, x_k=y , \mbox{being
  nonreactive}\big)}{\mathbb{P}\big(x_k=y, \mbox{being
  nonreactive}\big)}\\
  = & \frac{\mathbb{P}\big(x_{k+1} = x\,, x_k=y\big)
\mathbb{P}\big(\mbox{being nonreactive}\,|\, x_{k+1} = x\big)}{\mathbb{P}\big(x_k=y\big)\mathbb{P}\big( \mbox{being nonreactive}\,|\,x_k=y\big)}\\
= & \frac{p(x\,|\,y) (1 - q(x))}{1 - q(y)}\,= \bar{p}(x\,|\,y)\,.
  \end{align*}
  For $y \in A$, event $\{k= \sigma\}$ is equivalent to the event that
  the original process (on $V$) hits set $B$ first (before it returns to $A$) after it leaves set
$A$ from node $y$. The probability of the latter is equal to $\sum\limits_{z \in V}
p(z\,|\,y) q(z)$ and therefore coincides with the probability that the
process (on $V^- \cup \{0\}$) jumps from $y$ to $0$.
The case for $x \in V^-, y\in A$ can be argued using a similar argument.
\end{proof}

Given $x \in V^-$, let $\bar{\theta}(x)$ be the average number of times that node $x$
is visited by the nonreactive trajectories, i.e.
\begin{align}
  \bar{\theta}(x) = \langle \sum_{l=0}^{\sigma} \mathbf{1}_{x}(x_l)\rangle \,, \quad x
  \in V^-\,,
  \label{theta-bar}
\end{align}
where $\mathbf{1}_{x}(\cdot)$ is the indicator function, $\langle\cdot\rangle$ denotes the ensemble average of $\Xi_{non}$, or
equivalently, the ensemble average of the first passage paths with the initial distribution $\mu$.  It
is straightforward to verify
\begin{align}
  \sum_{x \in V^-} \bar{\theta}(x) = \langle \sum_{l=0}^{\sigma}\sum_{x \in V^-}
  \mathbf{1}_{x}(x_l)\rangle = \langle \sigma \rangle + 1\,.
  \label{theta-bar-sum}
\end{align}
Furthermore, we have
\begin{prop}
  For $x \in V^-$, function $\bar{\theta}(\cdot)$ defined in (\ref{theta-bar})
  satisfies the equation
\begin{align}
  \bar{\theta}(x) = \mu(x)\mathbf{1}_{A}(x) + \sum_{y \in V^-} \bar{\theta}(y)
  \bar{p}(x\,|\,y)\,,
    \label{theta-bar-eqn}
\end{align}
where probabilities $\bar{p}$ are given in (\ref{p-bar}).
\label{prop-2}
\end{prop}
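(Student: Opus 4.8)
The plan is to reinterpret $\bar\theta(x)$ as a Green's-function-type quantity for the absorbing Markov chain constructed in Proposition~\ref{prop-1}, and then to obtain (\ref{theta-bar-eqn}) by summing the one-step Chapman--Kolmogorov relation over all times. First I would invoke Proposition~\ref{prop-1} to replace the ensemble average defining $\bar\theta$ by an expectation with respect to the Markov chain $(\bar x_l)_{l\ge 0}$ on $V^-\cup\{0\}$ with transition matrix $\bar p$ and initial distribution $\mu$ (which is supported on $A\subseteq V^-$). Since the virtual node $0$ is absorbing, i.e. $\bar p(\cdot\,|\,0)=\delta$, and since $x\in V^-$ so $x\neq 0$, the truncated count $\sum_{l=0}^{\sigma}\mathbf{1}_x(\bar x_l)$ agrees with $\sum_{l=0}^{\infty}\mathbf{1}_x(\bar x_l)$. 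Taking expectations and using Tonelli's theorem (all terms are nonnegative) then gives $\bar\theta(x)=\sum_{l=0}^{\infty}\mathbb{P}(\bar x_l=x)$.

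Next I would isolate the $l=0$ term, which equals $\mathbb{P}(\bar x_0=x)=\mu(x)=\mu(x)\mathbf{1}_A(x)$ because $\mu$ is supported on $A$, and rewrite each term with $l\ge 1$ using one step of the transition matrix, $\mathbb{P}(\bar x_l=x)=\sum_{y\in V^-\cup\{0\}}\mathbb{P}(\bar x_{l-1}=y)\,\bar p(x\,|\,y)$. The contribution of $y=0$ vanishes, since $\bar p(x\,|\,0)=\delta(x)=0$ for $x\neq 0$, so the inner sum may be restricted to $y\in V^-$. Interchanging the (nonnegative) sums over $l$ and $y$, reindexing $l\mapsto l-1$, and recognizing $\sum_{l=0}^{\infty}\mathbb{P}(\bar x_l=y)=\bar\theta(y)$ then yields exactly (\ref{theta-bar-eqn}). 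Intuitively this is just the balance ``expected visits to $x$ = initial mass at $x$ + expected arrivals into $x$ from its $\bar p$-predecessors.''

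The only genuinely delicate point is ensuring that the series $\sum_l\mathbb{P}(\bar x_l=x)$ converges, i.e. that $\bar\theta(x)<+\infty$, so that the rearrangements are meaningful and the identity is not a vacuous $\infty=\infty$. This is where Assumption~\ref{assump-1} enters: combined with Proposition~\ref{prop-0} it guarantees that under $\bar p$ the absorbing node $0$ is reachable from every node of $V^-$ (reach $A$ within $V^-$, then jump to $0$ from $A$), so on the finite state space the chain $(\bar x_l)$ is absorbed at $0$ with probability one and in finite expected time; hence each expected visit count is finite. I would establish this finiteness first; once it is in hand, the summation argument above is entirely routine and the Tonelli interchanges require no further justification.
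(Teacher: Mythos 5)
Your proof is correct and follows essentially the same route as the paper's: both pass via Proposition~\ref{prop-1} to the absorbing chain on $V^-\cup\{0\}$ and obtain (\ref{theta-bar-eqn}) by counting arrivals into $x$ through a one-step decomposition plus index shift — your Chapman--Kolmogorov/Green's-function summation is the same computation the paper carries out via the ensemble-average representation of $\bar{p}$ with the convention $x_{\sigma+1}=0$. Your explicit verification that $\bar{\theta}(x)<+\infty$ (reachability of the absorbing node $0$ from every state of $V^-$ under $\bar{p}$) is a worthwhile supplement that the paper leaves implicit, but it does not change the underlying argument.
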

\begin{proof}
  Using Proposition~\ref{prop-1}, we can rewrite the
  transition probabilities $\bar{p}$ using the ensemble average, i.e.
  \begin{align}
    \bar{p}(x\,|\,y) = \frac{\langle \sum\limits_{l=0}^\sigma \mathbf{1}_y(x_l)
    \mathbf{1}_x(x_{l+1})\rangle}{\langle \sum\limits_{l=0}^\sigma \mathbf{1}_y(x_{l})\rangle}
    \label{p-bar-path}
  \end{align}
  where $x,y \in V^-$ and we have set $x_{\sigma+1} = 0$, which marks the end of the nonreactive trajectory.
  Therefore, using the definition of $\bar{\theta}$ in (\ref{theta-bar}) and summing up $y \in V^-$, we obtain
  \begin{align*}
    \sum_{y \in V^-} \bar{\theta}(y) \bar{p}(x\,|\,y) = \sum_{y \in V^-} \langle \sum\limits_{l=0}^\sigma \mathbf{1}_y(x_l) \mathbf{1}_x(x_{l+1})\rangle
    =\langle \sum_{l=1}^{\sigma+1} \mathbf{1}_x(x_l)\rangle \,.
\end{align*}
Since $x \in V^-$, we know $\mathbf{1}_x(x_{\sigma + 1}) = 0$ and therefore
\begin{align}
  \sum_{y \in V^-} \bar{\theta}(y) \bar{p}(x\,|\,y) = \langle \sum_{l=0}^{\sigma}
  \mathbf{1}_x(x_l)\rangle - \langle \mathbf{1}_x(x_0)\rangle \, =
  \bar{\theta}(x) - \mu(x) \mathbf{1}_A(x)\,,
\end{align}
where we have used the definition of $\bar{\theta}$ again and the fact that $x_0 \sim \mu$.
\end{proof}

Especially, summing up $x \in V^-$
in (\ref{theta-bar-eqn}) and using the fact that $\bar{p}$ has row
sum one, we can verify the equality
\begin{align}
  \sum_{x \in A} \bar{\theta}(x) \Big[\sum_{z \in V} p(z\,|\,x) q(z)\Big] = \sum_{x
  \in A} \bar{\theta}(x) \bar{p}(0\,|\,x) = 1\,.
  \label{theta-bar-sum-1}
\end{align}

It is also useful to introduce
\begin{align}
  \mu_r(x) = \langle \mathbf{1}_x(x_\sigma)\rangle\,, \quad x\in A\,,
  \label{mu-r}
\end{align}
which is the probability distribution of the last hitting node on set $A$ and
satisfies $\sum\limits_{x \in A} \mu_r(x) = 1$ (see Figure~\ref{graph-1} (b) for illustration).
Furthermore, in analogy to (\ref{theta-bar}), we define
\begin{align}
  \bar{\theta}'(x) = \langle \sum_{l=0}^{\sigma-1} \mathbf{1}_x(x_l)\rangle =
  \bar{\theta}(x) - \mu_r(x)\mathbf{1}_A(x)\,,
  \quad x\in V^-\,,
  \label{theta-bar-prime}
\end{align}
which coincides with $\bar{\theta}$ on $A^c$ and the summation above is interpreted to be zero when $\sigma = 0$.
The following relations are straightforward.
    \begin{prop}
      Let $\bar{\theta}$ and the probabilities $\bar{p}$ be defined in (\ref{theta-bar}) and
      (\ref{p-bar}), respectively. Then for $x \in A$,
      \begin{align}
	\begin{split}
	  \mu_r(x) =& \bar{\theta}(x) \bar{p}(0\,|\,x) = \bar{\theta}(x)
	\Big[\sum_{z \in V} p(z\,|\,x) q(z)\Big]\,,\\
	\bar{\theta}'(x) =& \bar{\theta}(x) - \mu_r(x) = \bar{\theta}(x)\sum_{z
      \in V^-} \bar{p}(z\,|\,x)\,.
      \end{split}
	\label{mu-r-theta-bar}
      \end{align}
      \label{prop-3}
    \end{prop}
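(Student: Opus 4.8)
The plan is to establish the two displayed lines by combining the path-based representation of the transition matrix $\bar{p}$ derived in the proof of Proposition~\ref{prop-1} with the row-sum-one property of $\bar{p}$ and the elementary identity defining $\bar{\theta}'$. No new machinery is needed; everything reduces to definitions and bookkeeping on the augmented trajectories.

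First I would tackle the identity $\mu_r(x) = \bar{\theta}(x)\bar{p}(0\,|\,x)$. The key observation is that the virtual end node $0$ occupies exactly one position along each augmented nonreactive trajectory, namely $x_{\sigma+1} = 0$. Applying the path-based formula (\ref{p-bar-path}) with destination $0$ in place of $x$, one obtains
\begin{align*}
  \bar{p}(0\,|\,x) = \frac{\langle \sum_{l=0}^{\sigma} \mathbf{1}_x(x_l)\,\mathbf{1}_0(x_{l+1})\rangle}{\langle \sum_{l=0}^\sigma \mathbf{1}_x(x_l)\rangle}\,,
\end{align*}
and since $\mathbf{1}_0(x_{l+1}) = 1$ forces $l = \sigma$, the numerator collapses to $\langle \mathbf{1}_x(x_\sigma)\rangle = \mu_r(x)$ by definition (\ref{mu-r}), while the denominator is exactly $\bar{\theta}(x)$ by (\ref{theta-bar}). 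Rearranging gives the first equality. The second equality $\bar{p}(0\,|\,x) = \sum_{z \in V} p(z\,|\,x) q(z)$ is then simply the definition of $\bar{p}(0\,|\,x)$ for $x \in A$ recorded in (\ref{p-bar}).

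For the second line, the identity $\bar{\theta}'(x) = \bar{\theta}(x) - \mu_r(x)$ is immediate from the defining relation (\ref{theta-bar-prime}) upon noting that $\mathbf{1}_A(x) = 1$ when $x \in A$. To reach the final expression I would invoke that $\bar{p}(\cdot\,|\,x)$ is a probability vector on $V^- \cup \{0\}$, so that $\sum_{z \in V^-}\bar{p}(z\,|\,x) = 1 - \bar{p}(0\,|\,x)$; multiplying by $\bar{\theta}(x)$ and substituting the first identity yields $\bar{\theta}(x)\sum_{z \in V^-}\bar{p}(z\,|\,x) = \bar{\theta}(x) - \mu_r(x) = \bar{\theta}'(x)$.

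Since every step reduces either to a definition or to the already-established path representation of $\bar{p}$, I do not expect a genuine obstacle. The only point that warrants care is the bookkeeping that the node $0$ appears at precisely the index $\sigma+1$, which is exactly what converts the probability flux into $0$ from $x$ into the last-hitting distribution $\mu_r(x)$; getting this indexing right is the crux of the short argument.
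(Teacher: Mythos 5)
Your proposal is correct and follows essentially the same route as the paper: both establish $\mu_r(x)=\bar{\theta}(x)\bar{p}(0\,|\,x)$ by applying the path-ensemble representation (\ref{p-bar-path}) of $\bar{p}$ (via Proposition~\ref{prop-1}) with the convention $x_{\sigma+1}=0$, noting that $\mathbf{1}_0(x_{l+1})$ singles out $l=\sigma$. The paper treats the second line as immediate, which your row-sum-one argument merely spells out.
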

    \begin{proof}
      We only need to prove the first equality in (\ref{mu-r-theta-bar}).
      Notice that (\ref{mu-r}) can be written as
      \begin{align*}
	\mu_r(x) = \langle \mathbf{1}_x(x_\sigma)\rangle  = \langle
	\sum_{l=0}^{\sigma} \mathbf{1}_x(x_l)
	\mathbf{1}_{0}(x_{l+1})\rangle\,,
      \end{align*}
      where we have used the convention that $x_{\sigma + 1} = 0$. Similarly
      as in (\ref{p-bar-path}), applying Proposition~\ref{prop-1}, we have
      \begin{align*}
	\bar{p}(0\,|\,x) = \frac{\langle
	\sum\limits_{l=0}^{\sigma} \mathbf{1}_x(x_l)
      \mathbf{1}_{0}(x_{l+1})\rangle}{
\langle \sum\limits_{l=0}^{\sigma} \mathbf{1}_x(x_l)\rangle}
= \frac{\mu_r(x)}{\bar{\theta}(x)}\,, \quad x \in A\,.
      \end{align*}
      Therefore, we conclude that
      \begin{align*}
	\mu_r(x) = \bar{\theta}(x) \bar{p}(0\,|\,x) = \bar{\theta}(x) \Big[\sum_{z \in V} p(z\,|\,x) q(z)\Big]\,.
      \end{align*}
    \end{proof}

We also introduce the \emph{nonreactive probability flux}, which is defined as
\begin{align}
  \bar{J}(x\rightarrow y) = \bar{\theta}(x) \bar{p}(y\,|\,x)\,, \qquad x,y \in V^-\,,
  \label{j-non-r-flux}
\end{align}
and equals zero for other edges in $E$.
From (\ref{theta-bar}) and (\ref{p-bar-path}), $\bar{J}$ can be written as a path ensemble average
\begin{align}
\bar{J}(x\rightarrow y) = \langle \sum\limits_{l=0}^\sigma \mathbf{1}_x(x_l)
\mathbf{1}_y(x_{l+1})\rangle\,,
\label{j-bar-path}
\end{align}
i.e. the average number of times that edge $x\rightarrow y$ is visited by the nonreactive
trajectories. Applying Proposition~\ref{prop-2} and Proposition~\ref{prop-3}, we have
\begin{corollary}
  \begin{align}
    \begin{split}
      &\sum\limits_{x \in V^-} \bar{J}(x\rightarrow y)  = \bar{\theta}(y)  - \mu(y)
    \mathbf{1}_A(y)\,, \qquad \forall y \in V^-\,,\\
    &\sum\limits_{y \in V^-} \bar{J}(x\rightarrow y)  = \bar{\theta}(x)\Big[1 -
  \mathbf{1}_A(x)\sum_{z \in V}p(z\,|\,x) q(z)\Big] = \bar{\theta}'(x) \,, \qquad \forall x \in V^-\,.
  \end{split}
    \end{align}
    \label{corollary-0}
\end{corollary}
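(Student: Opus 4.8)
The plan is to derive both identities directly from the definition $\bar{J}(x\rightarrow y) = \bar{\theta}(x)\bar{p}(y\,|\,x)$ in (\ref{j-non-r-flux}), together with the balance equation of Proposition~\ref{prop-2} and the row-sum-one property of $\bar{p}$, so that no fresh path-ensemble computation is needed; the corollary is essentially a bookkeeping consequence of the propositions already established.

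For the first identity I would fix $y \in V^-$ and sum the defining relation over the incoming node $x$, obtaining $\sum_{x\in V^-}\bar{J}(x\rightarrow y) = \sum_{x\in V^-}\bar{\theta}(x)\,\bar{p}(y\,|\,x)$. This is precisely the summation term appearing on the right-hand side of the balance equation (\ref{theta-bar-eqn}), read with the roles of $x$ and $y$ interchanged, so rearranging that equation immediately yields $\sum_{x\in V^-}\bar{J}(x\rightarrow y) = \bar{\theta}(y) - \mu(y)\mathbf{1}_A(y)$. The only care required here is tracking the swapped indices.

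For the second identity I would instead fix $x \in V^-$ and sum over the outgoing node $y$, factoring out $\bar{\theta}(x)$ to get $\sum_{y\in V^-}\bar{J}(x\rightarrow y) = \bar{\theta}(x)\sum_{y\in V^-}\bar{p}(y\,|\,x)$. Since $\bar{p}$ has row sum one on $V^-\cup\{0\}$, the partial sum over $V^-$ equals $1 - \bar{p}(0\,|\,x)$; invoking the definition (\ref{p-bar}), $\bar{p}(0\,|\,x)$ equals $\sum_{z\in V}p(z\,|\,x)q(z)$ when $x\in A$ and vanishes otherwise, which produces exactly the bracketed factor $1 - \mathbf{1}_A(x)\sum_{z\in V}p(z\,|\,x)q(z)$.

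It then remains to identify this expression with $\bar{\theta}'(x)$, which I would do by a case split. For $x\in A^c\cap V^-$ the indicator vanishes and the bracket is $1$, so the right-hand side is $\bar{\theta}(x) = \bar{\theta}'(x)$ directly from the definition (\ref{theta-bar-prime}). For $x\in A$ I would substitute $\mu_r(x) = \bar{\theta}(x)\sum_{z\in V}p(z\,|\,x)q(z)$ from Proposition~\ref{prop-3}, so that $\bar{\theta}(x)\big[1-\sum_{z\in V}p(z\,|\,x)q(z)\big] = \bar{\theta}(x) - \mu_r(x) = \bar{\theta}'(x)$, again by (\ref{theta-bar-prime}). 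There is no genuine analytic obstacle; the main (minor) point is simply handling the $x\in A$ versus $x\notin A$ distinction cleanly so that both branches collapse to the single formula for $\bar{\theta}'$.
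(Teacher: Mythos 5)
Your proposal is correct and follows exactly the route the paper intends: the corollary is stated as an immediate consequence of Proposition~\ref{prop-2}, Proposition~\ref{prop-3}, the definition (\ref{j-non-r-flux}), and the row-sum-one property of $\bar{p}$, which is precisely the bookkeeping you carry out. Your write-up merely makes explicit the index swap in (\ref{theta-bar-eqn}) and the $x\in A$ versus $x\notin A$ case split that the paper leaves to the reader.
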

    \subsection{Reactive ensemble}
    \label{subsec-reactive}
    In this subsection, we turn to study the reactive ensemble $\Xi_r$ in (\ref{ensembles}).
First of all, from the definition (\ref{ensembles}), we know
    that the probability distribution of the initial state $x_\sigma$ of the ensemble $\Xi_r$
    coincides with the probability distribution $\mu_r$ of the end node in the ensemble
    $\Xi_{non}$. See~(\ref{mu-r}) and Proposition~\ref{prop-3} (an alternative derivation can be found in Appendix~\ref{appsec-1}).

    Now recall the definition of set $V^+$ in (\ref{v-minus-plus}),
    Proposition~\ref{prop-0}, and also notice that $A \subseteq V^+$.
In analogy to the previous subsection, our aim is to construct a Markov jump process on space $V^+\cup B$ whose path ensemble coincides
  with $\Xi_r$. To do so, we define the transition probabilities
  \begin{align}
\widetilde{p}(x\,|\,y) =
\left\{
  \begin{array}{ll}
				      \frac{p(x\,|\,y)q(x)}{q(y)}\,,  &
    \mbox{if}~y \in
    V^+ \cap A^c\,, \\
    \frac{p(x\,|\,y)\,q(x)}{\sum\limits_{z \in V}p(z\,|\,y)\,q(z)}\,, &
    \mbox{if}~y \in V^+\cap A\,,\\
    \delta_y(x)\,, & \mbox{if}~y \in B\,,
  \end{array}
  \right.
  \label{p-tilde}
\end{align}
where $x, y \in V^+ \cup B$, and $\delta_y$ is the delta function centered at
$y$. We have the following result (the proof is omitted since it is similar to
Proposition~\ref{prop-1}; Note that a similar result under the assumption of
ergodicity has been obtained in~\cite{Cameron2014}).
\begin{prop}
  The ensemble $\Xi_{r}$ can be generated from the trajectories of the Markov jump
  process on space $V^+\cup B$ which is defined by the transition
  probabilities $\widetilde{p}$ in (\ref{p-tilde}) and the initial
  distribution $\mu_r$ on $A$.
  \label{prop-4}
\end{prop}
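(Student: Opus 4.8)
The plan is to mirror the proof of Proposition~\ref{prop-1}, replacing the ``being nonreactive'' event by the ``being reactive'' event and the factor $1-q$ by the committor $q$. Since the distribution of the starting node $x_\sigma$ of $\Xi_r$ has already been identified with $\mu_r$ (see~(\ref{mu-r}) and Proposition~\ref{prop-3}), it remains only to show that, along a reactive trajectory, the one-step conditional transition probabilities of the original process coincide with $\widetilde{p}$ in~(\ref{p-tilde}). Concretely, I would fix a reactive trajectory $(x_\sigma, x_{\sigma+1}, \cdots, x_{\tau_B})$ with $x_k = y$, condition on the observed past together with the event that the segment is reactive, and compute $\mathbb{P}\big(x_{k+1}=x \,\big|\, (x_k=y,\cdots,x_\sigma),\,\mbox{being reactive}\big)$ for the three cases $y\in V^+\cap A^c$, $y\in V^+\cap A$, and $y\in B$ separately.

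For an interior node $y\in V^+\cap A^c$ (so that $y\in(A\cup B)^c$ by Proposition~\ref{prop-0}), I would write the conditional probability as a ratio and factor it using the Markov property, exactly as in Proposition~\ref{prop-1}:
\begin{align*}
\mathbb{P}\big(x_{k+1}=x \,\big|\, x_k=y,\,\mbox{reactive}\big)
= \frac{\mathbb{P}\big(x_{k+1}=x,\,x_k=y\big)\,\mathbb{P}\big(\mbox{reactive}\,\big|\,x_{k+1}=x\big)}
{\mathbb{P}\big(x_k=y\big)\,\mathbb{P}\big(\mbox{reactive}\,\big|\,x_k=y\big)}\,.
\end{align*}
The key point is that, from an interior node, ``being reactive'' is equivalent to the future reaching $B$ before $A$, whose probability starting from a node $z$ is exactly $\mathbb{P}(\tau_{B,z}<\tau_{A,z})=q(z)$ by~(\ref{committor-q}). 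Hence the two residual conditional factors equal $q(x)$ and $q(y)$, yielding $\frac{p(x\,|\,y)\,q(x)}{q(y)}=\widetilde{p}(x\,|\,y)$, which matches the first line of~(\ref{p-tilde}). The ratio is well defined because $y\in V^+$ forces $q(y)=\sum_{z\in V}p(z\,|\,y)q(z)>0$ by the definition~(\ref{v-minus-plus}) combined with the committor equation~(\ref{committor}).

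The launch node $y=x_\sigma\in V^+\cap A$ must be handled separately, since $q|_A=0$ makes the naive denominator $q(y)$ vanish. Here the conditioning event is that $y$ is the \emph{last} node of $A$ visited, i.e. the process leaves $A$ from $y$ and then reaches $B$ before returning to $A$; summing over the first step, the probability of this event is $\sum_{z\in V}p(z\,|\,y)q(z)$, so conditioning on it the first step lands at $x$ with probability $\frac{p(x\,|\,y)q(x)}{\sum_{z\in V}p(z\,|\,y)q(z)}=\widetilde{p}(x\,|\,y)$, matching the second line of~(\ref{p-tilde}); the terminal nodes $y\in B$ are absorbing, so $\widetilde{p}(\cdot\,|\,y)=\delta_y$ merely records the endpoint. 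A short check using~(\ref{committor}) confirms $\widetilde{p}$ has row sums one on $V^+\cup B$. I expect the main obstacle to be the careful justification of this Markov factorization in the presence of the global reactivity constraint: one must argue that, conditioned on the past up to time $k$, the residual event ``the remainder is reactive'' depends only on the future and reduces to a committor of the current state, and that this decomposition is compatible with $\sigma$ being the \emph{last} hitting time of $A$. It is precisely this last-hitting constraint that makes the interior case and the $y\in A$ case differ, contributing the extra normalization $\sum_{z\in V}p(z\,|\,y)q(z)$ at the launch node; getting this boundary bookkeeping right is the delicate part, while the remaining algebra is routine.
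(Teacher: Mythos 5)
Your proposal is correct and follows exactly the route the paper intends: the paper omits the proof of Proposition~\ref{prop-4}, stating only that it is analogous to Proposition~\ref{prop-1}, and your argument is precisely that analogy carried out — the same Markov factorization with $1-q$ replaced by $q$, plus the separate treatment of the launch nodes in $A$ (where the normalization $\sum_{z}p(z\,|\,y)q(z)$ appears) and the absorbing nodes in $B$. Your identification of the conditioning event at an interior node with the committor event $\{\tau_{B}<\tau_{A}\}$, and at a node of $A$ with the event of leaving $A$ and hitting $B$ before returning, matches the paper's reasoning for $\bar p$ in Proposition~\ref{prop-1} step for step.
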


For $x \in V$, let $\widetilde{\theta}(x)$ be the average number of times that node $x$ has been visited
by reactive trajectories, i.e.
\begin{align}
  \widetilde{\theta}(x) = \langle \sum_{l=\sigma}^{\tau_B} \mathbf{1}_x(x_l)\rangle\,,
  \label{theta-tilde}
\end{align}
where $(x_0, x_1, \cdots, x_\sigma, \cdots, x_{\tau_B})$ is a first passage
path and $\langle\cdot \rangle$ denotes the corresponding ensemble
average. Clearly $\widetilde{\theta}(x) = 0$ when $x \not\in V^+ \cup B$. Summing up $x
\in V$ in (\ref{theta-tilde}), we can obtain
  \begin{align}
    \sum_{x \in V} \widetilde{\theta}(x) = \langle \tau_B \rangle - \langle \sigma \rangle + 1\,.
    \label{theta-tilde-sum}
  \end{align}
  Similar to Proposition~\ref{prop-2}, we have
\begin{prop}
  Let $\widetilde{p}$ be the transition probability defined in (\ref{p-tilde}). We have $\widetilde{\theta}(x) = \mu_r(x)$ for $x \in A$,
  and
\begin{align}
  \widetilde{\theta}(x) = \sum_{y \in V^+} \widetilde{\theta}(y)
  \widetilde{p}(x\,|\,y)\,, \quad  \forall x \in A^c\,.
  \label{theta-tilde-2}
\end{align}
Especially, $\sum\limits_{x \in B} \widetilde{\theta}(x) = 1$.
\label{prop-5}
\end{prop}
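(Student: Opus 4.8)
The plan is to mirror the proof of Proposition~\ref{prop-2}, with Proposition~\ref{prop-4} and the absorbing set $B$ playing the roles that Proposition~\ref{prop-1} and the virtual node $0$ played in the nonreactive case. I would first dispose of the identity $\widetilde{\theta}(x)=\mu_r(x)$ for $x\in A$. By construction a reactive trajectory $(x_\sigma,\dots,x_{\tau_B})$ satisfies $x_\sigma\in A$, $x_l\in(A\cup B)^c$ for $\sigma<l<\tau_B$, and $x_{\tau_B}\in B$; since $A\cap B=\emptyset$, a node $x\in A$ can therefore be visited only at the initial index $l=\sigma$, so that
\begin{align*}
  \widetilde{\theta}(x)=\Big\langle\sum_{l=\sigma}^{\tau_B}\mathbf{1}_x(x_l)\Big\rangle=\langle\mathbf{1}_x(x_\sigma)\rangle=\mu_r(x)\,,\qquad x\in A\,,
\end{align*}
by the definition~(\ref{mu-r}) of $\mu_r$.

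For the recursion~(\ref{theta-tilde-2}) I would use Proposition~\ref{prop-4} to write the transition probability as a ratio of ensemble averages, exactly as in~(\ref{p-bar-path}),
\begin{align*}
  \widetilde{p}(x\,|\,y)=\frac{\big\langle\sum_{l=\sigma}^{\tau_B-1}\mathbf{1}_y(x_l)\mathbf{1}_x(x_{l+1})\big\rangle}{\big\langle\sum_{l=\sigma}^{\tau_B-1}\mathbf{1}_y(x_l)\big\rangle}\,,\qquad x,y\in V^+\,,
\end{align*}
the upper index $\tau_B-1$ reflecting that the reactive trajectory terminates upon hitting $B$. The key point is that the denominator equals $\widetilde{\theta}(y)$ for every $y\in V^+$: if $y\in V^+\cap A^c$, then $y\notin A$ and $y\notin B$ (recall $V^+\subseteq B^c$), so $y$ is visited only at indices $\sigma<l<\tau_B$ and the denominator coincides with $\langle\sum_{l=\sigma}^{\tau_B}\mathbf{1}_y(x_l)\rangle=\widetilde{\theta}(y)$; if $y\in V^+\cap A$, then $y$ is visited only at $l=\sigma$ and both the denominator and $\widetilde{\theta}(y)$ reduce to $\mu_r(y)$. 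Multiplying by $\widetilde{\theta}(y)$ and summing over $y\in V^+$ then gives
\begin{align*}
  \sum_{y\in V^+}\widetilde{\theta}(y)\widetilde{p}(x\,|\,y)=\sum_{y\in V^+}\Big\langle\sum_{l=\sigma}^{\tau_B-1}\mathbf{1}_y(x_l)\mathbf{1}_x(x_{l+1})\Big\rangle=\Big\langle\sum_{l=\sigma}^{\tau_B-1}\mathbf{1}_x(x_{l+1})\Big\rangle\,,
\end{align*}
where the inner sum over $y$ collapses because every reactive node $x_l$ with $\sigma\le l\le\tau_B-1$ lies in $V^+$ (Proposition~\ref{prop-0}, since $B$ is reachable from it). Shifting the index and noting that for $x\in A^c$ the term $l=\sigma$ contributes $\langle\mathbf{1}_x(x_\sigma)\rangle=0$ (as $x_\sigma\in A$), the right-hand side equals $\langle\sum_{l=\sigma}^{\tau_B}\mathbf{1}_x(x_l)\rangle=\widetilde{\theta}(x)$, which is~(\ref{theta-tilde-2}).

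Finally, for $\sum_{x\in B}\widetilde{\theta}(x)=1$ I would argue directly rather than by summing the recursion: since no index $\sigma\le l<\tau_B$ satisfies $x_l\in B$ whereas $x_{\tau_B}\in B$, each reactive trajectory hits $B$ exactly once, whence
\begin{align*}
  \sum_{x\in B}\widetilde{\theta}(x)=\Big\langle\sum_{l=\sigma}^{\tau_B}\mathbf{1}_B(x_l)\Big\rangle=\langle\mathbf{1}_B(x_{\tau_B})\rangle=1\,,
\end{align*}
using that $\tau_B<+\infty$ almost surely on the reactive ensemble (Assumption~\ref{assump-1}). I expect the main obstacle to be the boundary bookkeeping at the two endpoints $\sigma$ and $\tau_B$: unlike the nonreactive case, where the virtual node $0$ made the telescoping transparent, here one must treat separately the initial index (where $A$ is touched exactly once) and the absorbing set $B$ (where the path terminates), and in particular verify that the denominator of the ensemble-average form of $\widetilde{p}$ genuinely coincides with $\widetilde{\theta}(y)$ for both types of node $y\in V^+\cap A^c$ and $y\in V^+\cap A$.
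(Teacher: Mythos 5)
Your proposal is correct and follows essentially the same route as the paper: the identity $\widetilde{\theta}=\mu_r$ on $A$ from the fact that $A$ is touched only at the initial index $\sigma$, the recursion (\ref{theta-tilde-2}) by rewriting $\widetilde{p}$ as a ratio of reactive ensemble averages via Proposition~\ref{prop-4} (the analogue of (\ref{p-bar-path})) and telescoping, and the normalization $\sum_{x\in B}\widetilde{\theta}(x)=1$ by the direct observation that each reactive trajectory hits $B$ exactly once, at $\tau_B$. The paper leaves the middle step as ``analogous to Proposition~\ref{prop-2}''; your careful bookkeeping of the denominator for $y\in V^+\cap A^c$ versus $y\in V^+\cap A$ and of the boundary terms at $\sigma$ and $\tau_B$ is exactly the detail that analogy suppresses.
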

\begin{proof}
  Since nodes of set $A$ appear in reactive trajectories only as the starting
  state $x_\sigma$ with probability distribution $\mu_r$, we have $\widetilde{\theta} = \mu_r$ on $A$.
  Equation (\ref{theta-tilde-2}) can be proved in an analogous way to Proposition~\ref{prop-2}, by rewriting $\widetilde{p}$ as the ensemble
  average and applying Proposition~\ref{prop-4}. See (\ref{p-bar-path}).
  By the definition of the stopping time $\tau_B$, we know $x_l
  \not\in B$ for $l < \tau_B$. Therefore, from (\ref{theta-tilde}),
  \begin{align*}
    \widetilde{\theta}(x) = \langle \mathbf{1}_x(x_{\tau_B})\rangle , \quad x \in B\,.
  \end{align*}
  Summing up $x \in B$ and using the fact that $x_{\tau_B} \in B$, we conclude
  \begin{align*}
    \sum_{x \in B} \widetilde{\theta}(x) = \langle\sum_{x \in B}\mathbf{1}_x(x_{\tau_B})\rangle = 1\,.
  \end{align*}
\end{proof}

\begin{remark}
  Notice that (\ref{theta-tilde-2}) holds for node $x \in B$ as well. For numerical computation,
we can first compute $\widetilde{\theta}(x)$ for $x \in V^+\subseteq B^c$ by solving
the linear system (\ref{theta-tilde-2}), and then obtain the end node distribution
$\widetilde{\theta}(x)$ for each $x \in B$ from (\ref{theta-tilde-2}).
\end{remark}

For each edge $x\rightarrow y$, $x \in V^+$, $y \in V^+\cup B$, we define the reactive
probability flux as
\begin{align}
  \widetilde{J}(x\rightarrow y) = \widetilde{\theta}(x) \widetilde{p}(y\,|\,x) = \frac{\widetilde{\theta}(x) p(y\,|\,x) q(y)}{\sum\limits_{z \in
  V} p(z\,|\,x) q(z)}\,,
  \label{J-r-flux}
\end{align}
and set it to zero for other edges in $E$.
In analogy to (\ref{j-bar-path}), we have the ensemble average representation
\begin{align}
\widetilde{J}(x\rightarrow y) = \langle \sum_{l=\sigma}^{\tau_B-1}
\mathbf{1}_x(x_l)\mathbf{1}_y(x_{l+1}) \rangle\,,
\label{j-tilde-path}
\end{align}
where $(x_0, x_1, \cdots, x_\sigma, \cdots, x_{\tau_B})$ is a first passage
path and $\langle \cdot \rangle$ denotes the corresponding ensemble average.
Applying Proposition~\ref{prop-5}, we can obtain
\begin{corollary}
  Let $\widetilde{J}$ be the reactive probability flux defined in
  (\ref{J-r-flux}) and $\widetilde{\theta}$ be given in (\ref{theta-tilde}). We have
  \begin{align}
    \begin{split}
    &\sum\limits_{y \in V^+ \cup B} \widetilde{J}(x\rightarrow y) = \widetilde{\theta}(x)\,, \qquad
    \forall x \in V^+\,,\\
    &\sum\limits_{x \in V^+} \widetilde{J}(x\rightarrow y) = \widetilde{\theta}(y)\,, \qquad
    \forall y \in V^+ \cup B\,, \quad y \in A^c\,, \\
    &\sum_{x \in A} \sum_{y \in V^+ \cup B} \widetilde{J}(x\rightarrow y) = \sum_{x \in V^+} \sum_{y \in B} \widetilde{J}(x\rightarrow y) = 1\,.
  \end{split}
\end{align}
\end{corollary}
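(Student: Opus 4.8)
The plan is to read off all three assertions directly from the defining relation $\widetilde{J}(x\rightarrow y)=\widetilde{\theta}(x)\,\widetilde{p}(y\,|\,x)$ in (\ref{J-r-flux}), using only two ingredients: that $\widetilde{p}$ is a stochastic matrix with row sum one on $V^+\cup B$ (the same verification that was carried out for $\bar p$ after (\ref{p-bar}) via the committor equation (\ref{committor}) applies verbatim to $\widetilde{p}$, which is why Proposition~\ref{prop-4} may regard it as a Markov transition matrix), and the balance equation for $\widetilde{\theta}$ from Proposition~\ref{prop-5}. No path-ensemble argument via (\ref{j-tilde-path}) is needed; everything reduces to summing the product $\widetilde{\theta}\,\widetilde{p}$ over the appropriate index set.

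For the first identity I would fix $x\in V^+$ and sum the flux over its targets,
\begin{align*}
\sum_{y\in V^+\cup B}\widetilde{J}(x\rightarrow y)
=\widetilde{\theta}(x)\sum_{y\in V^+\cup B}\widetilde{p}(y\,|\,x)
=\widetilde{\theta}(x)\,,
\end{align*}
where the last step is precisely the row-sum-one property of $\widetilde{p}$. For the second identity I would instead fix a target $y\in(V^+\cup B)\cap A^c$ and sum over sources,
\begin{align*}
\sum_{x\in V^+}\widetilde{J}(x\rightarrow y)
=\sum_{x\in V^+}\widetilde{\theta}(x)\,\widetilde{p}(y\,|\,x)
=\widetilde{\theta}(y)\,,
\end{align*}
which is exactly the balance relation (\ref{theta-tilde-2}), valid for every $y\in A^c$ including $y\in B$ as recorded in the Remark following Proposition~\ref{prop-5}.

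The two normalizations in the third item then follow by summing the first two identities over the remaining index. Since $A\subseteq V^+$, specializing the first identity to each $x\in A$ and invoking $\widetilde{\theta}|_A=\mu_r$ from Proposition~\ref{prop-5} gives
\begin{align*}
\sum_{x\in A}\sum_{y\in V^+\cup B}\widetilde{J}(x\rightarrow y)
=\sum_{x\in A}\widetilde{\theta}(x)
=\sum_{x\in A}\mu_r(x)=1\,,
\end{align*}
the final equality being the normalization of $\mu_r$ recorded after (\ref{mu-r}). Likewise, since $B\subseteq A^c$, specializing the second identity to each $y\in B$ and summing yields $\sum_{x\in V^+}\sum_{y\in B}\widetilde{J}(x\rightarrow y)=\sum_{y\in B}\widetilde{\theta}(y)=1$ by the last claim of Proposition~\ref{prop-5}.

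The argument is essentially bookkeeping, so the only care required will be in keeping the index sets aligned: one must confirm the inclusions $A\subseteq V^+$ and $B\subseteq A^c$ so that the first and second identities can legitimately be specialized to $x\in A$ and to $y\in B$, and one must ensure that the balance equation (\ref{theta-tilde-2}) is actually available for targets lying in $B$ rather than only in $V^+$. The Remark after Proposition~\ref{prop-5} supplies exactly this last extension, so no genuine obstacle remains.
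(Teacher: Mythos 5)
Your proposal is correct and follows essentially the same route as the paper: the paper's proof is exactly the one-line observation that the corollary follows from the definition (\ref{J-r-flux}), Proposition~\ref{prop-5}, and the fact that $\widetilde{p}$ has row sums one, which is precisely the bookkeeping you carry out in detail. Your care with the index sets (the inclusions $A\subseteq V^+$, $B\subseteq A^c$, and the validity of (\ref{theta-tilde-2}) for targets in $B$) is a faithful expansion of what the paper leaves implicit.
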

\begin{proof}
  It follows directly by applying (\ref{J-r-flux}), Proposition~\ref{prop-5},
  and the fact that row sums of matrix $\widetilde{p}$ are one.
\end{proof}

\subsection{First passage path ensemble}
\label{subsec-fpp}
Based on the previous analysis of the nonreactive and reactive trajectories,
we study the whole first passage path ensemble in this subsection.

Given $x \in V$, we define $f(x)$ as the mean first hitting time of set $B$
for the discrete-time Markov jump process
starting from $x$, i.e. $f(x) = \langle \tau_{B,x}\rangle$. Using
Markovianity, we can write it as
\begin{align}
  f(x) = \frac{\langle \sum\limits_{l=0}^{\tau_B} \mathbf{1}_x(x_l)(\tau_B -
  l)\rangle}{\langle \sum\limits_{l=0}^{\tau_B} \mathbf{1}_x(x_l)\rangle}\,.
  \label{f-path-exp}
\end{align}
It is also well known that $f$ satisfies the equations
\begin{align}
  \begin{split}
    &\sum_{y \in V} p(y\,|\,x) f(y) - f(x) = -1\,, \quad x \in B^c\,, \\
    &f(x) = 0\,,\quad  x \in B\,.
\end{split}
\label{mfpt-eqn}
\end{align}
Accordingly, if the probability distribution of the initial state on set $A$ 
is $\mu$, the average length of the first passage path is then given by $\sum\limits_{x \in A} \mu(x) f(x)$.
Also, let $\theta(x)$ be the average number of times that node $x$ has been visited by the
first passage path with initial distribution $\mu$, i.e.
\begin{align}
  \theta(x) = \langle \sum_{l=0}^{\tau_B} \mathbf{1}_x(x_l)\,\rangle .
  \label{theta}
\end{align}
In analogy to Proposition~\ref{prop-2} and Proposition~\ref{prop-5}, we know it satisfies
\begin{align}
  \begin{split}
    &\theta(x) = \mu(x) \mathbf{1}_A(x) + \sum_{y \in B^c} \theta(y) p(x\,|\,y)\,,\quad x
  \in V\,, \\
  &\sum_{x \in B} \theta(x) = 1\,.
\end{split}
\label{theta-eqn}
\end{align}

Furthermore, taking into account the definitions of $\bar{\theta}$, $\bar{\theta}'$ and $\widetilde{\theta}$ in
(\ref{theta-bar}), (\ref{theta-bar-prime})
and (\ref{theta-tilde}), as well as relations
(\ref{theta-bar-sum}), (\ref{theta-tilde-sum}), we
can verify the following relations
\begin{align}
  \begin{split}
    &\theta(x) = \bar{\theta}(x) + \widetilde{\theta}(x) - \mu_r(x)
    \mathbf{1}_A(x) = \bar{\theta}'(x) + \widetilde{\theta}(x)\,, \quad \forall x \in V\,,\\
    &\sum_{x \in A} \mu(x) f(x) = \sum_{x \in B^c} \theta(x) = \sum_{x \in B^c} \bar{\theta}(x) + \sum_{x \in B^c}
    \widetilde{\theta}(x) - 1 =
    \sum_{x \in B^c} \bar{\theta}'(x) + \sum_{x \in B^c} \widetilde{\theta}(x) \,. \\
\end{split}
\label{f-theta-bar-theta-tilde}
\end{align}
In fact, the following explicit relations hold.
\begin{prop}
  For any $x \in V$,
  \begin{align}
    \bar{\theta}(x) = \theta(x) (1 - q(x)), \quad
    \widetilde{\theta}(x) = \theta(x) q(x) + \mu_r(x) \mathbf{1}_A(x) \,.
\label{theta-bar-mr-m}
\end{align}
\label{prop-6}
\end{prop}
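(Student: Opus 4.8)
The plan is to prove the first identity $\bar{\theta}(x) = \theta(x)(1-q(x))$ directly and then obtain the second one for free. Indeed, the splitting relation already recorded in the first line of (\ref{f-theta-bar-theta-tilde}), namely $\theta(x) = \bar{\theta}(x) + \widetilde{\theta}(x) - \mu_r(x)\mathbf{1}_A(x)$, gives $\widetilde{\theta}(x) = \theta(x) - \bar{\theta}(x) + \mu_r(x)\mathbf{1}_A(x)$; substituting $\bar{\theta}(x) = \theta(x)(1-q(x))$ immediately yields $\widetilde{\theta}(x) = \theta(x)q(x) + \mu_r(x)\mathbf{1}_A(x)$. Thus the whole proposition reduces to the first formula.

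For the first formula I would start from the path-ensemble definition (\ref{theta-bar}). Since $0 \le \sigma < \tau_B$ along every first passage path, the cut-off at $\sigma$ can be lifted into an indicator and the sum extended over the whole path:
\begin{align*}
  \bar{\theta}(x) = \Big\langle \sum_{l=0}^{\sigma}\mathbf{1}_x(x_l)\Big\rangle
  = \Big\langle \sum_{l=0}^{\tau_B}\mathbf{1}_x(x_l)\,\mathbf{1}_{\{l \le \sigma\}}\Big\rangle\,.
\end{align*}
The event $\{l\le\sigma\}$ means precisely that the path still visits $A$ at some time in $[l,\tau_B)$. Conditioned on the present state $x_l = x$ (with necessarily $x\in B^c$ whenever the summand is nonzero), this is an event depending only on the future, so by the Markov property its conditional probability depends only on $x$ and equals the probability that a chain launched at $x$ reaches $A$ no later than $B$. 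Because $A\cap B=\emptyset$, this probability is $\mathbb{P}(\tau_{A,x} < \tau_{B,x}) = 1 - q(x)$; for $x\in A$ it is trivially $1 = 1-q(x)$, and for $x\in B$ the summand never appears. Replacing $\mathbf{1}_{\{l\le\sigma\}}$ by its conditional expectation $1-q(x)$ on $\{x_l=x\}$ and pulling out the constant factor then gives
\begin{align*}
  \bar{\theta}(x) = (1-q(x))\,\Big\langle\sum_{l=0}^{\tau_B}\mathbf{1}_x(x_l)\Big\rangle = (1-q(x))\,\theta(x)\,,
\end{align*}
which is the first identity; the second follows as explained above.

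The step needing the most care is the replacement of $\mathbf{1}_{\{l\le\sigma\}}$ by $1-q(x)$. This is exactly the ``conditioning on being nonreactive'' argument already used in the proof of Proposition~\ref{prop-1}: one must check that, conditioned on $x_l=x$, the event ``this visit lies in the nonreactive segment'' is measurable with respect to the future and has the claimed probability $1-q(x)$ uniformly in $l$ and in the past, which follows from the strong Markov property and the characterization of $q$ in (\ref{committor}). The boundary cases $x\in A$ (factor $1$) and $x\in B$ or $x\in B^c\setminus V^-$ (where $q(x)=1$ and both sides vanish, consistent with $\bar{\theta}$ being supported on $V^-$) should be verified separately for consistency.

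As an alternative I would note a purely algebraic route: set $g(x):=\theta(x)(1-q(x))$ and verify that $g$ solves the same linear fixed-point equation (\ref{theta-bar-eqn}) that characterizes $\bar{\theta}$ on $V^-$. Expanding (\ref{theta-eqn}) for $\theta$, multiplying by $1-q(x)$, and using the definition of $\bar{p}$ in (\ref{p-bar}), the only discrepancy is a term of the form $(1-q(x))\sum_{y\in B^c\setminus V^-}\theta(y)\,p(x\,|\,y)$ for $x\in V^-$. This vanishes because, for $x\in V^-$, reachability of $A$ from $x$ together with $p(x\,|\,y)>0$ would make $A$ reachable from $y$, contradicting $y\notin V^-$ (Proposition~\ref{prop-0}); hence $p(x\,|\,y)=0$ for all such $y$. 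Uniqueness of the solution to (\ref{theta-bar-eqn}) then forces $\bar{\theta}=g$, recovering the same conclusion without the probabilistic conditioning.
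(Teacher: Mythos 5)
Your proof is correct and is essentially the paper's own argument: both rest on identifying, via the Markov property, the conditional probability that a visit to $x$ belongs to the nonreactive segment (the event $\{l\le\sigma\}$, conditioned on $x_l=x$) with $\mathbb{P}(\tau_{A,x}<\tau_{B,x})=1-q(x)$, which gives $\bar{\theta}(x)=\theta(x)(1-q(x))$, and both then read off the formula for $\widetilde{\theta}$ from the splitting relation (\ref{f-theta-bar-theta-tilde}). Your optional algebraic route (verifying that $\theta(x)(1-q(x))$ solves (\ref{theta-bar-eqn}), with the correct observation that $p(x\,|\,y)=0$ for $y\in B^c\setminus V^-$, $x\in V^-$) is a genuinely different check, but it silently relies on uniqueness of solutions to (\ref{theta-bar-eqn}), which the paper never establishes and which would itself require an argument using the absorbing structure of $\bar{p}$ on $V^-\cup\{0\}$.
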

\begin{proof}
  Clearly $\bar{\theta}(x) = \widetilde{\theta}(x) = 0$ if $\theta(x)=0$. Hence we only need to consider the case
  when $\theta(x) > 0$.
  Using the Markov property, we can compute the probability
  $\mathbb{P}(\tau_{A,x} < \tau_{B, x})$ from the first passage ensemble.
  Specifically, let $(x_0, x_1, \cdots, x_{\tau_B})$ be a first passage path.
Conditioning on $x_l = x$ where $0 \le l \le \tau_B$, the event
$\{\tau_{A,x} < \tau_{B,x}\}$ is actually equivalent to $\{l \le \sigma\}$.   Therefore,
  \begin{align*}
    1 - q(x) =& \mathbb{P}(\tau_{A,x} < \tau_{B,x}) \\
    =& \mathbb{P}\Big( l \le \sigma \,\Big|\, x_l = x, 0 \le l \le \tau_B\Big) \\
    =& \frac{\mathbb{P}\Big( l \le \sigma \,, x_l = x, 0 \le l \le
  \tau_B\Big)}
  {\mathbb{P}\Big(x_l = x, 0 \le l \le \tau_B\Big)}
  = \frac{\big\langle \sum\limits_{l=0}^{\sigma}
\mathbf{1}_x(x_l)\big\rangle}{\big\langle \sum\limits_{l=0}^{\tau_B} \mathbf{1}_x(x_l)\big\rangle}
= \frac{\bar{\theta}(x)}{\theta(x)}\,,
  \end{align*}
  which implies $\bar{\theta}(x) = \theta(x)\big(1-q(x)\big)$.
  The expression of $\widetilde{\theta}(\cdot)$ follows from (\ref{f-theta-bar-theta-tilde}).
\end{proof}

We also introduce the probability flux for edge $x\rightarrow y$, which is defined as
\begin{align}
  J(x\rightarrow y) = \theta(x) p(y\,|\,x)\,.
  \label{j-def}
\end{align}
As in (\ref{j-bar-path}) and (\ref{j-tilde-path}), we have
\begin{align}
J(x\rightarrow y) = \langle \sum_{l=0}^{\tau_B-1} \mathbf{1}_x(x_l)\mathbf{1}_y(x_{l+1}) \rangle\,.
\label{j-flux-path}
\end{align}
And the following result is straightforward.
\begin{prop}
  For $x \in B^c$, we have $\theta(x) p(y\,|\,x) = \bar{\theta}(x) \bar{p}(y\,|\,x) +
  \widetilde{\theta}(x) \widetilde{p}(y\,|\,x)\,$, or equivalently,
  \begin{align}
    J(x\rightarrow y) = \bar{J}(x\rightarrow y) + \widetilde{J}(x \rightarrow y)\,.
    \label{J-flux}
  \end{align}
\end{prop}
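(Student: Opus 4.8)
The plan is to read the decomposition off directly from the path-ensemble representations of the three fluxes, for which the algebra collapses to a statement about complementary summation ranges. First I would recall the three ensemble-average formulas, all taken over the same first passage path $(x_0, x_1, \cdots, x_\sigma, \cdots, x_{\tau_B})$ with $x_0 \sim \mu$: by (\ref{j-bar-path}), (\ref{j-tilde-path}) and (\ref{j-flux-path}),
\begin{align*}
  \bar{J}(x\rightarrow y) &= \langle \sum_{l=0}^{\sigma} \mathbf{1}_x(x_l)\mathbf{1}_y(x_{l+1})\rangle\,, \qquad
  \widetilde{J}(x\rightarrow y) = \langle \sum_{l=\sigma}^{\tau_B-1} \mathbf{1}_x(x_l)\mathbf{1}_y(x_{l+1})\rangle\,, \\
  J(x\rightarrow y) &= \langle \sum_{l=0}^{\tau_B-1} \mathbf{1}_x(x_l)\mathbf{1}_y(x_{l+1})\rangle\,.
\end{align*}
The key observation is that the two partial sums tile exactly the index range of $J$. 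In the representation of $\bar{J}$ we use the convention $x_{\sigma+1} = 0$ introduced for the nonreactive process; since the target node $y$ of an edge $x\rightarrow y$ is a genuine node of $V^-\subseteq B^c$, we have $\mathbf{1}_y(x_{\sigma+1}) = \mathbf{1}_y(0) = 0$, so the boundary term at $l=\sigma$ in $\bar{J}$ vanishes. Hence $\bar{J}$ effectively runs over $l = 0, \cdots, \sigma-1$ while $\widetilde{J}$ runs over $l = \sigma, \cdots, \tau_B-1$; these ranges are disjoint and their union is $\{0, 1, \cdots, \tau_B-1\}$ (this remains correct in the degenerate case $\sigma = 0$, and $\sigma < \tau_B$ always holds because $x_\sigma \in A$ and $x_{\tau_B} \in B$ lie in disjoint sets). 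Adding the two representations and comparing with that of $J$ yields $J(x\rightarrow y) = \bar{J}(x\rightarrow y) + \widetilde{J}(x\rightarrow y)$, which is (\ref{J-flux}); the equivalent form $\theta(x)p(y\,|\,x) = \bar{\theta}(x)\bar{p}(y\,|\,x) + \widetilde{\theta}(x)\widetilde{p}(y\,|\,x)$ then follows from the definitions (\ref{j-non-r-flux}), (\ref{J-r-flux}) and (\ref{j-def}).

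As an independent algebraic cross-check one can substitute Proposition~\ref{prop-6} together with the transition probabilities (\ref{p-bar}) and (\ref{p-tilde}). For $x \in (A\cup B)^c$ one finds $\bar{\theta}(x)\bar{p}(y\,|\,x) = \theta(x)p(y\,|\,x)(1-q(y))$ and $\widetilde{\theta}(x)\widetilde{p}(y\,|\,x) = \theta(x)p(y\,|\,x)q(y)$, whose sum is $\theta(x)p(y\,|\,x)$ via $(1-q(y)) + q(y) = 1$. For $x \in A$ one uses $q(x)=0$, so that $\widetilde{\theta}(x) = \mu_r(x)$, and rewrites $\mu_r(x) = \theta(x)\big[\sum_{z\in V} p(z\,|\,x)q(z)\big]$ by Proposition~\ref{prop-3}; the normalizing denominator in $\widetilde{p}$ for $y\in V^+\cap A$ then cancels and the same identity results.

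I expect the only genuine care to lie in the bookkeeping of the supports of the fluxes — namely that $\bar{J}$ is supported on edges within $V^-$, that $\widetilde{J}$ is supported on edges from $V^+$ into $V^+\cup B$, and that the "missing" contributions vanish automatically because $1-q(y)=0$ when $y \notin V^-$ and $q(y)=0$ when $y \notin V^+\cup B$ (recall $V^-\cup V^+ = B^c$). Once this is checked, the path-ensemble argument above is the cleanest route, with the single boundary index $l=\sigma$ being the one point that must be treated explicitly.
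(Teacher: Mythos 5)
Your proposal is correct and follows exactly the paper's own route: the paper's proof simply states that (\ref{J-flux}) ``follows directly from the ensemble average representations of fluxes in (\ref{j-flux-path}), (\ref{j-bar-path}), (\ref{j-tilde-path})'', which is precisely your tiling-of-index-ranges argument with the boundary term at $l=\sigma$ killed by the convention $x_{\sigma+1}=0$. Your explicit treatment of that boundary term, the degenerate case $\sigma=0$, the supports of the fluxes, and the independent algebraic cross-check via Proposition~\ref{prop-6} are all sound elaborations of details the paper leaves implicit.
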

\begin{proof}
  (\ref{J-flux}) follows directly from the ensemble average representations of
  fluxes in (\ref{j-flux-path}), (\ref{j-bar-path}), (\ref{j-tilde-path}).
\end{proof}

\section{Analysis of the first passage path ensemble : continuous-time jump processes}
\label{sec-ctjp}
In this section, we turn to study the first passage path ensemble of continuous-time
jump processes with general waiting time distributions. After introducing the
process and some notations, we will derive a discrete-time Markov jump process which encodes the dynamical information of the original continuous-time
process. Applying the analysis in Section~\ref{sec-reactive} to this discrete-time Markov jump process, we are able to analyze the first
passage path ensemble of the continuous-time jump process.
\subsection{Preparations}
\label{subsec-continuous-to-discrete}
We consider a continuous-time jump process on network $G=(V,E)$.
We will follow the derivations in~\cite{non_poisson_jump} and some extra notations are needed in order to introduce the process.
A related study on the continuous-time jump processes can also be found in~\cite{path_mem_carse_graining}.

Following Subsection~\ref{subsec-prepare},
let us first assume that node $x \not\in \mathcal{T}$ and consequently $\mathcal{N}_x
\not=\emptyset$.
From node $x$, the system may jump to one of the nodes in
$\mathcal{N}_x$ after staying at $x$ for a certain duration of time. We assume
that the jump events at each node $x$ are independent of each other, and
denote the probability density of the waiting
time $t$ as $\psi(t\,|\,x\rightarrow y)$, conditioning on that the system jumps from $x$ to another node $y
\in \mathcal{N}_x$. As a probability density function, it satisfies $\int_0^{+\infty} \psi(t\,|\,x\rightarrow y)\, dt = 1$.
    Let $a(t\,|\,x)$ be the probability that the system stays at
node $x \in V$ for a time longer than $t$ before it leaves.
Also denote the probability density that the system jumps from node $x$ to $y$
at time $t$ by $b(t\,,x\rightarrow y)$. Since we have assumed that the jump events to different
  nodes are independent of each other, it is straightforward to verify that
\begin{align}
  \begin{split}
  a(t\,|\,x) =& \prod_{y \in \mathcal{N}_x} \int_t^{+\infty} \psi(u\,|\,x\rightarrow y)\, du\,,\\
  b(t\,,x\rightarrow y) =& \Big(\prod_{z \in \mathcal{N}_x, z \neq y}
    \int_t^{+\infty} \psi(u\,|\,x\rightarrow z) du\Big)
    \psi(t\,|\,x\rightarrow y)\,, \quad \forall\, y \in \mathcal{N}_x\,,
\end{split}
\label{a-b-formula}
\end{align}
and they satisfy the equation
\begin{align}
a(t\,|\,x) + \sum\limits_{y \in \mathcal{N}_x} \int_0^t b(u\,,x \rightarrow
y)\, du = 1\,,
\qquad \forall\,  t \ge 0\,. \label{a-b-identity}
\end{align}
We also know that the average time the system will stay at a node $x \in V$ is
\begin{align}
\kappa(x) = \int_0^{+\infty} a(u\,|\,x)\, du\,.
\label{kappa}
\end{align}

Let $p(y\,|\,x)$ be the probability that the system jumps from node $x$ to $y$,
    regardless of when the jump event occurs, then we have
    \begin{align}
      p(y\,|\,x) = \int_0^{+\infty} b(u\,,x\rightarrow y)\, du\,, \quad y \in
      \mathcal{N}_x\,,
    \label{p-b-formula}
  \end{align}
  and $p(y\,|\,x) = 0$, for $y \not\in \mathcal{N}_x$.
From (\ref{a-b-formula}) it follows $\lim\limits_{t\rightarrow +\infty} a(t\,|\,x) = 0$
  and, taking the limit $t \rightarrow +\infty$ in (\ref{a-b-identity}), we can obtain
\begin{align}
  \sum\limits_{y \in V} p(y\,|\,x) = \sum\limits_{y \in \mathcal{N}_x}
  p(y\,|\,x) = 1\,, \qquad \forall x \not\in \mathcal{T}\,.
\label{p-sum-1}
\end{align}

We also assume that the process will terminate once it reaches one of the sink nodes.
Correspondingly, for $x \in \mathcal{T}$, we set $p(y\,|\,x) = \delta_{x}(y)$, $y \in V$,
and therefore the equality $\sum_{y \in V} p(y\,|\,x) = 1$ is still satisfied.
In other words, $p$ is a probability matrix and therefore defines a discrete-time Markov
jump process on $G$. As further presented in Appendix~\ref{appsec-2}, this
discrete-time Markov jump process encodes the dynamical information
of the original continuous-time process.  And it will be useful in the following Subsection~\ref{subsec-continuous-analysis}.

    For general probability densities $\psi$, numerical integrations are
    needed in
    order to compute transition probabilities $p(y\,|\,x)$ from (\ref{a-b-formula}) and (\ref{p-b-formula}).
    In the following, we discuss three special cases~\cite{burst_mem,steady_state_mean_recurrence2015}
    when the analytical expressions of
    $p(y\,|\,x)$ can be obtained (see Figure~\ref{fig-dist-cmp}).
      \begin{enumerate}
	\item
	  Exponential distributions.
	The probability densities are
\begin{align}
\psi(t\,|\,x\rightarrow y) = \lambda_{xy} e^{-\lambda_{xy}t}\,,
\label{exp-dist}
\end{align}
for some $\lambda_{xy} > 0$, where $y \in \mathcal{N}_x$. It corresponds to
the continuous-time Markov jump process
and we can define the generator matrix $L$ of the process
whose entries are
\begin{align}
  L_{xy} = \left\{
    \begin{array}{cl}
      \lambda_{xy}>0\,, & y \in \mathcal{N}_x\,, \\
	     0\,, & y \not\in \mathcal{N}_x, \,y \neq x \,, \\
-\sum\limits_{z\neq x}\lambda_{xz}\,, & y=x\,,
    \end{array}
    \right.
\end{align}
for $x \not\in \mathcal{T}$,
and $L_{xy} = 0$, $\forall y \in V$, when $x \in \mathcal{T}$. From
(\ref{a-b-formula}), (\ref{p-b-formula}) we can obtain that, for $x \not\in
\mathcal{T}$,
    \begin{align}
      \begin{split}
	&a(t\,|\,x) = e^{-(\sum_{z\in \mathcal{N}_x} \lambda_{xz}) t}\,,  \quad
 b(t\,,x\rightarrow y) = \lambda_{xy}e^{-(\sum_{z\in \mathcal{N}_x} \lambda_{xz}) t} \,, \quad t \ge
 0\,, \\
 & \kappa(x) = \frac{1}{\sum_{z\in \mathcal{N}_x} \lambda_{xz}}\,, \quad
 p(y\,|\,x) = \frac{\lambda_{xy}}{\sum_{z\in \mathcal{N}_x}
 \lambda_{xz}}=-\frac{\lambda_{xy}}{L_{xx}}\,, \quad y \in \mathcal{N}_x\,.
      \end{split}
      \label{exp-formulas}
  \end{align}
	\item
	  Weibull distributions.
	  In this case, we assume
	  \begin{align}
	    \psi(t\,|\,x\rightarrow y) = k\lambda_{xy}
	  \big(\lambda_{xy} t\big)^{k-1} e^{-(\lambda_{xy}t)^k}\,,
	  \label{weibull}
	\end{align}
	  for some $k > 0$ and $\lambda_{xy} > 0$, where $y \in \mathcal{N}_x$.
From (\ref{a-b-formula}), (\ref{p-b-formula}) we can obtain that, for $x \not\in
\mathcal{T}$,
    \begin{align}
      \begin{split}
	&a(t\,|\,x) = \exp\Big(-t^k\sum_{z\in \mathcal{N}_x}
	\lambda_{xz}^k\Big)\,,\\
	& b(t\,,x\rightarrow y) = k\lambda_{xy}\big(\lambda_{xy}
      t\big)^{k-1}\exp\Big(-t^k\sum_{z\in \mathcal{N}_x} \lambda_{xz}^k\Big) \,, \\
 & \kappa(x) = \frac{1}{k \big(\sum_{z \in \mathcal{N}_x}
 \lambda_{xz}^k\big)^{\frac{1}{k}}} \Gamma\Big(\frac{1}{k}\Big)\,, \quad
 p(y\,|\,x) = \frac{\lambda^k_{xy}}{\sum_{z\in \mathcal{N}_x}
 \lambda_{xz}^k}\,, \quad y \in \mathcal{N}_x\,,
      \end{split}
      \label{weibull-formulas}
    \end{align}
    where $\Gamma(z)=\int_0^{+\infty} x^{z-1}e^{-x} dx$ is the Gamma function.  Clearly, we recover the exponential distribution when $k=1$.
	\item
	  Power law distributions.
	  In this case, we assume
	  \begin{align}
	    \psi(t\,|\,x\rightarrow y) = \alpha_{xy}  ( 1+
	    t)^{-(\alpha_{xy}+1)}\,,
	  \label{power-law}
	\end{align}
	  for some $\alpha_{xy} > 1$, where $y \in \mathcal{N}_x$.
From (\ref{a-b-formula}), (\ref{p-b-formula}) we can obtain that, for $x \not\in
\mathcal{T}$,
    \begin{align}
      \begin{split}
	&a(t\,|\,x) = (1+t)^{-\sum_{z\in \mathcal{N}_x}\alpha_{xz}}\,,\\
	& b(t\,,x\rightarrow y) = \alpha_{xy} (1+t)^{-\big(1 + \sum_{z\in
    \mathcal{N}_x} \alpha_{xz}\big) }\,, \quad t \ge 0\,, \\
    & \kappa(x) = \frac{1}{\big(\sum_{z \in \mathcal{N}_x}\alpha_{xz}\big) - 1}\,,
\quad
p(y\,|\,x) = \frac{\alpha_{xy}}{\sum_{z\in \mathcal{N}_x} \alpha_{xz}}\,, \quad y \in \mathcal{N}_x\,.
      \end{split}
      \label{power-formulas}
    \end{align}
    \end{enumerate}
\begin{figure}[htp]
\centering
\includegraphics[width=10cm]{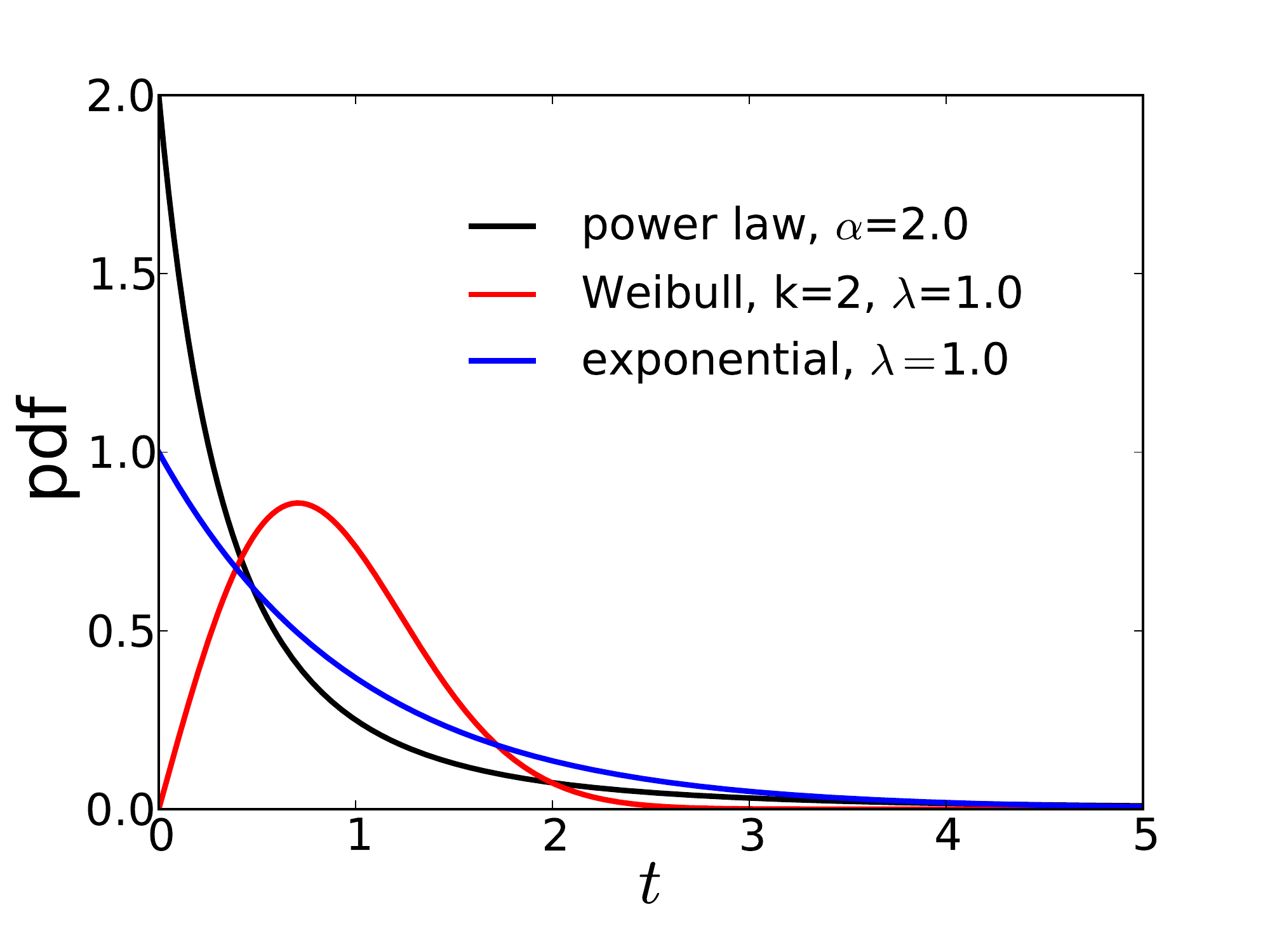}
\caption{Probability density functions $\psi$ of the exponential
distribution, the Weibull distribution as well as the power law
distribution. The parameters are used in (\ref{exp-dist}), (\ref{weibull}) and (\ref{power-law}). \label{fig-dist-cmp}}
\end{figure}
\subsection{Path ensemble analysis}
\label{subsec-continuous-analysis}
In this subsection, we study the path ensemble of the continuous-time
jump process by applying the analysis in Section~\ref{sec-reactive} to the
discrete-time Markov jump process obtained in the previous Subsection~\ref{subsec-continuous-to-discrete}.
The structure of the contents is the same as in Section~\ref{sec-reactive}
and therefore only the necessary steps in order to adapt the analysis to the continuous setting
will be summarized.

\subsubsection*{Nonreactive ensemble}
For the nonreactive ensemble,
denoting $x_s$ as the continuous-time jump process introduced in Subsection~\ref{subsec-continuous-to-discrete}
and following Subsection~\ref{subsec-nonreactive}, we define
\begin{align}
  \bar{T}(x) = \langle \int_0^{t_\sigma} \mathbf{1}_x(x_s) ds\rangle\,,\quad x
  \in V^-\,,
  \label{t-bar}
\end{align}
to be the average amount of time that the nonreactive trajectories spend at
node $x$. Notice that we denote $t_l$ as the time when the $l$th jump occurs and
especially $t_\sigma$ is the last hitting time of set $A$. Recall (\ref{kappa}), we have
\begin{align}
  \bar{T}(x) =& \langle \sum_{l=0}^{\sigma-1} \int_{t_l}^{t_{l+1}}
  \mathbf{1}_x(x_s) ds\rangle \notag \\
  =& \langle \sum_{l=0}^{\sigma-1} (t_{l+1} - t_l)
  \mathbf{1}_x(x_l) \rangle = \kappa(x) \bar{\theta}'(x)\,,
  \label{t-bar-exp}
\end{align}
and therefore the average total amount of time of the nonreactive trajectories is
\begin{align}
  \langle t_{\sigma}\rangle = \sum_{x \in V^-} \bar{T}(x) = \sum_{x \in V^-}
  \kappa(x) \bar{\theta}'(x)\,.
\end{align}
\subsubsection*{Reactive ensemble}
For the reactive ensemble, similar to (\ref{t-bar}), we define
\begin{align}
  \widetilde{T}(x) = \langle \int_{t_\sigma}^{t_{\tau_B}} \mathbf{1}_x(x_s)\, ds\rangle\,,\quad x \in V^+\,,
  \label{t-tilde}
\end{align}
which is the average amount of time that the reactive trajectories spend at
node $x$. From (\ref{kappa}), we have
\begin{align}
  \widetilde{T}(x) =& \langle \sum_{l=\sigma}^{\tau_B - 1} \int_{t_l}^{t_{l+1}}
  \mathbf{1}_x(x_s) ds\rangle \notag \\
  =& \langle \sum_{l=\sigma}^{\tau_B-1} (t_{l+1} - t_l)
  \mathbf{1}_x(x_l) \rangle = \kappa(x) \widetilde{\theta}(x)\,, \quad x \in
  V^+\,.
  \label{t-tilde-exp}
\end{align}
and therefore the average total amount of time of the reactive trajectories is
\begin{align}
  \langle t_{\tau_B} - t_\sigma \rangle = \sum_{x \in V^+} \widetilde{T}(x) = \sum_{x \in V^+}
  \kappa(x) \widetilde{\theta}(x)\,.
\end{align}
\subsubsection*{First passage path ensemble}
For the first passage path ensemble, following Subsection~\ref{subsec-fpp}, we define
\begin{align}
  T(x) = \langle \int_{0}^{t_{\tau_B}} \mathbf{1}_x(x_s) ds\rangle\,,\quad x \in V\,,
  \label{t-total}
\end{align}
which is the average amount of time that the first passage trajectories spend at node $x$.
Similarly as above, we can obtain
\begin{align}
  T(x) =\kappa(x) \theta(x) = \bar{T}(x) + \widetilde{T}(x)\,,
  \label{t-exp}
\end{align}
where $\bar{T}, \widetilde{T}$ are defined in (\ref{t-bar}), (\ref{t-tilde}),
and the average total amount of time of the first passage trajectories is
\begin{align}
  \langle t_{\tau_B} \rangle = \sum_{x \in B^c} T(x) = \sum_{x \in B^c} \kappa(x) \theta(x)\,.
\end{align}

\section{Ergodic case : connections with TPT}
\label{sec-ergodic}
In this section, we consider the special case when either the original process
or, in the continuous-time case, the discrete-time Markov jump process
obtained in Section~\ref{sec-ctjp} is both irreducible and aperiodic. 
In this case, statistics of the transition (reactive) segments have been studied in TPT 
by embedding these segments into an infinitely long stationary trajectory. Our
key observation is that in fact they can be embedded into the first passage
path ensemble with certain initial distribution $\mu$ and therefore can be
studied by applying the analysis in the previous sections. 
This approach elucidates the relations between the study of the first passage paths and the
study of the transition paths in TPT.

To proceed, we first notice that in this case Assumption~\ref{assump-1} in
Section~\ref{sec-reactive} is satisfied and indeed we have
$V^-=V^+= B^c$ in (\ref{v-minus-plus}), $\mathcal{T}=\emptyset$.
The discrete-time jump process is ergodic and we assume its unique invariant measure is
$m$ such that $\sum_{x \in V}m(x) = 1$~\cite{norris-mc,mc-mixing}.
We also introduce the time reversed process defined by
the transition probabilities $p^{-}(y\,|\,x) = \frac{m(y)p(x\,|\,y)}{m(x)}$,
$x,y \in V$ and the
backward committor function $q^{-}$ which satisfies the equation
    \begin{align}
      \begin{split}
	&\sum_{y \in V} p^{-}(y\,|\,x) q^{-}(y) = q^{-}(x)\, , \quad \forall x \in (A\cup B)^c\,, \\
      &q^{-}|_A=1, \quad q^{-}|_B=0\,.
    \end{split}
    \label{committor-backward-ergodic}
    \end{align}
It is known that $q^-(x)$ equals to the probability that, being at $x \in V$, the
process came from set $A$ rather than from set $B$~\cite{tpt_jump,tpt_eric2006}.

We can further assume that there is an infinitely long trajectory $x_0,
x_1, \cdots, x_l, \cdots$ of the discrete-time jump process, where $x_0 \in
V$, $x_0 \sim m$. To make a connection with the general case studied in
the previous subsections, we introduce the stopping times
$\tau^{(0)}_A = \tau^{(0)}_B \equiv 0$, and
\begin{align}
  \begin{split}
  &\tau^{(k+1)}_A = \min\Big\{\,l ~\big|~ l \ge \tau^{(k)}_B, \, x_l \in A\Big\}\,,\\
  &
\tau^{(k+1)}_B = \min\Big\{\,l ~\big|~ l \ge \tau^{(k+1)}_A, \, x_l \in B\Big\}\,,
\end{split}
\label{tau-a-b-k}
\end{align}
where $k\ge 0$. Similarly as in (\ref{sigma-def}) and (\ref{ensembles}), we define
\begin{align}
  \sigma^{(k)} = \max\Big\{\,l~\big|~ x_l \in A,\,\tau^{(k)}_A \le l <
  \tau^{(k)}_B\Big\}\,,\quad k \ge 1\,,
\end{align}
and consider the ensembles of the equilibrium nonreactive and reactive segments
\begin{align}
  \begin{split}
  &\Xi_{non} = \Big\{(x_{j}, x_{j+1}, \cdots, x_{\sigma^{(k)}})~\big|~ j =
  \tau^{(k)}_{A}, ~k \ge 1\Big\}\,,\\
  &\Xi_r = \Big\{(x_{\sigma^{(k)}}, x_{\sigma^{(k)}+1}, \cdots, x_{j})~\big|~ j
  =\tau^{(k)}_{B}, ~k \ge 1 \Big\}\,,
\end{split}
      \label{ensembles-ergodic}
    \end{align}
    constructed from the infinitely long trajectory $x_0, x_1, \cdots$.
Using the definition of $q^{-}$, we can obtain the expressions of the various
quantities related to ensembles (\ref{ensembles-ergodic}) in the ergodic setting.
\begin{prop}
  Consider the path ensembles $\Xi_{non}$ and $\Xi_{r}$ in (\ref{ensembles-ergodic}). 
  \begin{enumerate}
    \item
      For the normalization constant, we have 
\begin{align}
  Z = \sum_{x \in A}\sum_{y \in V} m(y) (1 - q^-(y))p (x\,|\,y) = \sum_{x \in
  B}\sum_{y \in V} m(y) q^{-}(y)p(x\,|\,y)\,.
  \label{z-ergodic}
\end{align}
\item
  The initial distribution $\mu$ (on set $A$) of the ensemble $\Xi_{non}$ satisfies
\begin{align}
  \mu(x) = \frac{1}{Z} \sum_{y \in V} m(y) \big(1 - q^{-}(y)\big) p(x\,|\,y)\,, \quad
  x \in A\,.
  \label{mu-ergodic}
\end{align}
\item
Let $\theta$ be defined in (\ref{theta}), we have
  \begin{align}
    \theta(x) = \left\{
      \begin{array}{ll}
	\frac{1}{Z} m(x) q^{-}(x)\,, & x \in B^c\\
				     &\\
	\frac{1}{Z} \sum\limits_{y \in V}m(y)q^{-}(y) p(x\,|\,y)\,, & x \in B\,.
      \end{array}
    \right.
    \label{theta-ergodic}
  \end{align}
\item
  Let $\mu_r$ be the initial distribution of $\Xi_{r}$, and $\bar{\theta}$, $\widetilde{\theta}$ be defined in
  (\ref{theta-bar}), (\ref{theta-tilde}) respectively. We have
  \begin{align}
    \begin{split}
	   \mu_r(x) &= \frac{1}{Z} m(x) \Big[\sum_{z \in V} p(z\,|\,x)
    q(z)\Big]\,,\quad \forall x \in A\,,\\
    \bar{\theta}(x) &= \frac{1}{Z} m(x) q^{-}(x) (1 - q(x)) \,,\quad \forall x \in V\,, \\
    \widetilde{\theta}(x) &=
   \left\{
     \begin{array}{ll}
    \frac{1}{Z} m(x)q^{-}(x) q(x) +
    \frac{1}{Z}m(x)\Big[\sum_{z\in V} p(z\,|\,x)q(z)\Big] \mathbf{1}_A(x)\,,
    & x \in B^c \,,\\
    & \\
	\frac{1}{Z} \sum\limits_{y \in V}m(y)q^{-}(y) p(x\,|\,y)\,, & x \in B\,.
      \end{array}
    \right.
  \end{split}
  \label{other-quant-ergodic}
  \end{align}
  \end{enumerate}
  \label{prop-ergodic}
\end{prop}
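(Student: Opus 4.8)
The plan is to exploit the regenerative structure of the infinitely long stationary trajectory $x_0, x_1, \dots$ together with the ergodic theorem, turning every first-passage-path statistic into a long-run time average normalized by the number of completed cycles. First I would observe that the segment of the trajectory between a fresh entry into $A$ at $\tau^{(k)}_A$ and the next hitting of $B$ at $\tau^{(k)}_B$ is exactly a first passage path, and that by the strong Markov property the successive such cycles form a stationary, ergodic sequence whose common law is the first passage path ensemble with some initial distribution $\mu$ on $A$. Writing $N_T$ for the number of cycles completed by time $T$, the ergodic theorem then gives $N_T/T \to Z$ for a constant $Z$, and for any additive path functional the ensemble average over $\Xi_{non}$, $\Xi_r$ or the whole first passage path ensemble equals the almost-sure limit of the corresponding time sum over $[0,T]$ divided by $N_T$. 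This reduces every claimed formula to computing stationary frequencies of individual visits and individual transitions along the trajectory.

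The crux is to translate the events ``this transition begins a cycle'' and ``this visit lies inside a cycle'' into the backward committor. Using the stated interpretation of $q^-$ --- that, conditioned on $x_l = y$, the most recent visit to $A\cup B$ at or before time $l$ was to $A$ with probability $q^-(y)$, hence to $B$ with probability $1 - q^-(y)$ --- together with Markovianity to decouple the past from the next jump, I would show: a transition $y\rightarrow x$ with $x \in A$ is a fresh $A$-entry with conditional probability $1 - q^-(y)$, and symmetrically a transition $y\rightarrow x$ with $x \in B$ is a fresh $B$-entry with conditional probability $q^-(y)$; moreover, for $x \in B^c$, a visit $x_l = x$ lies inside a cycle with conditional probability $q^-(x)$. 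Multiplying by the stationary weight $m(y)$ (resp.\ $m(x)$) and summing, the long-run rate of fresh $A$-entries is $\sum_{x\in A}\sum_{y} m(y)(1-q^-(y))p(x\,|\,y)$ and that of fresh $B$-entries is $\sum_{x\in B}\sum_{y}m(y)q^-(y)p(x\,|\,y)$; since $A$- and $B$-entries alternate one-for-one along the trajectory these rates coincide, giving the common value $Z$ of (\ref{z-ergodic}). Dividing the state-resolved $A$-entry rate by $Z$ yields $\mu$ as in (\ref{mu-ergodic}); dividing the inside-a-cycle visit frequency $m(x)q^-(x)$ by $Z$ yields $\theta(x) = m(x)q^-(x)/Z$ for $x \in B^c$, while the endpoint visits in $B$ are counted by the $B$-entry rate, giving the second line of (\ref{theta-ergodic}).

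The remaining identities in (\ref{other-quant-ergodic}) need no further probabilistic argument; they follow by inserting the formula for $\theta$ into the relations already established in Section~\ref{sec-reactive}, which apply here because $V^- = V^+ = B^c$. Proposition~\ref{prop-6} gives $\bar{\theta}(x) = \theta(x)(1 - q(x))$, hence $\bar{\theta}(x) = m(x)q^-(x)(1-q(x))/Z$ on $B^c$, and this also holds on $B$ since there $q = 1$. Specializing to $x \in A$, where $q = 0$ and $q^- = 1$ so $\bar{\theta}(x) = m(x)/Z$, Proposition~\ref{prop-3} then yields $\mu_r(x) = \bar{\theta}(x)\big[\sum_{z} p(z\,|\,x)q(z)\big] = m(x)\big[\sum_z p(z\,|\,x)q(z)\big]/Z$. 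Finally, the second identity of Proposition~\ref{prop-6}, $\widetilde{\theta}(x) = \theta(x)q(x) + \mu_r(x)\mathbf{1}_A(x)$, produces the stated $\widetilde{\theta}$ on $B^c$ directly, and on $B$ reduces to $\theta(x)$ (using $q|_B = 1$ and $\mathbf{1}_A|_B = 0$), matching the second line.

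I expect the principal difficulty to lie in making the backward-committor step fully rigorous: justifying that the conditional probability of the relevant past event given the present state equals $q^-$ exactly (including the boundary cases $y \in A$ and $y \in B$), that this event is independent of the forthcoming jump, and that the sequence of cut cycles is genuinely stationary and ergodic so that the passage from time averages to ensemble averages is valid. Careful bookkeeping is also needed for the differing roles of states in $B$, whose visits are counted in $\theta$ and $\widetilde{\theta}$ only as cycle endpoints and are therefore governed by the $B$-entry rate rather than by the inside-a-cycle frequency used on $B^c$.
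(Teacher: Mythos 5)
Your proposal is correct, and parts 2 and 4 of it follow essentially the same path as the paper (the entry-distribution computation via ``coming from $B$'', and then reading off $\mu_r$, $\bar{\theta}$, $\widetilde{\theta}$ from Proposition~\ref{prop-3}, Proposition~\ref{prop-6} and the formula for $\theta$). Where you genuinely diverge is in parts 1 and 3. For the constant $Z$ in (\ref{z-ergodic}), the paper gives a purely algebraic proof: it expands $\sum_{y}m(y)q^-(y)$ using the fact that $p$ has unit row sums, splits the outer sum over $A$, $B$ and $(A\cup B)^c$, and cancels the interior contribution using the backward committor equation (\ref{committor-backward-ergodic}) together with $q^-|_A=1$, $q^-|_B=0$; your proof instead identifies both expressions as long-run rates of fresh $A$-entries and fresh $B$-entries and equates them by the one-for-one alternation of cycles. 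For $\theta$ in (\ref{theta-ergodic}), the paper does not compute frequencies at all: it simply \emph{verifies} that the claimed formula satisfies the linear system (\ref{theta-eqn}) (using the $q^-$ equation and the already-derived formula for $\mu$), implicitly relying on uniqueness of solutions of that system under ergodicity; you instead derive $\theta(x)=m(x)q^-(x)/Z$ directly as the stationary frequency of ``inside-a-cycle'' visits divided by the cycle rate, via the probabilistic interpretation of $q^-$ and the ergodic theorem. The trade-off is clear: the paper's route is shorter and needs only linear algebra plus the known interpretation of $q^-$ at a single point (the derivation of $\mu$), but it explains less about \emph{why} the formulas hold; your route gives a unified renewal/Palm picture in which every quantity is a per-cycle frequency, at the cost of the rigor burden you yourself identify --- making precise the backward-committor conditioning (including boundary states), the independence of the past event from the next jump, and the passage from time averages over the non-i.i.d.\ (merely stationary and ergodic) cycle sequence to ensemble averages. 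Both arguments are sound; yours would, however, need those ergodic-theoretic steps spelled out to match the level of completeness that the paper achieves by verification.
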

\begin{proof}
  \begin{enumerate}
    \item
      We have
      \begin{align}
	\sum_{y \in V} m(y) q^{-}(y) =& \sum_{x \in V}\sum_{y \in V}
	m(y)q^{-}(y) p(x\,|\,y) \notag\\
	=&
\sum_{x \in A}\sum_{y \in V} m(y)q^{-}(y) p(x\,|\,y)
+
\sum_{x \in B}\sum_{y \in V} m(y)q^{-}(y) p(x\,|\,y) \notag \\
&
+
\sum_{x \in (A \cup B)^c}\sum_{y \in V} m(y)q^{-}(y) p(x\,|\,y) \,.
\label{prop-ergodic-eqn-1}
\end{align}
And it follows from (\ref{committor-backward-ergodic}) that
\begin{align*}
  &\sum_{x \in (A \cup B)^c}\sum_{y \in V} m(y)q^{-}(y) p(x\,|\,y) \\
 =& \sum_{x \in (A \cup B)^c}\sum_{y \in V} m(x)q^{-}(y) p^{-}(y\,|\,x)
= \sum_{x \in (A \cup B)^c} m(x)q^{-}(x)\,.
\end{align*}
Therefore, using $q^{-}|_A = 1$, $q^{-}|_B=0$, from (\ref{prop-ergodic-eqn-1}) we obtain
\begin{align*}
  &\sum_{x \in A}\sum_{y \in V} m(y)q^{-}(y) p(x\,|\,y)
+
\sum_{x \in B}\sum_{y \in V} m(y)q^{-}(y) p(x\,|\,y) \\
=&
\sum_{y \in A} m(y) q^{-}(y) + \sum_{y \in B} m(y) q^{-}(y) = \sum_{y \in A}
m(y)\,,
\end{align*}
which implies (\ref{z-ergodic}).
\item
  Using Markovianity of the discrete-time jump process and the definition of
  $q^{-}$, we can derive
  \begin{align*}
    \mu(x) =&\mathbb{P}(x_j = x\,|\, x_{j-1} = y, ~\mbox{coming from}\,B, y \in V) \\
     =& \frac{\mathbb{P}(x_j = x\,, x_{j-1} = y, ~\mbox{coming from}\,B, y \in
  V)}{ \mathbb{P}(x_{j-1} = y, ~\mbox{coming from}\,B, y \in V)}\\
  \propto&\sum_{y \in V} \mathbb{P}(x_j = x\,, x_{j-1} = y\,, ~\mbox{coming from}\,B)
  \\
  \propto & \sum_{y \in V} \mathbb{P}(x_j = x\,|\, x_{j-1} = y)\, \mathbb{P}(x_{j-1} =
  y, ~\mbox{coming from}\,B) \\
  \propto &  \sum_{y \in V}  m(y) (1-q^{-}(y))p(x \,|\,y)\,,
  \end{align*}
  where $\propto$ means ``equal up to a constant'' and the normalization constant is given in (\ref{z-ergodic}).
\item
  Using the equation of $q^-$ in (\ref{committor-backward-ergodic}) and the formula
  of $\mu$ in (\ref{mu-ergodic}),
  it is easy to verify that $\theta$ defined in (\ref{theta-ergodic})
  satisfies equations (\ref{theta-eqn}).
\item
  The expressions of $\bar{\theta}$, $\mu_r$ and $\widetilde{\theta}$ follows
  from (\ref{theta-ergodic}), Proposition~\ref{prop-3} and Proposition~\ref{prop-6}.
  \end{enumerate}
\end{proof}
\begin{remark}
  Using the expression of $\mu_r$ in (\ref{other-quant-ergodic}) and an argument similar
  to the proof of (\ref{z-ergodic}), we could obtain
  \begin{align}
    \begin{split}
    Z =& \sum_{x \in A}\sum_{y \in V} m(y) (1 - q^-(y))p (x\,|\,y) = \sum_{x \in
  B}\sum_{y \in V} m(y) q^{-}(y)p(x\,|\,y)\\
  = &\sum_{x \in A}\sum_{y \in V} m(x) q(y)p (y\,|\,x) = \sum_{x \in
  B}\sum_{y \in V} m(x) (1 - q(y)) p(y\,|\,x)\,.
\end{split}
  \label{z-ergodic-strong}
  \end{align}
  \label{rmk-z}
\end{remark}

Finally, we consider the continuous-time jump process introduced in Section~\ref{sec-ctjp}
and assume its associated discrete-time Markov jump process obtained in
Subsection~\ref{subsec-continuous-to-discrete} is ergodic.
In this case, we consider an infinitely long trajectory of the process as in
(\ref{path-with-time}) in Appendix~\ref{appsec-2} and let $M=M(N)>0$ be the integer such that $\tau_B^{(M)} \le N < \tau_B^{(M+1)}$,
where the stopping times $\tau_B^{(M)}$ are defined in (\ref{tau-a-b-k}).
Following \cite{tpt_jump}, we define the reaction rate $k_{AB}$ between sets
$A$ and $B$ by
\begin{align}
  k_{AB} = \lim_{N\rightarrow +\infty} \frac{M}{t_N}\,.
  \label{rate-k-ab}
\end{align}
We have the following result.
\begin{prop}
  Consider the continuous-time jump process introduced in
  Subsection~\ref{subsec-continuous-to-discrete}. Let $\kappa$ be defined in
  (\ref{kappa}). $p, q$ are the transition
  probabilities and the committor function of the associated discrete-time
  Markov jump process defined in (\ref{p-b-formula}), (\ref{committor-q}),
  respectively.  Suppose that this discrete-time jump process is ergodic with a unique invariant measure $m$. We have
  \begin{align}
    k_{AB} = \frac{\sum_{x \in
    A}\sum_{y \in V} m(x) p(y\,|\,x) q(y)}{\sum_{x \in V} m(x)\kappa(x)}\,.
    \label{k-ab-formula}
  \end{align}
  \label{prop-rate}
\end{prop}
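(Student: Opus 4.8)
The plan is to express the rate as a ratio of two long-run averages and to evaluate each by the ergodic theorem. Writing $w_l = t_{l+1}-t_l$ for the holding time at the $l$-th step, so that $t_N = \sum_{l=0}^{N-1} w_l$, I would start from
\[
  k_{AB} = \lim_{N\to\infty} \frac{M}{t_N} = \lim_{N\to\infty} \frac{M/N}{t_N/N}\,,
\]
and reduce the claim to showing that $M/N$ and $t_N/N$ converge almost surely to the two factors appearing in (\ref{k-ab-formula}). Since the associated discrete-time chain is irreducible and aperiodic with $x_0\sim m$, the sequence $(x_l)_{l\ge 0}$ is stationary and ergodic; attaching the conditionally independent holding times yields an augmented sequence $(x_l,w_l)_{l\ge 0}$ which one checks is again stationary and ergodic. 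This is the structure I would exploit for both limits.

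For the denominator I would apply Birkhoff's ergodic theorem to $(x_l,w_l)$ with the observable $w_0$. By the renewal structure of the jump process the holding time at a node has survival function $a(\cdot\,|\,x)$, so $\mathbb{E}[w_l\,|\,x_l=x] = \int_0^{+\infty} a(t\,|\,x)\,dt = \kappa(x)$ by (\ref{kappa}); hence $\mathbb{E}[w_0] = \sum_{x\in V} m(x)\kappa(x)$, which is finite because $V$ is finite and each $\kappa(x)<+\infty$. Birkhoff's theorem then gives $t_N/N \to \sum_{x\in V} m(x)\kappa(x)$ almost surely.

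For the numerator the key observation is that $M$ equals, up to an additive $O(1)$ boundary term, the number of indices $l<N$ at which the process makes its \emph{last} departure from $A$ before hitting $B$. Concretely I would set $h_l = \mathbf{1}_A(x_l)\,\mathbf{1}[\text{the excursion started at }x_{l+1}\text{ reaches }B\text{ before }A]$; by the definition of $\sigma^{(k)}$, exactly one index in each completed reactive episode (namely $l=\sigma^{(k)}$) satisfies $h_l=1$, while every earlier visit to $A$ in that episode returns to $A$ before $B$ and contributes $0$, so $\sum_{l=0}^{N-1} h_l \in \{M, M+1\}$. Applying Birkhoff to $(x_l)$ with the bounded observable $h_0$ and using the committor identity $\mathbb{P}(\text{reach }B\text{ before }A\mid x_1=y) = q(y)$ gives $\mathbb{E}[h_0] = \sum_{x\in A} m(x)\sum_{y\in V} p(y\,|\,x)\,q(y)$. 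Thus $M/N \to \sum_{x\in A}\sum_{y\in V} m(x)p(y\,|\,x)q(y)$ almost surely, and dividing by the denominator limit yields (\ref{k-ab-formula}).

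The main obstacle is the almost-sure control of these two averages rather than any algebra. For the holding times this is delicate because the waiting-time densities $\psi$ may be heavy-tailed (e.g. the power-law case, where the variance can be infinite while $\kappa$ remains finite), so a variance-based law of large numbers would fail; the correct instrument is Birkhoff's theorem, which needs only integrability, i.e. $\sum_x m(x)\kappa(x)<+\infty$. For the numerator the subtlety is that $h_l$ depends on the \emph{future} of the trajectory, so it cannot be evaluated by a one-step conditional expectation and must instead be treated as a genuine observable on path space — which is exactly why the ergodic theorem together with the committor interpretation of the success probability is the natural tool. I would also remark that the numerator coincides with the normalization constant $Z$ of (\ref{z-ergodic-strong}), so the result can be read as $k_{AB} = Z/\sum_x m(x)\kappa(x)$, consistent with $Z$ being the asymptotic density of reactive events per discrete step.
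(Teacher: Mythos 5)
Your proof is correct and takes essentially the same route as the paper's: both write $k_{AB} = \lim_{N\to\infty} (M/N)\big/(t_N/N)$, evaluate the numerator limit as $\sum_{x\in A}\sum_{y\in V} m(x)\,p(y\,|\,x)\,q(y)$ via the last-exit characterization of reactive events (exactly one index per completed episode, with the committor $q(y)$ giving the probability that the landing node leads to $B$ before $A$), and evaluate the denominator limit as $\sum_{x\in V} m(x)\kappa(x)$ through the mean holding times. Your write-up merely makes explicit what the paper treats informally, namely the invocation of Birkhoff's theorem for the path-space observable, the stationarity of the augmented sequence $(x_l,w_l)$, and the $\{M,M+1\}$ boundary count.
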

\begin{proof}
  We consider the path in (\ref{path-with-time}) in Appendix~\ref{appsec-2} which jumps from state
  $x_{i}$ to $x_{i+1}$ at time $t_{i+1}$, $i \ge 0$. We also denote its state
  at time $s\ge 0$ as $x_s$.
  Using the Markov property and ergodicity of the discrete-time jump process, we have
  \begin{align}
    &\sum_{x \in A}\sum_{y \in V} m(x) p(y\,|\,x) q(y) \notag \\
    =&  \sum_{x \in A}
    \sum_{y \in V} \mathbb{P}(x_i=x, x_{i+1}=y, x_{i+1}~\mbox{is on reactive
    segment}\,, \mbox{for some } 0 \le i < +\infty) \notag \\
    =& \lim_{N\rightarrow +\infty} \frac{1}{N}\sum_{k=1}^M \sum_{i=0}^{N-1} \sum_{x \in A}
    \sum_{y \in V} \mathbf{1}_x(x_{i})
    \mathbf{1}_{y}(x_{i+1})\mathbf{1}_{\sigma^{(k)}}(i) \notag \\
    =& \lim_{N \rightarrow +\infty} \frac{M}{N}\,. \label{rate-proof-eqn-1}
  \end{align}
  On the other hand, we have
  \begin{align*}
    \kappa(x) =& \lim_{N\rightarrow +\infty} \frac{\sum\limits_{i=0}^{N-1} (t_{i+1} -
    t_i) \mathbf{1}_x(x_i)}{\sum\limits_{i=0}^{N-1} \mathbf{1}_x(x_i)}\\
    =& \lim_{N\rightarrow +\infty} \frac{\frac{1}{N}\int_0^{t_N}
    \mathbf{1}_x(x_s) ds}{\frac{1}{N} {\sum\limits_{i=0}^{N-1}
    \mathbf{1}_x(x_i)}}
    = \frac{1}{m(x)} \lim_{N\rightarrow +\infty}\frac{1}{N}\int_0^{t_N} \mathbf{1}_x(x_s) ds\,.
  \end{align*}
  Therefore,
  \begin{align}
    \sum_{x \in V} m(x)\kappa(x) =  \lim_{N\rightarrow +\infty}\frac{1}{N}\int_0^{t_N} \sum_{x \in V}\mathbf{1}_x(x_s) ds
    = \lim_{N \rightarrow +\infty} \frac{t_N}{N}\,.
    \label{rate-proof-eqn-2}
  \end{align}
  Combining (\ref{rate-proof-eqn-1}) and (\ref{rate-proof-eqn-2}), we can
  conclude that
  \begin{align*}
    k_{AB} = \lim_{N\rightarrow +\infty} \frac{M}{t_N} = \frac{\sum_{x \in
    A}\sum_{y \in V} m(x) p(y\,|\,x) q(y)}{\sum_{x \in V} m(x)\kappa(x)}\,.
  \end{align*}
\end{proof}

\begin{remark}
  We conclude this section with the following remarks.
  \begin{enumerate}
    \item
      From (\ref{z-ergodic-strong}) in Remark~\ref{rmk-z}, we know the
      numerator in the expression (\ref{k-ab-formula}) equals the constant $Z$ and can be replaced by
      the other expressions in (\ref{z-ergodic-strong}).
    \item
      The case when the continuous-time jump process itself is Markovian and ergodic
      has been studied in \cite{tpt_jump}. The invariant measure $\pi$ in
      \cite{tpt_jump} of the continuous-time process is related to $m$ and $\kappa$ in the
      present work by
      \begin{align}
	\pi(x) = \frac{m(x)\kappa(x)}{\sum_{x \in V} m(x)\kappa(x)}\,.
      \end{align}
      Together with formulas in (\ref{exp-formulas}), we can verify that
      Proposition~\ref{prop-ergodic} and Proposition~\ref{prop-rate} are
      accordant with the results in \cite{tpt_jump}. Therefore, we have
      extended the analysis there to more general continuous-time jump processes introduced in
      Subsection~\ref{subsec-continuous-to-discrete}.
  \end{enumerate}
\end{remark}

\section{Algorithmic issues}
\label{sec-algo}
In this section, we briefly discuss some algorithmic issues related to applying
the analysis of Section~\ref{sec-reactive} and Sections~\ref{sec-ctjp} to applications.
\subsection{Summary of the analysis procedure}
\label{sub-sec-algo-summary}
Given a continuous-time or discrete-time jump process on a finite state space, 
the analysis of its first passage paths can be proceeded as follows.
\begin{enumerate}
  \item
    \textbf{Construction of the discrete-time network.}

    The analysis of the current work can be applied to both continuous-time
    and discrete-time jump processes.
    In the case of a continuous-time jump process,
    we have shown in Section~\ref{sec-ctjp}
    how the transition probabilities $p$ of the corresponding
    discrete-time Markov jump process (without time information) are related to the probability
    density function $\psi$ of the waiting times.
    For certain probability density $\psi$, analytical formulas of $p$,
    $\kappa$ as well as other quantities in Section~\ref{sec-ctjp} can be obtained.
    For more general probability densities, however, numerical integration is needed
    in order to compute $p$, $\kappa$ using formulas
    (\ref{a-b-formula}), (\ref{p-b-formula}) and (\ref{kappa}).

  \item
    \textbf{Calculation of the statistics of path ensembles.}
    After obtaining transition probabilities $p$, we can compute various
    statistical quantities of the nonreactive ensemble, reactive ensemble, as
    well as the entire first passage path ensemble. While the
    equations for each quantity have been derived in
    Section~\ref{sec-reactive} and some quantities can be obtained in several ways,
    we suggest to proceed in the order which is summarized in Algorithm~\ref{algo-1}.
    In the ergodic case where the initial distribution $\mu$ is given in (\ref{mu-ergodic}),
    we could either first obtain $q^{-}$ from the backward committor equation
    (\ref{committor-backward-ergodic}) and compute the other quantities using
    Proposition~\ref{prop-ergodic}, or follow the procedure in~\cite{tpt_jump}.

    Among the steps in Algorithm~\ref{algo-1}, the only possible computational difficulties are related to
    solving the linear systems (\ref{committor}) and (\ref{theta-eqn}). Using
    numerical packages such as PETSc~\cite{petsc-web-page}, however, these linear
    systems can be easily solved for (sparse) networks with several thousand
    nodes and therefore the algorithm could be used in a wide range of applications.
    \begin{algorithm}[h]
      \caption{Analysis of path ensembles for jump process on a network\label{algo-1}}
	  \begin{algorithmic}[1]
	    \State
	Compute the committor function $q$, function $\theta$ by solving
	linear systems (\ref{committor}) and (\ref{theta-eqn}), respectively.
	    \State
	Obtain functions $\bar{\theta}$,
	$\bar{\theta}'$,
	$\widetilde{\theta}$ from the simple relations
	(\ref{mu-r-theta-bar}), (\ref{theta-bar-mr-m}) in
	Proposition~\ref{prop-3} and Proposition~\ref{prop-6}.
	    \State
	Compute the transition probabilities $\bar{p}$ and $\widetilde{p}$
	from equations (\ref{p-bar}), (\ref{p-tilde}), respectively.
	And then compute the fluxes $\bar{J}$, $\widetilde{J}$ and $J$
	from (\ref{j-non-r-flux}), (\ref{J-r-flux}) and (\ref{j-def}).
	    \State
	Compute the mean total path lengths from (\ref{theta-bar-sum}),
	(\ref{theta-tilde-sum}) and (\ref{f-theta-bar-theta-tilde}).
	    \State
	In the case that the process is continuous in time, compute $\bar{T}$,
	$\widetilde{T}$ and $T$ from (\ref{t-bar-exp}), (\ref{t-tilde-exp})
	and (\ref{t-exp}).
      \end{algorithmic}
    \end{algorithm}
  \item
    \textbf{Post-processing.}
    The various statistics obtained above allow us to better understand the mechanism
    of the system's transitions from one set to another.
    However, how to interpret and represent these information is a nontrivial question
    and indeed depends on the problem at hand. In some cases, the key 
    interest is to find certain important pathways which the system is
    likely to take~\cite{tpt2010,tpt_eric2006,tpt_jump}.
    Algorithms for computing dominant pathways in the network setting have
    been studied in \cite{tpt_jump,milestoning_network2013,analyzing_milestoning_network2013}.
    In general, however, the first passage paths of the system may
    be typically very long and diffusive. In this case, presenting the
    statistics by identifying a single representative pathway may be
    incomplete or even misleading (see examples in Section~\ref{sec-example}).
    In the current work, instead of discussing in detail specific methods to
    further exploit the statistics, we simply point out that one can rank the
    importance of nodes and edges based on these statistics. Specifically,
    taking the reactive trajectories as an example, 
    \begin{enumerate}
      \item
	By ranking nodes according to committor function $q$, we can figure out
	how ``close'' each node is to the initial set $A$ and to the terminal
	set $B$. (Here the closeness is measured by the committor probability).
      \item
	By ranking nodes according to function $\widetilde{\theta}$, we know which nodes are more frequently visited by the reactive trajectories.
      \item
	By ranking edges according to flux $\widetilde{J}$, we know which edges are
	more frequently visited by the reactive trajectories.
    \end{enumerate}
    In the case of the continuous-time process, ranking can be also based on
    $\widetilde{T}$ so that we could identify nodes on which the process will spend more time.
    Certainly, similar rankings allow us to identify nodes and
    edges which are important for the nonreactive trajectories, as well as for the entire first passage paths.
\end{enumerate}
\subsection{Data-based approach}
\label{subsec-data-based}
In various real-world applications, we may face the situation that the
system's information is not available, i.e. $p$ or $\phi$ are unknown, and only
the trajectories of the system can be observed.
In this case, one usually takes a data-based approach and
constructs a Markov jump process from the available data.
    Specifically, suppose that we have
    obtained $M$ trajectories of the system.
    For the $i$th trajectory, $1\le i \le M$,
    it starts from $x^{(i)}_0 \in A$ at time $t^{(i)}_0=0$
    and jumps to state $x^{(i)}_l$ at time $t^{(i)}_l$, $1 \le l \le \tau_i$,
    before it reaches set $B$ at time $t^{(i)}_{\tau_i}$ for the first time. That
    is, $x^{(i)}_l \in B^c$ for $0 \le l < \tau_i$ and $x^{(i)}_{\tau_i} \in B$.
    Also define $\sigma_i$ to be the last time when the $i$th trajectory visits
    set $A$. It satisfies $0 \le \sigma_i < \tau_i$,
    $x_{\sigma_i}^{(i)} \in A$ and $x_{l}^{(i)} \in A^c$ for $\sigma_i < l \le
    \tau_i$.
    Then a natural way to estimate the function $\kappa$
    and transition probabilities $p$ is given by
    \begin{align}
      \kappa(x) = \frac{\sum\limits_{i=1}^M\sum\limits_{l=0}^{\tau_i-1}
	(t_{l+1}^{(i)} -
      t_{l}^{(i)})\mathbf{1}_x(x_{l}^{(i)})}{\sum\limits_{i=1}^M\sum\limits_{l=0}^{\tau_i-1}
      \mathbf{1}_x(x_{l}^{(i)})}\,, \qquad
      p(y\,|\,x) = \frac{\sum\limits_{i=1}^M\sum\limits_{l=0}^{\tau_i-1}
      \mathbf{1}_x(x_l^{(i)})\mathbf{1}_y(x_{l+1}^{(i)})}{\sum\limits_{i=1}^M\sum\limits_{l=0}^{\tau_i-1}
      \mathbf{1}_x(x_l^{(i)})}\,,
      \label{kappa-p-estimator}
    \end{align}
    where $x, y \in V$, provided that the denominator is nonzero.
      Similarly, for the initial distribution $\mu$, we can estimate
      \begin{align}
      \mu(x) = \frac{1}{M} \sum_{i=1}^M \mathbf{1}_x(x_0^{(i)})\,,
      \label{init-mu-estimator}
    \end{align}
      where $x \in A$.
    Detailed studies related to the reconstruction of the Markov jump
    processes from data can be found in \cite{estimation_Metzner2007,msm_generation} and references therein.

    After estimating $\kappa$ and $p$ from (\ref{kappa-p-estimator}), we
    can follow Algorithm~\ref{algo-1} and the discussions in
    Subsection~\ref{sub-sec-algo-summary} to perform the analysis.
    In fact, direct calculation shows that function $\theta$ and flux $J$ in
    (\ref{theta}) and (\ref{j-flux-path}) are simply given by
    \begin{align}
      \theta(x) = \frac{1}{M} \sum\limits_{i=1}^M\sum\limits_{l=0}^{\tau_i-1}
      \mathbf{1}_x(x_{l}^{(i)})\,, \quad J(x \rightarrow y) = \frac{1}{M}
      \sum\limits_{i=1}^M\sum\limits_{l=0}^{\tau_i-1}
      \mathbf{1}_x(x_{l}^{(i)}) \mathbf{1}_y(x_{l+1}^{(i)})\,.
      \label{right-estimator}
    \end{align}
    In other words, instead of solving the linear system (\ref{theta-eqn}), the
    ensemble average $\theta$ and $J$ of the constructed Markov jump process
    can be obtained by ``counting'' the trajectory data.
    However, we emphasize that this is not true in general for other quantities of ensemble averages.
    For instance, generally, the committor function $q$ and quantity $\bar{\theta}$,
    which satisfy equations (\ref{committor}) and (\ref{theta-bar-eqn}) respectively,
    are different from
    \begin{align}
      q_{data}(x) =
      \frac{\sum\limits_{i=1}^M\sum\limits_{l=\sigma_i+1}^{\tau_i}
      \mathbf{1}_x(x_{l}^{(i)})}{
      \sum\limits_{i=1}^M\sum\limits_{l=0}^{\tau_i}
    \mathbf{1}_x(x_{l}^{(i)})}\,, \quad \bar{\theta}_{data}(x) =
\frac{1}{M} \sum\limits_{i=1}^M\sum\limits_{l=0}^{\sigma_i}
\mathbf{1}_x(x_{l}^{(i)})\,,
\label{wrong-estimator}
    \end{align}
    which are computed by ``counting'' the trajectory data.
    As a simple counterexample, consider graph $G$ with four nodes $V =
    \{0,1,2,3\}$. We define sets $A=\{0\}, B=\{3\}$ and suppose that only $2$
    trajectories $(0,1,2,3)$, $(0,2,0,2,3)$ are observed (time information is ignored for simplicity).
    Then clearly $\sigma_1=0$, $\sigma_2 = 2$, and
    it follows from (\ref{wrong-estimator}) that $q_{data}(1) = 1$, $\bar{\theta}_{data}(1) = 0$.
    On the other hand, from (\ref{kappa-p-estimator}), we have $p(2\,|\,1) > 0,
    p(0\,|\,2) > 0$, and it follows that $q(1) < 1$ as a consequence of
    Proposition~\ref{prop-0}. Similarly, we can also show that $\bar{\theta}(1) > 0$.

    In summary, after obtaining transition probabilities $p$ from
    (\ref{kappa-p-estimator}), one still needs to apply Algorithm~\ref{algo-1} to
    obtain various statistical quantities. In the first step of
    Algorithm~\ref{algo-1}, although function $\theta$ can be calculated from
    (\ref{right-estimator}), it is necessary to compute the committor function $q$ by solving
    the linear system (\ref{committor}).

\section{Numerical examples}
\label{sec-example}
In this section, we study several examples in order to demonstrate
the analysis in the previous sections.
\subsection{Example $1$ : continuous-time jump processes on a simple graph}
\label{subsec-example-simple-graph}
As the first example, we consider continuous-time jump processes on a graph
which consists of $7$ nodes.
As shown in Figure~\ref{fig-ex1-1} (a), we choose sets $A=\{1,2\}$,
$B=\{7\}$, and study jump processes starting from set $A$ until they
reach set $B$. We assume the initial distribution is uniform on $A$, i.e. $\mu(1)
= \mu(2) = 0.5$. Three different cases are considered, i.e. when the
waiting time at each node satisfies (i) an exponential distribution,
(ii) a power law distribution and (iii) a Weibull distribution, respectively (see
Subsection~\ref{subsec-continuous-to-discrete}).

In both cases of the exponential and Weibull
distribution, we take the same rate constants $\lambda_{xy}$ in (\ref{exp-dist}) and
(\ref{weibull}), which are shown in Figure~\ref{fig-ex1-1}\,(a) for each edge
$x\rightarrow y$, where $k=2$ is used for each edge in the Weibull distribution.
In the case of the power law distribution, we choose $\alpha_{xy}$ in (\ref{power-law}) to
be equal to $\lambda_{xy} + 1$ for each edge $x\rightarrow y$, such that the
mean of the waiting time along each edge is the same as in the
case of the exponential distribution.

In each case, starting from the continuous-time jump processes, we first construct the corresponding discrete-time jump processes
using relations (\ref{a-b-formula}), (\ref{p-b-formula}) in
Section~\ref{sec-ctjp}. The transition probabilities $p$ for each edge as well as
the committor function $q$ for each node are shown in
Figure~\ref{fig-ex1-1} (b)-(d). Clearly,
different assumptions on the waiting time distributions of the continuous-time
jump processes result in discrete-time Markov jump processes with different transition probabilities.
In fact, from the formulas of transitions probabilities $p$ in
(\ref{exp-formulas}), (\ref{weibull-formulas}),  we
can conclude that, comparing to the exponential distribution case, the difference between
jump probabilities $p$ along edge $x\rightarrow y$ and edge $x\rightarrow y'$ 
where $\lambda_{xy}\neq \lambda_{xy'}$ will be larger when the waiting times follow Weibull distribution with $k=2$.
Furthermore, it can be observed that comparing to the exponential distributions, the
probabilities to jump from ``left to right'' (e.g. along edges $3\rightarrow 5$,
$4\rightarrow 6$ and $6\rightarrow 7$) are larger in the case of power law
distributions, and are smaller in the case of Weibull distributions ($k=2$).

The ensembles of the nonreactive segments, the reactive segments, as well as
the whole first passage paths are studied following the analysis in
Section~\ref{sec-reactive} and Section~\ref{sec-ctjp}.
The ensemble averages $\bar{\theta}'$, $\widetilde{\theta}$, $\theta$,
$\bar{T}$, $\widetilde{T}$ and $T$ for each node in different cases are
shown in Table~\ref{tab-ex1-1}.
Also see Figure~\ref{fig-ex1-1-1} where these quantities are displayed in the case of the exponential distribution.
We refer to equations (\ref{theta-bar-prime}), (\ref{theta-tilde}), (\ref{theta}),
(\ref{t-bar}), (\ref{t-tilde}), (\ref{t-total}) as well as Table~\ref{tab-notation} for definitions of
these quantities.
In each case, from the values of the quantity $\bar{\theta}'$ we can observe that the system
will typically visit nodes $3,4$ and return to set $A$ a few times before it eventually
arrives at set $B$.
As shown in the columns with label ``Total'' in Table~\ref{tab-ex1-1}, we also see that while in each case the
reactive trajectories contain a similar number of jumps on average ($4.60$--$5.95$, last column rows $2$, $9$, $16$),
there are fewer jumps within the nonreactive trajectories in the case of power
law distribution ($5.77$), but on the other hand there are more jumps in the nonreactive
trajectories in the case of the Weibull distribution ($22.59$).
The same is also true for the first passage paths as well as when we take time
information into account.
These results are accordant with our previous observations based on Figure~\ref{fig-ex1-1}
(b)-(d) that transitions of the system from set $A$ to $B$ are more difficult when the jump time follows the Weibull distribution.
\begin{figure}
  \begin{tabular}{cc}
    \subfigure[Continuous-time jump processes]{\includegraphics[width=0.43\textwidth]{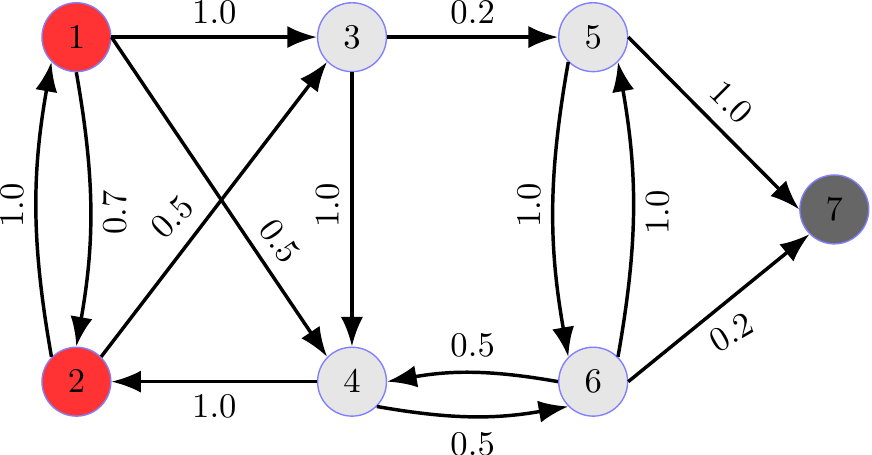}} &
    \subfigure[Exponential distribution]{\includegraphics[width=0.43\textwidth]{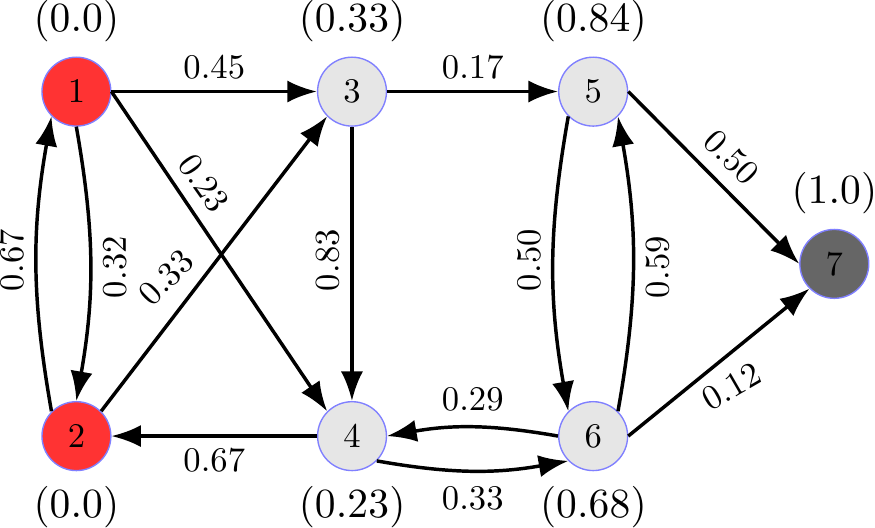}}
    \\
    \subfigure[Power law distribution]{\includegraphics[width=0.43\textwidth]{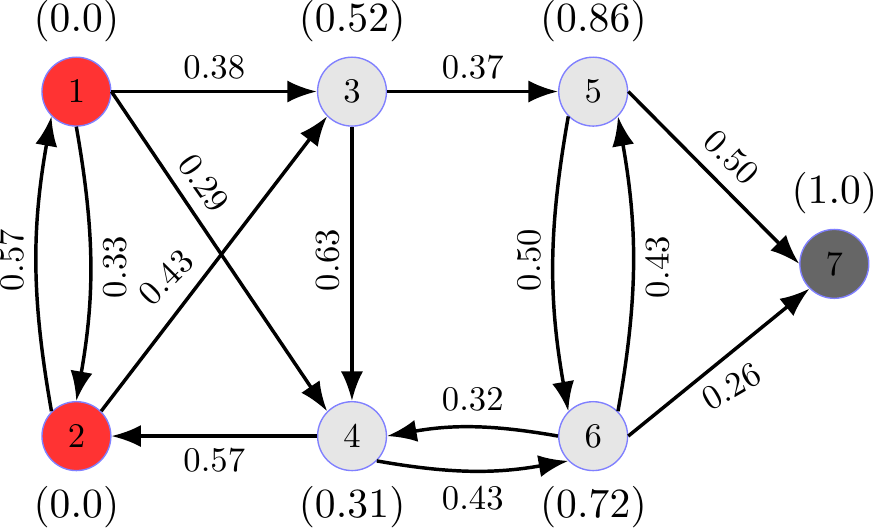}} &
    \subfigure[Weibull distribution]{\includegraphics[width=0.43\textwidth]{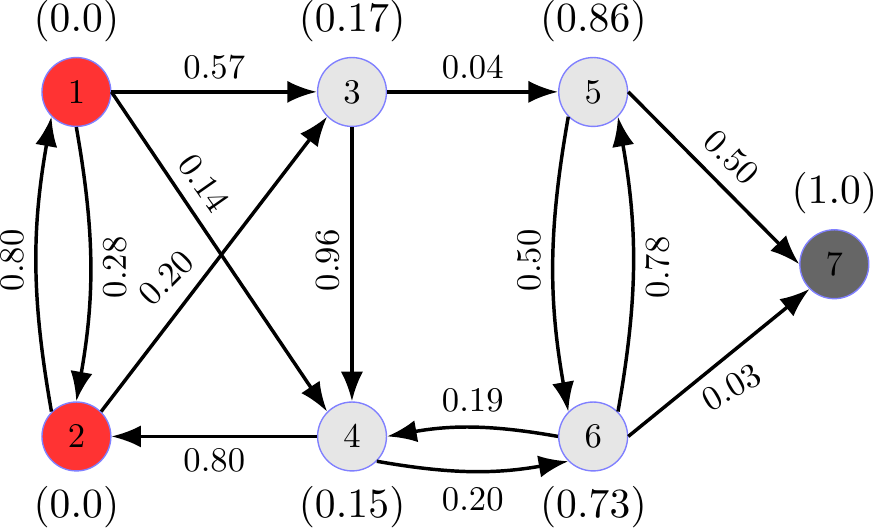}}
\end{tabular}
\caption{Example $1$. Figure (a) : continuous-time jump processes on a graph with $7$ nodes. Sets
  $A=\{1,2\}$ and $B=\{7\}$. Rate constants $\lambda_{xy}$ in both the
exponential and Weibull distributions are shown on each edge.
Figure (b)-(d) : 
the associated discrete-time Markov jump processes
when the jump times of the original
continuous-time process obey exponential distributions, the power law distributions with $\alpha_{xy} = 1
+ \lambda_{xy}$, as well as the Weibull distributions with $k=2$ and
$\lambda_{xy}$, respectively.  
In each case, jump probabilities $p$ of the associated discrete-time Markov jump processes
are shown on each edge, and the values of the committor function $q$ for each node are shown in brackets.  
\label{fig-ex1-1}}
\end{figure}
\begin{figure}
  \begin{tabular}{cc}
    \subfigure[$\bar{\theta}'$ and
    $\bar{J}$]{\includegraphics[width=0.46\textwidth]{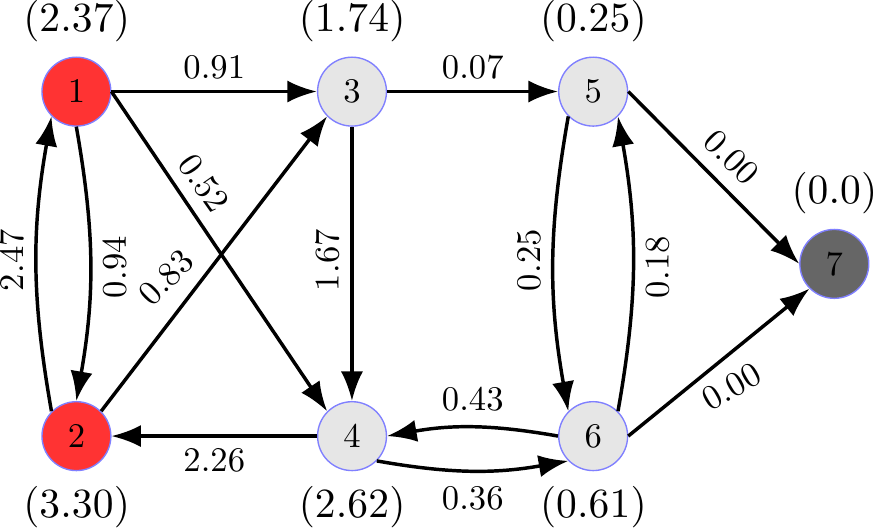}} &
    \subfigure[$\widetilde{\theta}$ and
    $\widetilde{J}$]{\includegraphics[width=0.46\textwidth]{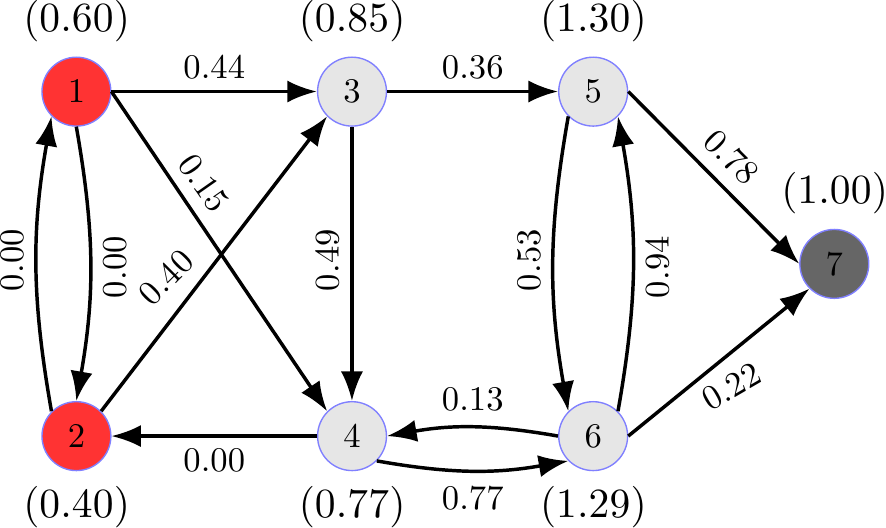}}
\end{tabular}
\caption{Example $1$ in the case of exponential waiting time distributions.
  (a)
   average number of times $\bar{\theta}'$ that each node has been visited by the nonreactive
  trajectories (in the brackets) as well as the nonreactive flux $\bar{J}$ for
  each edge related to the nonreactive trajectories (on the edges).
(b)
average number of times $\widetilde{\theta}$ that each node has been visited
by the reactive trajectories (in the brackets) as well as the reactive flux $\widetilde{J}$ for
each edge related to the reactive trajectories (on the edges).
See Table~\ref{tab-notation} for concrete definitions of these quantities.
\label{fig-ex1-1-1}
}
\end{figure}
\begin{table}[hpt]
\centering
  \begin{tabular}{|c|c|c|c|c|c|c|c|c|c|}
    \hline
     & Node $x$ & $1$ & $2$ & $3$ & $4$ & $5$ & $6$ & $7$ & \mbox{Total} \\
    \hline
   \multirow{7}{*}{exponential} & $\bar{\theta}'$ & $2.37$ & $3.30$ & $1.74$ &  $2.62$ & $0.25$ & $0.61$ & $0.00$ & $10.90$ \\
				       & $\widetilde{\theta}$ & $0.60$ & $0.40$ & $0.85$ &  $0.77$ & $1.30$ & $1.29$ & $1.00$& $5.21$ \\
				 & $\theta$ & $2.97$ & $3.70$ & $2.58$ &  $3.39$ & $1.55$ & $1.91$ & $1.00$& $16.10$ \\
    & $\kappa$ & $0.45$ & $0.67$ & $0.83$ &  $0.67$ & $0.50$ & $0.59$& $-$ & $-$ \\
    &  $\bar{T}$ & $1.08$ & $2.20$ & $1.45$ & $1.75$ & $0.13$ & $0.36$ & $0.00$ & $6.96$ \\
    & $\widetilde{T}$ & $0.27$ &  $0.27$ & $0.71$ & $0.51$ & $0.65$ &  $0.76$& $0.00$ & $3.17$ \\
    & $T$ & $1.35$ & $2.47$ & $2.15$ & $2.26$ & $0.78$ & $1.12$ & $0.00$ & $10.13$\\
    \hline
    \hline
\multirow{7}{*}{power law} & $\bar{\theta}'$ & $1.26$ & $1.73$ & $0.79$ &  $1.39$ & $0.17$ & $0.42$ & $0.00$ & $5.77$ \\
				       & $\widetilde{\theta}$ & $0.51$ & $0.49$ & $0.84$ &  $0.62$ & $1.07$ & $1.07$ & $1.00$ & $4.60$ \\
			 & $\theta$ & $1.77$ & $2.23$ & $1.64$ &  $2.01$ & $1.24$ & $1.48$ & $1.00$ & $10.36$ \\
    & $\kappa$ & $0.24$ & $0.40$ &  $0.45$ & $0.40$ &  $0.33$ & $0.27$ & $-$ & $-$ \\
    & $\bar{T}$ & $0.30$ & $0.69$ & $0.36$ & $0.55$ & $0.06$ & $0.11$ & $0.00$ & $2.08$ \\
    & $\widetilde{T}$ & $0.12$ &  $0.20$& $0.38$ & $0.25$ & $0.36$ & $0.29$ & $0.00$ & $1.59$ \\
    & $T$ & $0.42$ & $0.89$ & $0.74$ & $0.80$ & $0.41$ & $0.40$ & $0.00$ &$3.67$\\
    \hline
    \hline
    \multirow{7}{*}{Weilbull} &  $\bar{\theta}'$ & $5.47$ & $6.91$ & $4.14$ &  $5.23$ & $0.25$ & $0.58$ & $0.00$ & $22.59$ \\
					     &		       $\widetilde{\theta}$ & $0.75$ & $0.25$ & $0.87$ &  $0.89$ & $1.61$ & $1.58$ & $1.00$ & $5.95$ \\
			& $\theta$ & $6.22$ & $7.16$ & $5.01$ &  $6.13$ & $1.87$ & $2.16$ & $1.00$ & $28.54$ \\
    & $\kappa$ & $0.67$ & $0.79$ & $0.87$ & $0.79$ & $0.63$ & $0.78$ & $-$ & $-$ \\
    & $\bar{T}$ & $3.68$ & $5.48$ & $3.60$ & $4.15$ & $0.16$ & $0.46$ & $0.00$ & $17.51$ \\
    & $\widetilde{T}$ & $0.50$  & $0.20$ & $0.76$ & $0.71$ & $1.01$ & $1.23$ & $0.00$ & 	$4.41$ \\
    & $T$ & $4.18$ & $5.67$ & $4.35$ & $4.86$ & $1.17$ & $1.68$ & $0.00$ & $21.92$ \\
    \hline
\end{tabular}
\caption{Example $1$.  Ensemble averages of the continuous-time jump processes and the
  associated discrete-time Markov jump processes are displayed for each node,
  when the jump times obey either exponential distribution, power law distribution,
  or Weibull distribution.
  $\kappa$ is defined in (\ref{kappa}).
$\bar{\theta}'$, $\bar{T}$ are related to the nonreactive trajectories and are
defined in (\ref{theta-bar-prime}),
(\ref{t-bar}).
$\widetilde{\theta}$, $\widetilde{T}$ are related to the reactive trajectories and
are defined in (\ref{theta-tilde}), (\ref{t-tilde}).
$\theta$, $T$ are related to the whole mean first passage paths and are
defined in (\ref{theta}), (\ref{t-total}).
For each ensemble average, the column with label ``Total'' shows the sum of all nodes except node $7$.
Notice that relations (\ref{f-theta-bar-theta-tilde}) and (\ref{t-exp}) hold up to rounding errors.
\label{tab-ex1-1}
}
\end{table}
\subsection{Example $2$ : discrete-time random walk in a maze}
\label{subsec-example-maze}
In this subsection, we consider a (discrete-time) random walk in a maze. This example has
been considered to illustrate TPT in \cite{tpt2010}. However, since the theory
there requires ergodicity of the process, an implicit assumption one need to make is that the walker keeps
moving in the maze even after arriving at the exit cell.
In this work, we investigate the same example (except we don't have to make the aforementioned assumption)
and apply the analysis in
Section~\ref{sec-reactive} to study its path ensembles.

The maze is constructed on a $30 \times 30$ grid and the structure is shown in Figure~\ref{fig-ex2-1}. We assume that the walker
enters the maze from the cell at the bottom left corner. At each unit of time, the walker
moves to a new cell which is chosen randomly with equal probability
from all neighboring cells that are directly reachable from the current one (no wall in between).
The walker keeps moving until it finally arrives at the upper right corner
where it can leave the maze.

To apply the analysis in Section~\ref{sec-reactive}, we construct an
undirected graph whose nodes consist of the cells of the maze
and two nodes are connected by an edge if and only if the corresponding cells
in the maze are adjacent and there is no wall in between.
 Sets $A$, $B$ consist of a single node
corresponding to the cell at the bottom left and the upper right corner (see
Figure~\ref{fig-ex2-1}), respectively.
Since the process itself is a discrete-time Markov jump process,
its transition probabilities $p$ can be directly computed from the
connectivity structure of the maze.
Committor function $q$ and function $\theta$ related to the first
passage paths are shown in Figure~\ref{fig-ex2-2}.
From the committor function $q$, we can observe that there are long vertical
walls which roughly separate the maze into left and right parts. In the left
part (dark blue cells in the left panel of Figure~\ref{fig-ex2-2}), the walker is likely to
return to $A$ first before it goes to $B$. On the other hand, from the plot
of the function $\theta$ in the
right panel of Figure~\ref{fig-ex2-2}, we can realize that, unlike the simple
pathway shown in Figure~\ref{fig-ex2-1}, the first
passage paths from $A$ to $B$ taken by the walker are very diffusive on average, since many
cells have been repeatedly visited within a single first passage path. The
diffusiveness of the paths can also be observed from the very long average path lengths in
Table~\ref{tab-ex234-1}.

In order to further investigate the nonreactive and reactive segments of the
first passage paths, functions $\bar{\theta}'$, $\widetilde{\theta}$ are
computed and are shown in Figure~\ref{fig-ex2-3}. We can see that
both of these two segments are diffusive on average, since many cells are repeatedly visited.
Comparing these two segments, however, it is observed
that a large number of the visits at cells in the left part of the maze
actually belong to the nonreactive segments and do not contribute to the reactive pathways.
Comparing the two panels in Figure~\ref{fig-ex2-3}, we can also conclude that
the nonreactive segments are typically more diffusive than the reactive segments.
Roughly speaking, this phenomena occurs similarly when studying the
transitions of a metastable diffusion process between two local minima of a potential,
where the system spends longer time in the basin of attraction than in the transition region.
\begin{table}
  \centering
  \begin{tabular}{c|c|c|c}
    \hline
    \hline
    & $\bar{L}$ & $\widetilde{L}$ & $L$ \\
    \hline
    Maze & $69704.7$ & $14129.3$ & $83834.0$\\
    Network & $18.01$ & $36.83$ & $54.84$ \\
    Football &  $5.19$ & $3.47$ & $8.65$ \\
    \hline
  \end{tabular}
  \caption{The mean path lengths of the nonreactive segments
    ($\bar{L}$), the reactive segments ($\widetilde{L}$) and the whole first
  passage path ($L$) for the maze, the scale-free network, as well as the football
match examples. \label{tab-ex234-1}}
\end{table}
\begin{figure}
\centering
    \includegraphics[width=0.80\textwidth]{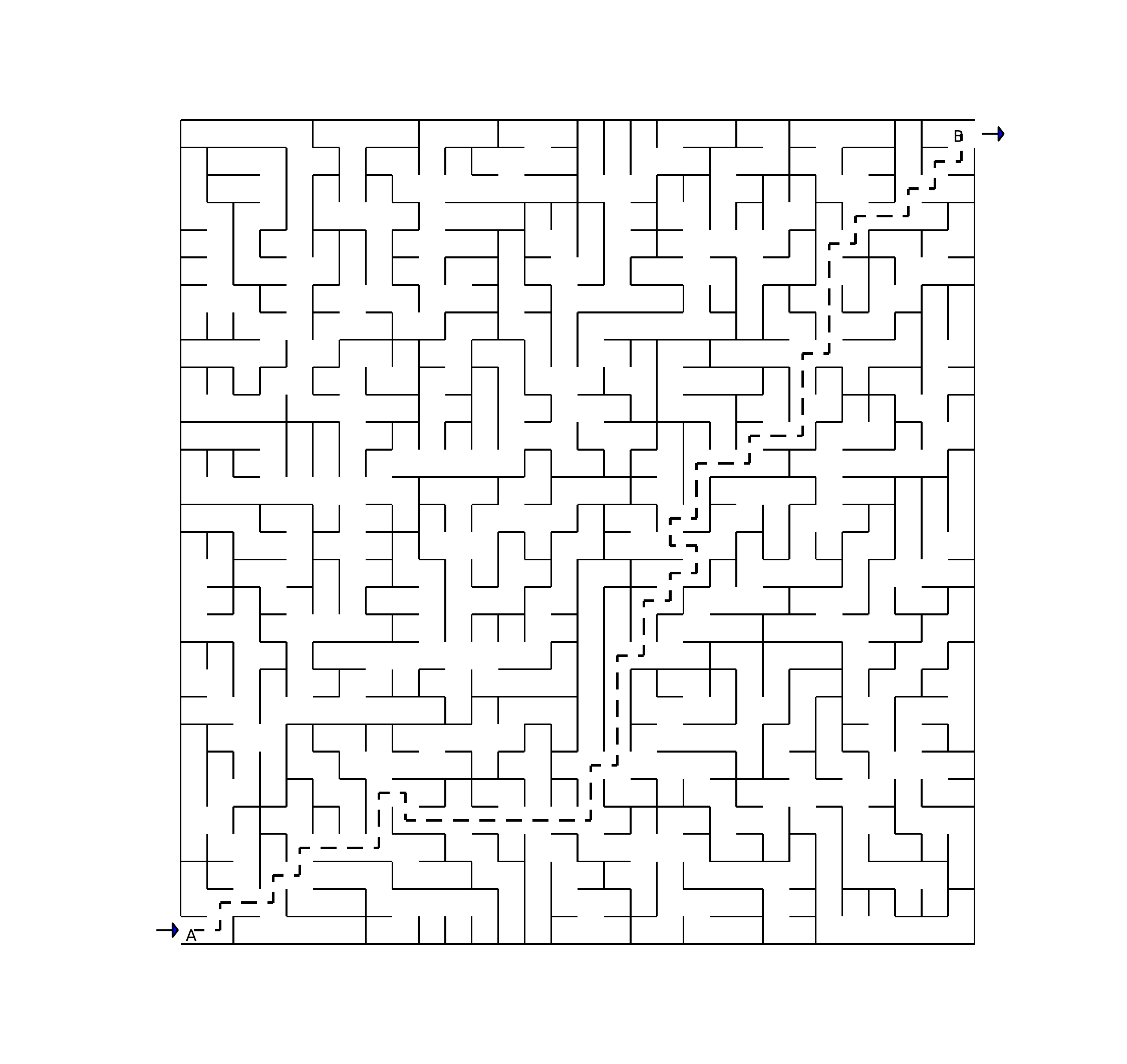}
\caption{Example $2$. Cells
  at the bottom left corner and the upper right corner correspond to set $A$ and $B$,
  respectively. One possible pathway from $A$ to $B$ is shown. \label{fig-ex2-1}}
\end{figure}
\begin{figure}
\centering
    \includegraphics[width=0.85\textwidth]{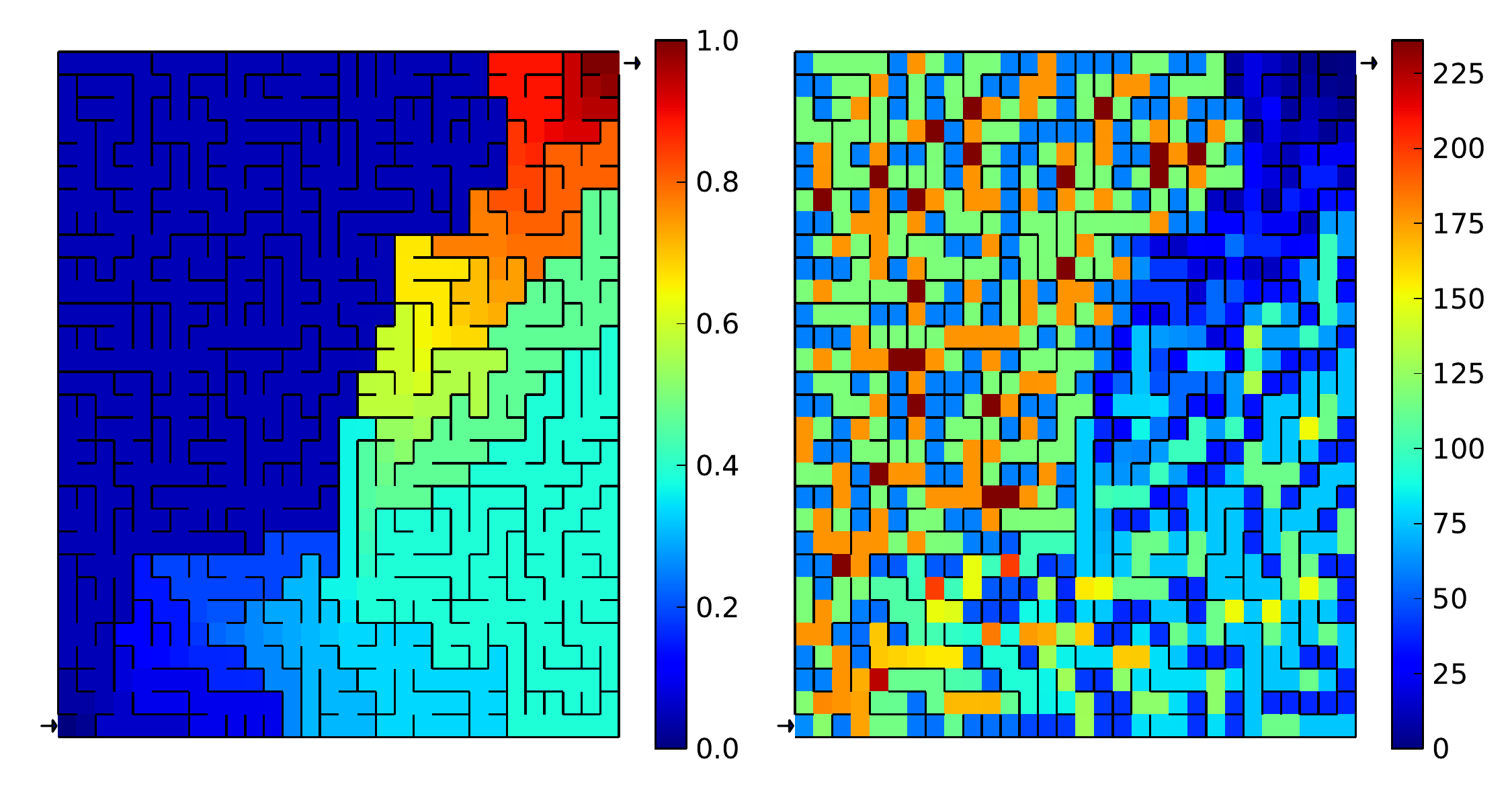}
\caption{Example $2$. Left : values
of committor function $q$ for each cell are shown.
Right : values of function $\theta$ (average number of times that a cell has
been visited by the first passage paths) are shown.
\label{fig-ex2-2}}
\end{figure}
\begin{figure}
\centering
    \includegraphics[width=0.85\textwidth]{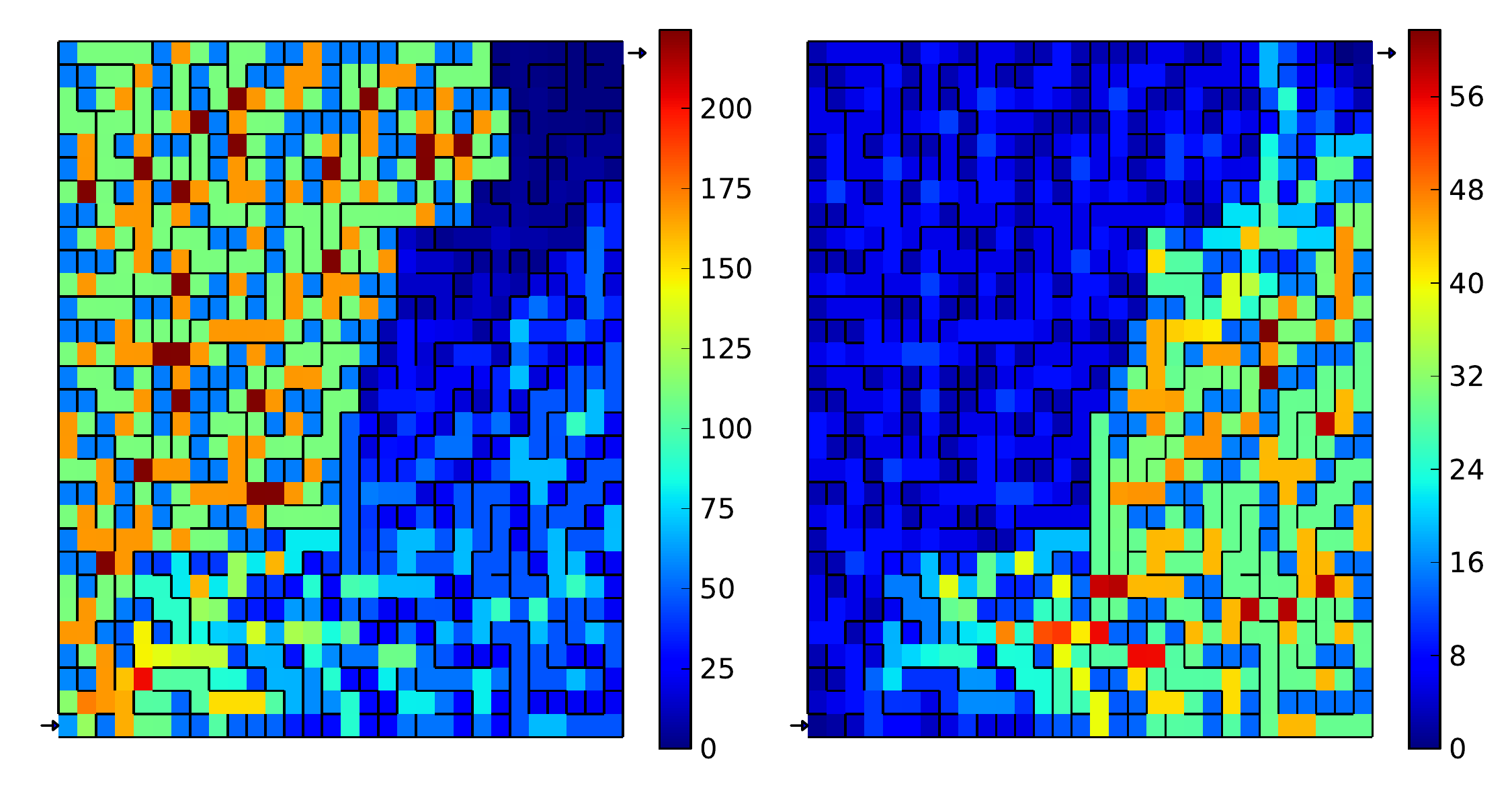}
    \caption{Example $2$. Left : values of function $\bar{\theta}'$ 
      (average number of times that a nonreactive trajectory visits a node)
      corresponding to the nonreactive segments.  
    Right : values of function $\widetilde{\theta}$ 
(average number of times that a reactive trajectory visits a node)
    corresponding to the reactive
  segments. 
\label{fig-ex2-3}}
\end{figure}
\subsection{Example $3$ : discrete-time random walk on a scale-free network}
\label{subsec-network}
In this example, we consider a network with a power law degree distribution.
The network is generated using Python package NetworkX~\cite{ref_networkx}, which
implemented the algorithm of Holme and Kim~\cite{PhysRevE_growing_network} to generate growing graphs with power law
degree distribution. In order to have a better illustration, we choose a
relatively small network with $50$ nodes. During the growth of the network,
two random edges are added for each new node
and the probability of adding a triangle after adding a random edge is $0.1$~\cite{ref_networkx}. The structure of the
network is shown in Figure~\ref{fig-ex3-1}.

To continue, we take the discrete-time random walk on this network as an example and study its first
passage paths starting from node $49$ to node $0$, i.e. $A=\{49\}, B=\{0\}$.
Various statistics are computed following
Algorithm~\ref{algo-1}, as well as the methods presented in Section~\ref{sec-algo}.
In Table~\ref{tab-ex3-1}, nodes are ranked according to different statistics
and part of them are listed. It can be observed that, except for the source node $49$, the committor
values $q$ of all other nodes are larger than $0.5$, reflecting the fact that
these nodes are more tightly connected to node $0$ compared to node $49$,
which is the last node added during the growth of the network.
We can also conclude that nodes with large values $\theta$ tend to
have large values of $\bar{\theta}$ (or $\bar{\theta}'$) and
$\widetilde{\theta}$ as well. Furthermore, a positive
correlation can be observed between the magnitudes
of these statistics and the degrees of nodes, which probably could be explained
by the undirected (symmetrical) movement of the random walker.
In Figure~\ref{fig-ex3-1}, information related to the reactive trajectories
are shown. Specifically, nodes with larger values $q$ are indicated by brighter
colors, and nodes with larger values $\widetilde{\theta}$ are displayed in
larger sizes. Also, edges with larger reactive fluxes $\widetilde{J}$ are plotted
using thicker lines. From Figure~\ref{fig-ex3-1}, we could identify nodes and
edges that are more frequently visited by reactive trajectories.
The average path lengths of the nonreactive trajectories, the reactive
trajectories, as well as the whole first passage paths are shown in
Table~\ref{tab-ex234-1}.
Different from the previous maze example, the lengths of the reactive
trajectories are longer than those of the nonreactive trajectories on average,
which is due to the fact that the attraction of the source set $A$ is not strong.

Summarizing the above results,  we can conclude that, due to the global connectivity of the network,
the path ensembles are widespread in path space and it is difficult
to identify certain dominant transition pathways for this system. But still, we have obtained
insights about the transitions of the random walk both quantitatively and graphically.
\begin{figure}[hptb]
\centering
    \includegraphics[width=14cm]{./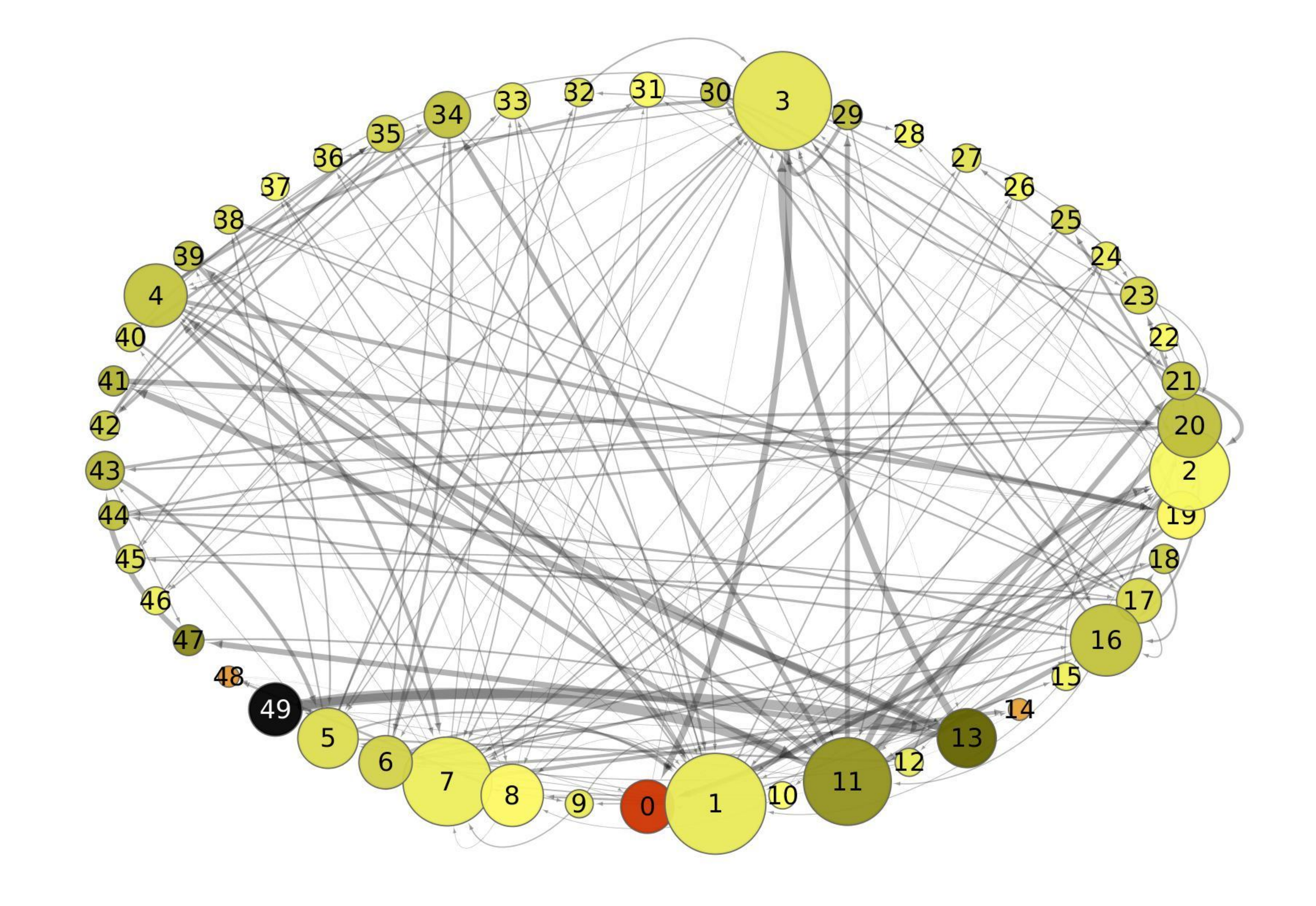}
    \caption{Example $3$. The structure of the scale-free network with $50$ nodes,
      as well as the information of the reactive trajectories from node $49$
      (black) to node $0$ (red) are displayed. The colors of the nodes depend on the committor value
    $q$ and brighter colors indicate that the values $q$ are larger. Nodes with
  relatively larger values $\widetilde{\theta}$ are shown in larger sizes.
Edges with relatively larger fluxes $\widetilde{J}$ are shown with
thicker lines. Also see Table~\ref{tab-ex3-1} for quantitative results. \label{fig-ex3-1}}
\end{figure}
\begin{table}[h]
\centering
  \begin{tabular}{cccccccccccc}
    \hline
    \hline
    Node & $49$ & $48$ & $14$ & $10$ & $22$ & $\cdots$ & $2$ & $11$ & $7$ & $3$ & $1$ \\
     Degree & $2$ & $2$ & $2$ & $2$ & $2$ & $\cdots$ & $9$ & $9$ & $11$ & $14$ & $15$ \\
    \hline
    \hline
    Node & $49$ & $13$ & $47$ & $11$ & $41$ & $\cdots$ & $8$ & $19$ & $14$ & $48$ & $0$ \\
     $q$ & $0.0$ & $0.51$ & $0.57$ & $0.59$ & $0.63$ & $\cdots$ & $0.74$ & $0.75$ & $0.86$ & $0.87$ & $1.0$ \\
    \hline
    \hline
    Node & $0$ & $48$ & $14$ & $10$ & $22$ & $\cdots$ & $7$ & $13$ & $3$ & $1$ & $11$ \\
    $\bar{\theta}'$ & $0.00$ & $0.03$ & $0.04$ & $0.13$ & $0.14$ & $\cdots$ &
			   $0.83$ & $1.09$ & $1.14$ & $1.18$ & $1.40$ \\
    \hline
    \hline
    Node & $48$ & $14$ & $10$ & $22$ & $37$ & $\cdots$ & $2$ & $11$ & $7$ & $3$ & $1$ \\
    $\widetilde{\theta}$ & $0.20$ & $0.22$ & $0.36$ & $0.36$ & $0.36$ &
			      $\cdots$ & $1.63$ & $1.99$ & $2.05$ & $2.67$ & $2.83$ \\
    \hline
    \hline
    Node & $48$ & $14$ & $10$ & $22$ & $37$ & $\cdots$ & $2$ & $7$ & $11$ & $3$ & $1$ \\
$\theta$ & $0.23$ & $0.26$ & $0.48$ & $0.50$ & $0.50$ & $\cdots$ & $2.24$ & $2.88$ & $3.39$ & $3.81$ & $4.01$ \\
    \hline
    \hline
    \end{tabular}
    \caption{Example $3$. Various statistics related to the first passage
    paths of the random work from node $49$ to node $0$.
    The first and the last five nodes of the node lists which are sorted according to
    values of each statistical quantity are shown. The network is displayed in
    Figure~\ref{fig-ex3-1}. Row with label ``Degree'' shows the degrees of the
  corresponding nodes. Definitions of other quantities can be found in
Section~\ref{sec-reactive} and Table~\ref{tab-notation}.\label{tab-ex3-1}}
\end{table}
\subsection{Example $4$ : football match}
\label{subsec-football}
In the last example, we apply the analysis in Section~\ref{sec-reactive} to
study the football match between Germany and Argentina in the $2014$ world cup
final~\cite{2014final}. The main aim is to quantify the performance of the players according to the ball passes during the match.
For simplicity, we only focus on the German national team and model the
ball passes between players as a discrete-time Markov jump process.

We take the data-based approach as discussed in Section~\ref{sec-algo}.
First, all passes between players of the Germany
national team during the entire game ($90$ minutes plus $30$ minutes extra
time) were recorded~\cite{youtube_videos}.
These passes constitute $107$ trajectories, each of which describes
consecutive ball passes among German players when the team possessed the ball.
Since each trajectory may start or finish in different ways, besides the $11$
starters and $2$ substitutes who participated in the game~(see Table~\ref{tab-ex4-1}), $6$ additional
nodes are introduced in the state set as outlined below.
Specifically, a trajectory starts from node $1$ if the goalkeeper
starts with a kick, and it starts from either node S$0$ or node S$1$ when the
Germany team got the possession from the opponent (or via a throw-in), depending on
whether it occurred in the defensive Half or attacking Half. Similarly, for the end
state, node L$0$ and L$1$ indicate that the German team lost possession of the
ball in the defensive Half and attacking Half, respectively.
Furthermore, both nodes B$0$ and B$1$ mark that the attack finished in the
opponent's penalty area, while the latter indicates a shot on goal was made.

We estimate the transition probabilities $p$ and the initial distribution $\mu$ of the graph using formulas
(\ref{kappa-p-estimator}), (\ref{init-mu-estimator}) in Section~\ref{sec-algo}. Besides the three initial nodes S$0$, S$1$ and $1$
(Neuer), we also select nodes $4$, $5$ and $20$ into the set $A$, since they
correspond to the three full-back players (H\"{o}wedes, Hummels, Boateng) who
stayed in the defensive Half most of the time during the match.
Therefore, we have sets $A=\{\mbox{S}0\,, \mbox{S}1\,, 1\,, 4\,, 5\,, 20\}$ and the initial distribution for nodes $\{$S$0$, S$1$, $1$,
$4$, $5$, $20$$\}$ in set $A$ is $\mu = \{0.579, 0.252, 0.168, 0, 0, 0\}$.
Set $B$ is chosen to contain all end nodes of the trajectories, i.e. $B =
\{\mbox{L}0\,, \mbox{L}1\,, \mbox{B}0\,, \mbox{B}1\}$. This choice of set $B$
allows us to utilize all the trajectories and the results can provide us an
overall insights of this specific match, i.e. not only shots on goal but also 
attacks which were not successful.

After these preparations, we apply Algorithm~\ref{algo-1} to compute the
various statistics defined in Section~\ref{sec-reactive} and results are
shown in Figure~\ref{fig-ex4-1} and Figure~\ref{fig-ex4-2}.
As already mentioned in Section~\ref{sec-algo}, function
$\theta$, flux $J$ and the average total length of the first passage paths computed
from Algorithm~\ref{algo-1} are identical to those computed from
estimators in (\ref{right-estimator}). In Figure~\ref{fig-ex4-1}, nodes and
edges are displayed in different sizes and thickness according to the values of
function $\theta$ and flux $J$. It could be observed that the four
players Schweinsteiger, Kroos, Lahm, \"{O}zil had much possession of the ball and contributed
more ball passes compared to the other players.
In Figure~\ref{fig-ex4-2}, on the other hand, the sizes of nodes and the thickness
of edges are determined according to their values of function
$\widetilde{\theta}$ and $\widetilde{J}$, respectively, which are related to the reactive
trajectory ensemble. We see that passes among the goalkeeper and the full-back players
(Neuer, H\"{o}wedes, Hummels, Boateng) are filtered out. Furthermore, upon closer
examination of the edges, we can observe that the role of the attackers
(e.g. \"{O}zil, M\"{u}ller) become more prominent. Quantitative results
are presented in Table~\ref{tab-ex4-2}.

\begin{table}[htpb]
  \centering
  \begin{tabular}{ll|ll}
    \hline
    \hline
    No. & Name & No. & Name \\
    \hline
    $1$ & Neuer (GK) & $16$ & Lahm (C) \\
    $4$ & H\"{o}wedes &  $18$ & Kroos\\
    $5$ & Hummels & $20$ & Boateng\\
    $7$ & Schweinsteiger & $23$ & Kramer ($\downarrow 31'$) \\
    $8$ & \"{O}zil ($\downarrow 120'$) & $9$ & Sch\"{u}rrle ($\uparrow 31'$)\\
    $11$ & Klose ($\downarrow 88'$) & $17$ & Mertesacker ($\uparrow 120'$)\\
    $13$ & M\"{u}ller &  $19$ & G\"{o}tze ($\uparrow 88'$)  \\
    \hline
  \end{tabular}
  \caption{German national team players in the FIFA world cup $2014$ final.
    Information of the substitutions are indicated in the brackets. Notice that
    player No.$17$ (Mertesacker) does not appear in the graph because he did
    not contribute to the passes in the trajectories.
  \label{tab-ex4-1}}
\end{table}
\begin{table}[htbp]
  \centering
  \begin{tabular}{c|c|c|c|c|c|c|c|c|c}
    \hline
    \hline
    Node & $q$ & $\bar{\theta}'$ & $\widetilde{\theta}$ & $\theta$ & Node & $q$ & $\bar{\theta}'$ & $\widetilde{\theta}$ & $\theta$\\
    \hline
    $1$ & $0.00$ & $0.25$ & $0.10$ & $0.35$ & $16$ & $0.40$ & $0.64$ & $0.43$ & $1.07$ \\
    $4$ & $0.00$ & $0.34$ & $0.29$ & $0.63$ & $18$ & $0.41$ & $0.62$ & $0.43$ & $1.05$ \\
    $5$ & $0.00$ & $0.39$ & $0.14$ & $0.53$ & $20$ & $0.00$ & $0.46$ & $0.27$ & $0.73$ \\
    $7$ & $0.36$ & $0.63$ & $0.36$ & $0.99$ & $23$ & $0.48$ & $0.05$ & $0.05$ & $0.09$ \\
    $8$ & $0.47$ & $0.41$ & $0.36$ & $0.77$ & $9$ & $0.45$ & $0.34$ & $0.28$ & $0.62$ \\
    $11$ & $0.68$ & $0.07$ & $0.15$ & $0.22$ & $19$ & $0.47$ & $0.08$ & $0.08$ & $0.16$ \\
    $13$ & $0.55$ & $0.28$ & $0.35$ & $0.63$ & B$0$ & $1.00$ & $0.00$ & $0.24$ & $0.24$ \\
		       & & & & & B$1$ & $1.00$ & $0.00$ & $0.06$ & $0.06$ \\
    \hline
  \end{tabular}
  \caption{Example $4$. Various statistics related to the first passage paths
  in the football match. Definitions of these quantities can be found in
Section~\ref{sec-reactive} and Table~\ref{tab-notation}. See Figure~\ref{fig-ex4-1} and Figure~\ref{fig-ex4-2} for a depiction of the network. \label{tab-ex4-2}}
\end{table}
\begin{figure}[hptb]
\centering
\includegraphics[width=14cm]{./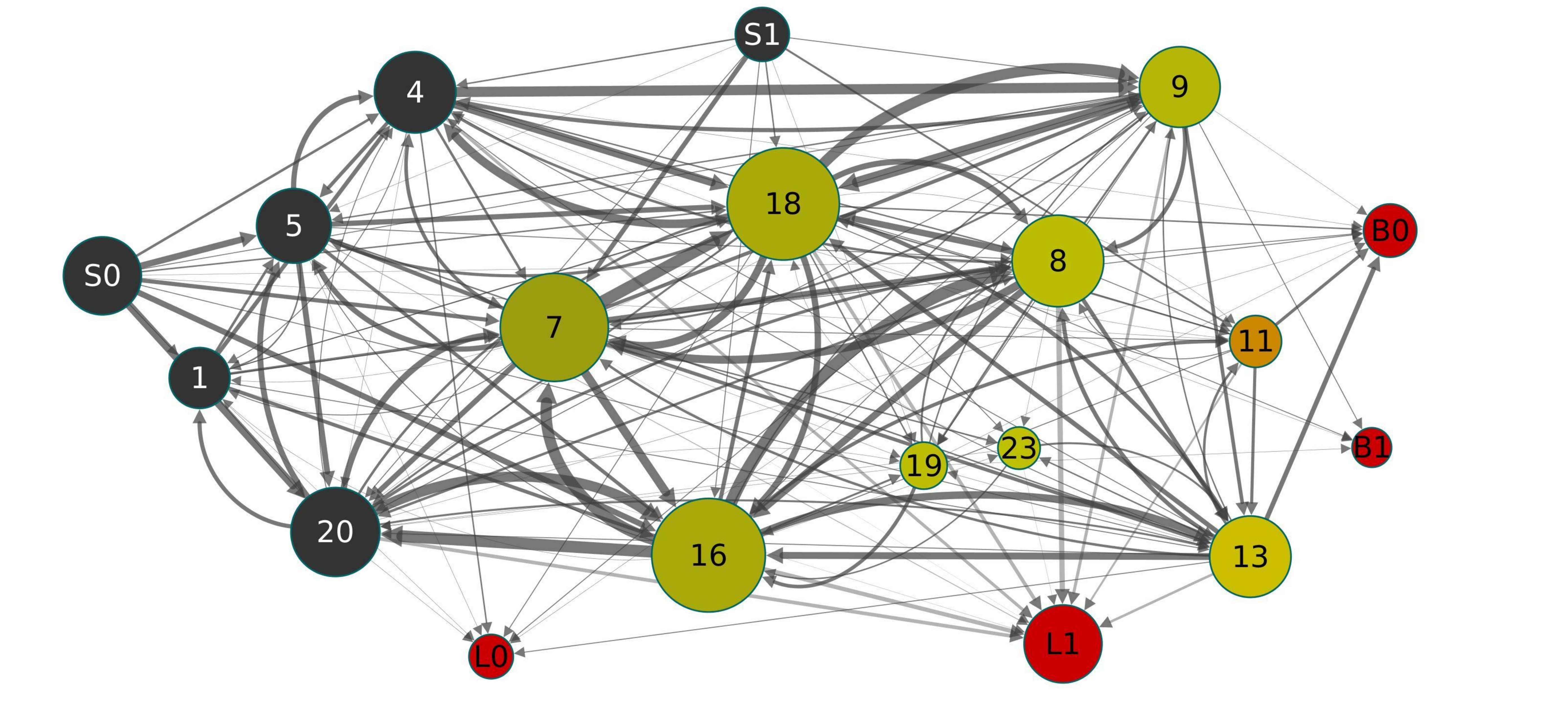}
    \caption{Example $4$. The graph modelling of the football match and the
    information related to the first passage path ensemble. Nodes S$0$, S$1$
  and L$0$, L$1$, B$0$, B$1$ are introduced to mark the start and end status of the
first passage path (consecutive passes). The other nodes with digital labels correspond to
the players listed in Table~\ref{tab-ex4-1}. Nodes in set $A$ and $B$ are
plotted in black and red, respectively. Nodes with larger values of $\theta$ 
(number of times the node is visited on first passage path) are
shown in larger sizes. Edges with relatively larger values of fluxes $J$ are
shown in thicker lines. Colors indicate the values of the committor function
([low] black-green-red [high]).
\label{fig-ex4-1}}
\end{figure}
\begin{figure}[hptb]
\centering
\includegraphics[width=14cm]{./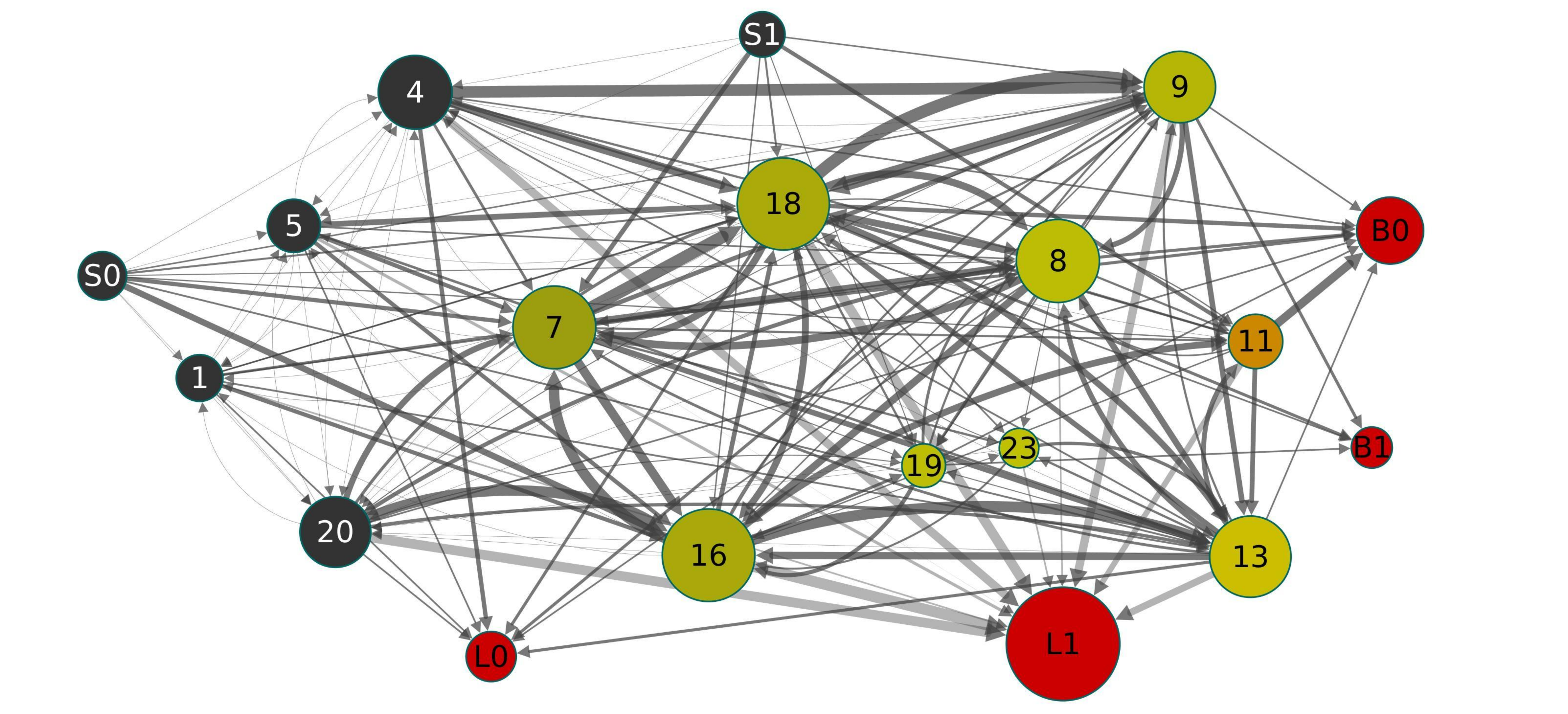}
\caption{Example $4$. The graph modelling of the football match and the
    information related to the reactive path ensemble. Different from
    Figure~\ref{fig-ex4-1}, in this figure, nodes with larger values of
    $\widetilde{\theta}$ (number of times that the node is visited by a reactive trajectory)
    are
shown in larger sizes. Edges with relatively larger values of reactive fluxes
$\widetilde{J}$ are shown in thicker lines. 
Colors indicate the values of the committor function ([low] black-green-red [high]). \label{fig-ex4-2}}
\end{figure}
\section{Conclusions}
\label{sec-conclusion}
In the present work, we have developed a theory for the statistics of the first
passage path ensemble of jump processes on a finite state space. The main
approach is to divide a first passage path into nonreactive and reactive segments
so that the behaviors of the processes within each segment can be studied separately.
Furthermore, relations between the statistics of these two segments
and the statistics of the entire first passage paths are
obtained.

Our analysis can be applied to jump processes which are non-ergodic, as well as continuous-time jump processes where the waiting time
distributions are non-exponential. More generally, second-order or higher
order Markov chains have been studied where the dynamics
depend not only on system's current state but also on the states which have
been visited in the previous
steps~\cite{Rosvall2014,Scholtes2014,mem_dectect2014}. These types of jump
processes can be converted to Markov jump processes by extending the state space and therefore it is possible
to apply the analysis in the current work to study these higher order Markov
chains (although the size of the extended state space may become quite large and brings computational difficulties). 

We expect that the study of both the nonreactive and reactive segments can
help to understand the transition behavior of processes from one subset to
another. Especially, analysis of the reactive segments in this work is closely related
to TPT, which was developed for both diffusion processes and jump processes under the
assumption of ergodicity. Some illustrative examples are studied numerically in order to demonstrate the
applicability of the theory. More generally, the study of the transition events, or the first passage
phenomena~\cite{metzler2014first},
plays an important role in order to understand many real-world systems and applications, e.g. in molecular dynamics or in epidemiology.
Applying the analysis of the current paper to study phenomena in the aforementioned subject areas will be considered in future work.

\section*{Acknowledgement}
This research has been funded by Deutsche
Forschungsgemeinschaft (DFG) through grant CRC 1114, by the Einstein
Foundation Berlin through project CH4 of Einstein Center for Mathematics
(ECMath) and though the BMBF, grant number 031A307.  
\appendix
\section{An alternative expression of $\mu_r$}
\label{appsec-1}
    Here we provide an alternative way of studying the probability
    distribution $\mu_r$ defined in (\ref{mu-r}) in Section~\ref{sec-reactive}. Consider the
    first passage paths starting from $x \in
	B^c$, and for each $y \in A$, let $\omega(x,y)$ be the probability that $y$ is the
	last hitting node in $A$, i.e.
	$\omega(x,y) = \mathbb{P}(x_{\sigma_x}=y)$. The equation of $\omega$ can be
	obtained by considering the next state $z$ which the system will jump to from $x$.
	In the case $x \neq y$, we obtain the relation
	\begin{align}
	  \omega(x,y) =& \sum_{z \in B^c} p(z\,|\,x)\,\omega(z,y), \quad x \neq
	  y\,, y \in A\,.
	  \label{pi-1}
	\end{align}
	Notice that for $x \not\in V^-$, we have $\omega(x,y) = 0$ and (\ref{pi-1}) still holds.

	In the case $x=y \in A$, after jumping to another state $z$,
	there are two possibilities depending on whether the system will
	return to set $A$ again before it reaches set $B$. Notice that such
	events are characterized exactly by $\{\tau_{A,z} < \tau_{B,z}\}$ and its
	complement. Using the committor function $q$ in
	(\ref{committor-q}), (\ref{committor}), we obtain
	\begin{align}
	  \omega(y,y) =& \sum_{z \in B^c} p(z\,|\,y)\,\omega(z,y) + \sum_{z \in V}
	p(z\,|\,y) q(z)\,.
	  \label{pi-2}
	\end{align}
	For each $y \in A$, we can solve $\omega(\cdot,y)$ from (\ref{pi-1})-(\ref{pi-2}).
	Since the initial distribution is $\mu$ in the first passage path ensemble, we
	obtain
	\begin{align}
	\mu_r(x) = \sum\limits_{y \in A} \mu(y) \omega(y,x), \quad x \in A\,.
      \end{align}
      \section{Path probabilities for a continuous-time jump process}
      \label{appsec-2}
In the following, starting from a continuous-time jump process, we study the
probability associated to its jump trajectories and
further clarify the connection between the continuous-time and discrete-time
(Markov) jump process defined by transition probabilities $p$ in
Subsection~\ref{subsec-continuous-to-discrete}.

Following the setup in Subsection~\ref{subsec-continuous-to-discrete}, let $\rho(t,x)$ and $w(t,x)$ denote the probabilities that the system stays at node $x$ at time $t$
and the system jumps to node $x$ (from another node) at time $t$,
respectively. They are related by
    \begin{align}
      \rho(t,x) = \int_{0}^t w(t-u,x) a(u\,|\,x)\, du\,,
      \label{eqn-p}
    \end{align}
  and we have $w(0,x) = \rho(0,x)$ at time $t = 0$. Considering the node $y$ from
  which the system jumps to $x$, we can obtain the equation
    \begin{align}
      w(t,x) = \sum_{y \in V,\, y\rightarrow x} \int_0^t w(t-s,y)
      b(s\,,y\rightarrow x) ds + w(0,x) \delta(t) \,.
      \label{eqn-q}
    \end{align}
    Expressions of $\rho(t,x), w(t,x)$ have been obtained in
    \cite{non_poisson_jump} by applying
    Laplace transformation in (\ref{eqn-p}) and (\ref{eqn-q}).

    A trajectory of the continuous-time jump process can be represented as a path with
    time-stamps
    \begin{align}
    \varphi=\big\{(t_0, x_0), (t_1, x_1), \cdots, (t_N, x_N),\,\cdots \big\},
    \label{path-with-time}
  \end{align}
    where $t_0=0$ and $0\le t_1 < \cdots < t_N$ are the jump time,
    $x_k \in V$, $0 \le k \le N$. It corresponds to the event that the
    process starts from state $x_0$ at time $t_0=0$ and later on jumps
    consecutively from node $x_k$ to $x_{k+1}$ at time $t_{k+1}$, $0 \le k \le N-1$.
    The probability density that this specific path $\varphi$ occurs is
    \begin{align}
      \rho(0,x_0) \prod_{k=0}^{N-1} b\big(t_{k+1} - t_k\,,x_k\rightarrow
      x_{k+1}\big)\,.
    \end{align}
    Now we consider the path $\varphi$ when the sequence of nodes $(x_0, x_1,
    \cdots, x_N)$ is fixed and the jump times are allowed to vary.
 For $1 \le k \le N$, we denote the probability
    density that the system arrives at node $x_k$
    at time $t$ along the sequence of nodes $(x_0, x_1,
    \cdots, x_k)$ by $r_k(t)$. Also, let $\eta(t)$ be the probability that the system arrives
    at state $x_N$ along the path $(x_0, x_1, \cdots, x_N)$ and remains there
    until time $t$. Notice that we have omitted the dependence on this specific node sequence in the
    notations of $r_k(\cdot)$ and $\eta(\cdot)$ for simplicity.
    Setting $b_k(\cdot)=b(\cdot\,,x_k \rightarrow x_{k+1})$ for $0 \le k < N$,
    then clearly we have $r_1(t) = b_0(t)$ and
    \begin{align}
      r_k(t) = \int_0^t du_1 \int_{u_1}^t du_2 \cdots
      \int_{u_{k-2}}^t
	du_{k-1}
	\Big[\prod_{l=0}^{k-2} b_{l}(u_{l+1} - u_l)\Big]
	b_{k-1}(t-u_{k-1})\,,
	\label{r-k-formula}
    \end{align}
    for $2 \le k \le N$, and also $\eta(t) = \int_{0}^t r_N(u)\, a(t-u\,|\,x_{N})\, du$.
    From (\ref{r-k-formula}), it is direct to verify that
    \begin{align}
    r_k(t) = \int_0^t r_{k-1}(u) b_{k-1}(t-u) du\,, \quad  2 \le k \le N\,,
    \label{r-k-recursive}
  \end{align}
  and therefore we can obtain
    \begin{align}
      \begin{split}
	\hat{r}_k(s) =& \hat{r}_{k-1}(s)\, \hat{b}_{k-1}(s)\,, \quad 2 \le k \le
      N\,, \\
      \hat{\eta}(s) =& \hat{r}_N(s)\, \hat{a}(s\,|\,x_N)\,,
    \end{split}
      \label{laplace-rel}
    \end{align}
    where $\hat{r}_k$, $\hat{\eta}$, $\hat{b}_k$, $\hat{a}$ denote the Laplace transformations of
    functions $r_k, \eta$, $b_k$ and $a(\cdot\,|\,x_{N})$, respectively.
    As a result, we obtain the expressions
    \begin{align}
	\hat{r}_k(s) = \prod\limits_{l=0}^{k-1} \hat{b}_l(s) \,,
	\quad
	\hat{\eta}(s) = \hat{a}(s\,|\,x_N)\prod\limits_{l=0}^{N-1} \hat{b}_l(s) \,,
	\label{r-eta-formula}
    \end{align}
    where $1 \le k \le N$.
  When the probability densities $\psi$ of the waiting times are exponential distributions, (\ref{r-eta-formula}) can be further simplified and the expression of
	$\eta(t)$ has been obtained in \cite{path_summation}. Also see
	\cite{path_sampling_sun} for a sampling algorithm based on these
	quantities.

    From (\ref{r-k-formula}) and (\ref{p-b-formula}), we can also compute the
    probability that the system arrives at $x_N$ along
    the specific path $x_0, x_1, \cdots, x_N$ (regardless of the jump times) and obtain
    \begin{align}
      \rho(0,x_0) \int_{0}^{+\infty} r_N(t)\,dt = \rho(0,x_0) \prod_{k=0}^{N-1}
      p(x_{k+1}\,|\, x_k)\,.
    \end{align}
    The right hand side of the expression above indicates that
    we can focus on the discrete-time Markov jump process on $G$ defined by the transition
    probability matrix $P$, whose entries are $P_{xy} = p(y\,|\,x)$, if we are
    interested in the jump paths irrespective of the time information.

\bibliographystyle{siam}
\bibliography{reference}

\begin{thebibliography}{10}

\bibitem{2014final}
{\em {2014 FIFA World Cup Final}}.
\newblock
  \url{http://www.fifa.com/worldcup/matches/round=255959/match=300186501/},
  2014.

\bibitem{youtube_videos}
{\em {Youtube videos of 2014 FIFA World Cup Final}}.
\newblock \url{https://www.youtube.com/watch?v=rTARH0RWDy8} and
  \url{https://www.youtube.com/watch?v=U1O4wvznKr0}, 2014.

\bibitem{petsc-web-page}
{\sc S.~Balay, S.~Abhyankar, M.~F. Adams, J.~Brown, P.~Brune, K.~Buschelman,
  L.~Dalcin, V.~Eijkhout, W.~D. Gropp, D.~Kaushik, M.~G. Knepley, L.~C.
  McInnes, K.~Rupp, B.~F. Smith, S.~Zampini, H.~Zhang, and H.~Zhang}, {\em
  {PETS}c {W}eb page}.
\newblock \url{http://www.mcs.anl.gov/petsc}, 2016.

\bibitem{Balcan2009}
{\sc D.~Balcan, V.~Colizza, B.~Gon{\c{c}}alves, H.~Hu, J.~J Ramasco, and
  A.~Vespignani}, {\em Multiscale mobility networks and the spatial spreading
  of infectious diseases}, Proc. Natl. Acad. Sci. U.S.A., 106 (2009),
  pp.~21484--21489.

\bibitem{Cameron2014}
{\sc M.~Cameron and E.~Vanden-Eijnden}, {\em Flows in complex networks: Theory,
  algorithms, and application to {Lennard--Jones} cluster rearrangement}, J.
  Stat. Phys., 156 (2014), pp.~427--454.

\bibitem{first-passage-time-nature}
{\sc S.~Condamin, O.~B\'{e}nichou, V.~Tejedor, R.~Voituriez, and J.~Klafter},
  {\em {First-passage times in complex scale-invariant media}}, Nature, 450
  (2007), pp.~77--80.

\bibitem{durrett_prob_example}
{\sc R.~Durrett}, {\em Probability: Theory and Examples}, Cambridge Series in
  Statistical and Probabilistic Mathematics, Cambridge University Press, 2010.

\bibitem{towards_tpt2006}
{\sc W.~E and E.~Vanden-Eijnden}, {\em Towards a theory of transition paths},
  J. Stat. Phys., 123 (2006), pp.~503--523.

\bibitem{tpt2010}
\leavevmode\vrule height 2pt depth -1.6pt width 23pt, {\em Transition-path
  theory and path-finding algorithms for the study of rare events}, Annu. Rev.
  Phys. Chem., 61 (2010), pp.~391--420.

\bibitem{burst_mem}
{\sc K.-I. Goh and A.-L. Barab{\'a}si}, {\em Burstiness and memory in complex
  systems}, EPL (Europhysics Letters), 81 (2008), p.~48002.

\bibitem{ref_networkx}
{\sc A.~A. Hagberg, D.~A. Schult, and P.~J. Swart}, {\em Exploring network
  structure, dynamics, and function using {NetworkX}}, in Proceedings of the
  7th Python in Science Conference, G.~Varoquaux, T.~Vaught, and J.~Millman,
  eds., Pasadena, CA USA, 2008, pp.~11--15.

\bibitem{path_sampling_sun}
{\sc B.~Harland and S.~X. Sun}, {\em Path ensembles and path sampling in
  nonequilibrium stochastic systems}, J. Chem. Phys., 127 (2007), p.~104103.

\bibitem{non_poisson_jump}
{\sc T.~Hoffmann, M.~A. Porter, and R.~Lambiotte}, {\em Generalized master
  equations for {non-Poisson} dynamics on networks}, Phys. Rev. E, 86 (2012),
  p.~046102.

\bibitem{PhysRevE_growing_network}
{\sc P.~Holme and B.~J. Kim}, {\em Growing scale-free networks with tunable
  clustering}, Phys. Rev. E, 65 (2002), p.~026107.

\bibitem{huisinga2004}
{\sc W.~Huisinga, S.~Meyn, and Ch. Sch{\"u}tte}, {\em Phase transitions and
  metastability in markovian and molecular systems}, Ann. Appl. Probab., 14
  (2004), pp.~419--458.

\bibitem{prl_fpt_heterogeneous}
{\sc S.~Hwang, D.-S. Lee, and B.~Kahng}, {\em First passage time for random
  walks in heterogeneous networks}, Phys. Rev. Lett., 109 (2012), p.~088701.

\bibitem{milestoning_network2013}
{\sc S.~M. Kreuzer, T.~J. Moon, and R.~Elber}, {\em Catch bond-like kinetics of
  helix cracking: Network analysis by molecular dynamics and milestoning}, J.
  Chem. Phys., 139 (2013), p.~121902.

\bibitem{mc-mixing}
{\sc D.~A. Levin, Y.~Peres, and E.~L. Wilmer}, {\em Markov chains and mixing
  times}, Providence, R.I. American Mathematical Society, 2009.
\newblock With a chapter on ``coupling from the past'' by J. G. Propp and D. B.
  Wilson.

\bibitem{Lu2014}
{\sc J.~Lu and J.~Nolen}, {\em Reactive trajectories and the transition path
  process}, Probab. Theory Relat. Fields, 161 (2014), pp.~195--244.

\bibitem{path_mem_carse_graining}
{\sc M.~Manhart, W.~Kion-Crosby, and A.~V. Morozov}, {\em Path statistics,
  memory, and coarse-graining of continuous-time random walks on networks}, J.
  Chem. Phys., 143 (2015), p.~214106.

\bibitem{metzler2014first}
{\sc R.~Metzler, G.~Oshanin, and S.~Redner}, {\em First-Passage Phenomena and
  Their Applications}, World Scientific Studies in International Economics,
  World Scientific Publishing Company, 2014.

\bibitem{estimation_Metzner2007}
{\sc P.~Metzner, E.~Dittmer, T.~Jahnke, and Ch. Sch{\"u}tte}, {\em Generator
  estimation of {Markov} jump processes}, J. Comput. Phys., 227 (2007),
  pp.~353--375.

\bibitem{tpt_jump}
{\sc P.~Metzner, Ch. Sch{\"{u}}tte, and E.~Vanden-Eijnden}, {\em Transition
  path theory for {Markov} jump processes}, Multiscale Model. Simul., 7 (2009),
  pp.~1192--1219.

\bibitem{norris-mc}
{\sc J.~R. Norris}, {\em Markov chains}, Cambridge series in statistical and
  probabilistic mathematics, Cambridge University Press, 1998.

\bibitem{reactionPath_mfpt}
{\sc S.~Park, M.~K. Sener, D.~Lu, and K.~Schulten}, {\em Reaction paths based
  on mean first-passage times}, J. Chem. Phys., 119 (2003), pp.~1313--1319.

\bibitem{Poletto2013}
{\sc C.~Poletto, M.~Tizzoni, and V.~Colizza}, {\em Human mobility and time
  spent at destination: impact on spatial epidemic spreading.}, J. Theor.
  Biol., 338 (2013), pp.~41--58.

\bibitem{msm_generation}
{\sc J.~H. Prinz, H.~Wu, M.~Sarich, B.~Keller, M.~Senne, M.~Held, J.~D.
  Chodera, Ch. Sch{\"u}tte, and F.~No{\'e}}, {\em Markov models of molecular
  kinetics: Generation and validation}, J. Chem. Phys., 134 (2011), p.~174105.

\bibitem{Rosvall2014}
{\sc M.~Rosvall, A.~V Esquivel, A.~Lancichinetti, J.~D West, and R.~Lambiotte},
  {\em Memory in network flows and its effects on spreading dynamics and
  community detection}, Nat. Commun., 5 (2014), p.~4630.

\bibitem{Scholtes2014}
{\sc I.~Scholtes, N.~Wider, R.~Pfitzner, A.~Garas, C.~J Tessone, and
  F.~Schweitzer}, {\em Causality-driven slow-down and speed-up of diffusion in
  {non-Markovian} temporal networks}, Nat. Commun., 5 (2014), p.~5024.

\bibitem{schuette2013metastability}
{\sc Ch. Sch{\"u}tte and M.~Sarich}, {\em Metastability and Markov State Models
  in Molecular Dynamics: Modeling, Analysis, Algorithmic Approaches:}, Courant
  Lecture Notes, Courant Institute of Mathematical Sciences, 2013.

\bibitem{mem_dectect2014}
{\sc P.~Singer, D.~Helic, B.~Taraghi, and M.~Strohmaier}, {\em Detecting memory
  and structure in human navigation patterns using {Markov} chain models of
  varying order}, PLOS ONE, 9 (2014), pp.~1--21.

\bibitem{steady_state_mean_recurrence2015}
{\sc L.~Speidel, R.~Lambiotte, K.~Aihara, and N.~Masuda}, {\em Steady state and
  mean recurrence time for random walks on stochastic temporal networks}, Phys.
  Rev. E, 91 (2015), p.~012806.

\bibitem{path_summation}
{\sc S.~X. Sun}, {\em Path summation formulation of the master equation}, Phys.
  Rev. Lett., 96 (2006), p.~210602.

\bibitem{tpt_eric2006}
{\sc E.~Vanden-Eijnden}, {\em Transition path theory}, in Computer Simulations
  in Condensed Matter Systems: From Materials to Chemical Biology Volume 1,
  M.~Ferrario, G.~Ciccotti, and K.~Binder, eds., Springer Berlin Heidelberg,
  Berlin, Heidelberg, 2006, pp.~453--493.

\bibitem{Vanden-Eijnden2014}
{\sc E.~Vanden-Eijnden}, {\em Transition path theory}, in An Introduction to
  Markov State Models and Their Application to Long Timescale Molecular
  Simulation, G.~R. Bowman, V.~S. Pande, and F.~No{\'e}, eds., Springer
  Netherlands, 2014, pp.~91--100.

\bibitem{analyzing_milestoning_network2013}
{\sc S.~Viswanath, S.~M. Kreuzer, A.~E. Cardenas, and R.~Elber}, {\em Analyzing
  milestoning networks for molecular kinetics: Definitions, algorithms, and
  examples}, J. Chem. Phys., 139 (2013), p.~174105.

\bibitem{Kleist2012}
{\sc M.~von Kleist, P.~Metzner, R.~Marquet, and Ch. Sch{\"u}tte}, {\em {HIV}-1
  polymerase inhibition by nucleoside analogs: cellular-and kinetic parameters
  of efficacy, susceptibility and resistance selection}, PLoS Comput. Biol., 8
  (2012), p.~e1002359.

\bibitem{fptp_chemical}
{\sc G.~H. Weiss}, {\em First Passage Time Problems in Chemical Physics}, John
  Wiley \& Sons, Inc., 2007, pp.~1--18.

\bibitem{Yousef2015}
{\sc K.~P. Yousef, A.~Streck, Ch. Sch{\"u}tte, H.~Siebert, R.~Hengge, and
  M.~von Kleist}, {\em Logical-continuous modelling of post-translationally
  regulated bistability of curli fiber expression in {Escherichia} coli.}, BMC
  Syst. Biol., 9 (2015), p.~39.

\end{thebibliography}
\end{document}